\newtheorem{thm}{Theorem}[section]
\newtheorem{defn}[thm]{Definition}
\newtheorem{lemma}[thm]{Lemma}
\newtheorem{cor}[thm]{Corollary}
\newtheorem{remark}[thm]{Remark}
\newtheorem{example}[thm]{Example}
\newcommand{\bmb}{\left( \begin{array}{rr}}
\newcommand{\enm}{\end{array}\right)}
\newcommand{\wQ}{\widehat Q}
\newcommand{\wbeta}{\widehat{\beta}}
\newcommand{\wv}{\widehat{v}}
\newcommand{\wrho}{\widehat{\rho}}
\newcommand{\of}{\overline{F}}
\newcommand{\oE}{\overline{E}}
\newcommand{\g}{{\mathfrak{g}}}
\newcommand{\h}{{\mathfrak{h}}}
\newcommand{\n}{{\mathfrak{n}}}
\newcommand{\half}{\frac12}
\newcommand{\bmu}{{\boldsymbol \mu}}
\newcommand{\bnu}{{\boldsymbol \nu}}
\newcommand{\C}{{\mathbb C}}
\newcommand{\Z}{{\mathbb Z}}
\newcommand{\Q}{{\mathbb Q}}
\newcommand{\R}{{\mathbb R}}
\newcommand{\N}{{\mathbb N}}
\newcommand{\bp}{{\mathbf p}}
\newcommand{\al}{{\alpha}}
\numberwithin{equation}{section}
\begin{document}

\title{A path model for Whittaker vectors}
\author{Philippe Di Francesco} 
\address{PDF: Department of Mathematics, University of Illinois MC-382, Urbana, IL 61821, U.S.A. e-mail: philippe@illinois.edu}
\author{Rinat Kedem}
\address{RK: Department of Mathematics, University of Illinois MC-382, Urbana, IL 61821, U.S.A. e-mail: rinat@illinois.edu}
\author{Bolor Turmunkh}
\address{BT: Department of Mathematics, University of Illinois MC-382, Urbana, IL 61821, U.S.A. e-mail: turmunk2@illinois.edu}
\date{\today}
\begin{abstract}
In this paper we construct weighted path models to compute Whittaker vectors in the completion of Verma modules, as well as Whittaker functions of fundamental type, for  all finite-dimensional simple Lie algebras, affine Lie algebras, and the quantum algebra $U_q(\mathfrak{sl}_{r+1})$. This leads to series expressions for the Whittaker functions. We show how this construction leads directly to the quantum Toda equations satisfied by these functions, and to the $q$-difference equations in the quantum case. We investigate the critical limit of affine Whittaker functions computed in this way.
\end{abstract}

\maketitle
\date{\today}

\section{Introduction}
Whittaker functions are fundamental objects in classical representation theory, which relate quantum integrable systems (the quantum Toda hierarchy and its generalizations) to Lie theory \cite{Kostant,KostantToda}. Whittaker functions have also been defined for affine, as well as quantum, algebras \cite{Etingof}, and satisfy corresponding q-difference equations. In addition to their representation theory connections,
these functions are important in the study of integrable systems, number theory, harmonic analysis, probability theory and algebraic geometry, and have been studied from these points of view by many authors (see for instance \cite{Lebedev,Lebedev3,Cherednik,Cherednik2,BUMP,Borodin,Oconnell,Givental,Lebedev2}). More recently,  analogues of Whittaker vectors 
for W-algebras called Gaiotto vectors have proved to be essential ingredients the AGT correspondence \cite{AGT}.

Eigenfunctions of the quantum Toda Hamiltonian are Whittaker functions corresponding to the classical Lie groups. They can be classified either according to their analytic properties or by choosing an appropriate class of representations through which they are obtained. The latter Whittaker functions can be defined using Whittaker vectors, which are elements in the (possibly completed) representation space.
The so-called {\it fundamental} Whittaker functions are obtained from irreducible Verma modules of the Lie algebra associated to the Lie group, whereas the so-called {\it class I} Whittaker functions are related to the principal series representations of the Lie group. This latter type of eigenfunctions can be obtained \cite{Hashizume} as linear combinations of fundamental solutions. 
In this paper, we concentrate on the construction of Whittaker vectors in Verma modules, 
and their corresponding fundamental Whittaker functions. So far, our methods do not apply to 
other types of representations such as principal series.

Recently, the notion of Whittaker vector has been generalized to the case of the 
$q$-deformed Virasoro or $W$-algebras,
where they are known as the Gaiotto vector \cite{AwataYamada}. The norm of this vector is the central object of the so-called 
AGT conjecture, relating
conformal blocks of Liouville field theory to the so-called Nekrasov partition functions,
defined as some instanton sums computing amplitudes of supersymmetric gauge theory. 

Although much is known on Whittaker functions, Whittaker vectors have proved surprisingly 
difficult to compute.
In this paper we present a general explicit construction of Whittaker vectors associated with Verma modules for simple and 
affine Lie algebras, as well as the quantum algebra $U_q(\mathfrak{sl}_{r+1})$.
We use a unified approach involving statistical sums over weighted paths. This can be considered a combinatorial representation-theory method, instead of an analytic one. (After writing this paper, we became aware of a similar idea contained in\cite{GoodWall} for the case of simple Lie algebras).

The method is as follows. By using the definition of the Whittaker vector in the irreducible highest weight Verma module, together with a choice of spanning set of the module indexed by directed paths on the positive root lattice, we find a choice of path weights, factorized into local contributions, such that the corresponding linear combination gives the Whittaker vector. The Whittaker function is then obtained as the matrix element of a generic Cartan subgroup element, between the Whittaker vector and its dual. It satisfies the appropriate Toda type differential or difference equations as a consequence of the recursion relations satisfied by the partition function of the path model, i.e. the weighted statistical sum over paths with a fixed end.

The Whittaker function of the  affine case at the critical level, when the central element acts by a scalar equal to minus the dual Coxeter number, is more subtle. In this paper, we show that the non-critical Whittaker function has an essential singularity in the critical limit, and admits a subleading asymptotic expansion, whose leading term is an eigenfunction of the critical affine Toda differential operator.
This case is of great interest, because the center of the universal enveloping algebra of the affine algebra becomes infinite-dimensional \cite{FrenResh} in the critical limit, and contains the deformed classical (Poisson) $W$-algebra.

Let us briefly define Whittaker vectors and functions in the classical case, and describe our results. Let $V$ be an irreducible, highest weight Verma module of the algebra $\g$, where $\g$ is either a simple or affine Lie algebra. Let $\n$ be the positive nilpotent subalgebra. We look for an element $w$ in the completion of $V$ on which $U(\n)$ acts by non-trivial characters. That is, if $\{e_i\}_{i\in I}$ are the Chevalley generators of $\n$, then $e_i w = \mu_i w$ with $\mu_i\in \C^*$ for all $i$. We can also say that $w$ is a simultaneous eigenvector of the elements of the nilpotent subalgebra. Then $w$ is called a Whittaker vector. It is an infinite sum and exists only in an infinite-dimensional module. It is uniquely determined up to a scalar multiple which is fixed by requiring the coefficient of the highest weight vector of $V$ in $w$ to be 1. A similar treatment using the opposite nilpotent subalgebra gives the Whittaker vector in the restricted dual of $V$. The matrix element of a generic element of the associated Cartan group between the Whittaker vector and its dual is the fundamental Whittaker function. It satisfies a quantum Toda differential equation, related to the action of the second Casimir element in the universal enveloping algebra.

Our method proceeds as follows.
Consider the spanning set $\{ f_{i_N}f_{i_{N-1}}\cdots f_{i_1} v, \, N\in \Z_+,\,  i_\ell\in I\}\subset V$, 
$\{f_i\}_{i\in I}$ the Chevalley generators of $\n_-$, $v$ the cyclic highest weight vector of $V$. 
Each such vector can be thought of as a path
$\bp$ on the positive cone of the root lattice $Q_+=\oplus_{i\in I} \Z_+\al_i$, starting at the origin, 
and taking successive elementary steps $\al_{i_1},\cdots \al_{i_N}$. We look for a Whittaker vector $w$ in the form of a linear combination
of such path vectors $w=\sum_\bp x(\bp) \bp$. The crucial idea behind the present work is that instead of looking for expressions for $w$ 
in a {\it basis} that incorporates the Serre relations of the algebra, we choose to write $w$ as a linear combination of {\it non independent}
vectors $\bp$. There is an infinite freedom in writing such expressions. Our main result 
(Theorem \ref{construc} for the finite case, and Theorem \ref{construcaff} for the affine case) is the construction of a particular, remarkably 
simple solution, in which the weight $x(\bp)$ factorizes into a product over the vertices visited by the path. The coefficient $x(\bp)$ can
therefore be thought of as a product over local Boltzmann weights for the path $\bp$.

Our approach also provides an alternative elementary derivation of the Toda type differential equations for the Whittaker functions
(Theorem \ref{todaW} for the finite case and Theorem \ref{todafW} for the affine case).
Indeed, using our particular solution, we derive easily an expression for the corresponding Whittaker functions, which admit a series expansion
with coefficients given by the path partition functions $Z_\beta=\sum_{{\rm paths}\, \bp: \,0\to \beta} x(\bp)$ for all $\beta\in Q_+$. 
The factorized form of $x(\bp)$ allows to write a simple recursion relation for these coefficients, which turns into the expected Toda type 
differential  equation for the Whittaker function. 

In this paper, we also generalize this construction to the quantum algebra $U_q(\mathfrak{sl}_{r+1})$  
(Theorems \ref{construcq} and \ref{todaqW}). 

The paper is organized as follows. We give the basic notation for the necessary data on simple and affine Lie algebras in Section 2. In Section 3, we compute the Whittaker vectors in the Verma module for finite-dimensional simple Lie algebra. We do the same for the affine algebras in Section 4. In Section 5, we compute the critical limit of the Whittaker function of the affine algebra. Section 6 is devoted to the generalization of our approach to the quantum algebra of type $A$. We gather a few concluding remarks in Section 7.
\medskip

\noindent{\bf Acknowledgments.} We thank O.Babelon, M.Bergvelt, A.Borodin, I.Corwin, N.Reshetikhin and S.Shakirov for discussions at various stages of this work.
R.K.'s research is supported by NSF grant DMS-1100929. P.D.F. and B.T. are supported by the NSF grant DMS-1301636 and the 
Morris and Gertrude Fine endowment. R.K. would like to thank the Institut de Physique Th\'eorique (IPhT) of Saclay, France, for hospitality during 
the completion of this work.

\section{Notations and Definitions}
Before presenting our results, we fix the combinatorial data defining the simple and affine Lie algebras. 

\subsection{Roots and weights}
Let $\g$ be a finite simple Lie algebra or affine Lie algebra,
with a Cartan matrix $C$ of rank $r$, 
and a Cartan decomposition $\g=\n_-\oplus \h\oplus \n_+$. The set $I$ denotes the index set of the simple roots, that is
$I=\{1,...,r\}$ in the finite case and $I=\{0,..,r\}$ in the affine case. The 
simple roots of $\g$ are the set $\Pi=\{\alpha_i:i\in I\}$. 
The root lattice $Q$ is the $\Z$-span of $\Pi$ and the positive root lattice is denoted by $Q_+=\bigoplus_{i\in I} \mathbb{Z}_+\al_i$. 

We use the notation 
$\{e_i, f_i, h_i: i\in I\}$ for the Chevalley generators of $\n_+, \n_-, \h$, respectively. Then $\g$ is generated by the Chevalley generators as a Lie algebra, with 
relations
\begin{eqnarray*}
[h_i, e_j] = C_{j,i} e_j, \quad [h_i,f_j] = -C_{j,i} f_j, \quad [e_i, f_j] = \delta_{i,j} h_j
\end{eqnarray*}
together with the Serre relations
$$
({\rm ad}~e_j)^{1-C_{ij}}( e_i) = ({\rm ad}~f_j)^{1-C_{ij}}(f_i)=0 .
$$

The positive coprime integers $d_i$ are defined by requiring the symmetry $C_{i,j} d_j = C_{j,i}d_i$. Then the co-roots of $\g$ are $\{\alpha_i^{\vee}: i\in I\}$ with $\alpha_i=d_i \alpha_i^\vee$, where $d_0=1$ in the affine case.

In the finite dimensional case, the highest root of $\g$ is denoted by $\theta$ and defines 
the positive integers (marks) $a_i$ as $\theta = \sum_{i=1}^r a_i \alpha_i$. For the affine algebra, the null root $\delta = \theta + \alpha_0$ with the mark $a_0=1$. The co-marks are defined by $d_i a_i^\vee = a_i$, and the dual coxeter number of $\g$ is $h^\vee=\sum_{i=0}^r a_i^\vee$. 

We define an inner product on the $\Q$-span of $\Pi$ by $(\alpha_i|\alpha_j^\vee)=C_{ij}$. 
In the affine case, this is a proper subspace of $\h^*$ since $C$ is singular.
We define the pairing between $\h^*$ and $\h$ by $h_j(\alpha_i)= (\alpha_i|\alpha_j^\vee)$, extended by linearity.

Let $\{\omega_i: i\in I \}$ denote the fundamental weights, defined by
$(\omega_i|\alpha_j^\vee)=\delta_{i,j}$. That is, 
\begin{eqnarray} \label{pairing}
h_i(\omega_j)=\delta_{i,j}.
\end{eqnarray}
Define $\rho\in \h^*$ to be the element such that $(\rho | \al_i^\vee)=1$ for all $i\in I$. In the finite case, $\rho=\sum_{i=1}^r \omega_i$. In the affine case,  define the affine weights $\Lambda_0=\omega_0, \Lambda_i=a_i^\vee \Lambda_0+\omega_i$ $(i>0)$, and $\wrho=\sum_{i\in I}{\Lambda_i}=h^\vee \Lambda_0 + \rho$ where $\rho$ is the finite element.

We will need the following bilinear forms:
\begin{eqnarray}\label{pairingstwo}
(\delta| \alpha_i)= (\delta|\delta) =0; \quad (\delta | \Lambda_0)=1;\\ \nonumber
(\omega_i|\alpha_j^\vee) = (\Lambda_i|\alpha_j^\vee)=\delta_{i,j};\quad (\Lambda_0|\Lambda_0)=0.
\end{eqnarray}

\subsection{Verma modules}
In this paper, we work with the irreducible Verma modules $V_\lambda$ generated by a cyclic vector $v$ with highest weight $\lambda\in \h^*$ such that $h_i(\lambda)\in \C^*\setminus \N$ for all $i\in I$:
$$
h v = h(\lambda) v, \quad \n_+ v = 0, \quad V_\lambda = U(\n_-) v.
$$
The affine Cartan algebra contains a one-dimensional center with generator $c=\sum_{i=0}^r  a_i^\vee h_i$. It acts on any irreducible module by a constant denoted by $k$. One can write the affine highest weight as $k \Lambda_0 + \lambda$ where $\lambda$ is a weight with respect to the finite part of the Cartan subalgebra.

The Verma module is infinite-dimensional, but it decomposes into a direct sum of finite-dimensional eigenspaces $V[\mu]\subset V$ with respect to the action of $\h$:
$$
V[\mu]=\{ w\in V_\lambda : h w = h(\mu) w\}.
$$
The subspace $V[\mu]$ is non-trivial if and only if $\mu = \lambda - \beta$ where $\beta\in Q_+$. We define the restricted dual $V_\lambda^*$
of $V_\lambda$ as the completed direct sum of the spaces $V[\mu]^*$, where the pairing is the natural Hermitian inner product on a complex vector space.

For the benefit of the mathematician reading this paper we define Dirac's very useful bra-ket notation, and will use it extensively throughout the paper. Assume $V$ is a complex vector space with a Hermitian inner product, which gives the corresponding natural isomorphism $*: V\to V^*$. Let $w^*\in V^*$ and $v \in V$, then $\langle w|$ is the notation for $w^*\in V^*$, $|v\rangle$ is the notation for $v\in V$, and $\langle w|v\rangle = w^*(v)$ by definition. Moreover,  $A$ acts on $V$ then it has a right action on $V^*$ as follows: $\langle w | A = \langle A^*w|$. Then $\langle w |A| v \rangle := (\langle w| A) |v\rangle = \langle w | (A | v \rangle )$. Here, $A^*$ is the Hermitian conjugate.

In the present context, we have $V=V_\lambda$ is the Verma module, $V^*$ its restricted dual, and we denote the cyclic, highest weight vector $v\in V_\lambda$ by $|\lambda\rangle$. We have $\langle \lambda|$ the corresponding vector in the dual to $V_\lambda$, such that $\langle \lambda|\lambda\rangle=1$ and
$$
\langle \lambda | h = h(\lambda) \langle \lambda |, \quad \langle \lambda | \n_-=0,\quad
V^*_\lambda = \langle \lambda | U(\n_+).
$$

In Section 6, we will also use the corresponding definitions for the quantum universal enveloping algebra $U_q(A_r)$. The combinatorial information of the roots and weights is the same, but we will use a slightly different definition of Whittaker vectors and functions than the one which follows. 

\subsection{Whittaker vectors and fundamental Whittaker functions}

For the classical Lie algebras, we have the following definition.

\begin{defn} \label{defnWhitt}
Let $w$ be an element in the completion of $V_\lambda$ such that
\begin{enumerate}
\item $\langle \lambda | w\rangle = 1;$
\item $e_i w = \mu_i w$ for all $i\in I$, where $\mu_i\in \C^*$.
\end{enumerate}
Such an element is called a Whittaker vector.
\end{defn}
We denote the unique solution to this set of equations by $|w\rangle^{\mu}_\lambda$ or simply $|w\rangle$ when there is no room for confusion about the parameters. It is the Whittaker vector corresponding to the nilpotent character $\mu=(\mu_i)_{i\in I}$. Clearly, $w$ must be an infinite sum in $V_\lambda$ and hence in the completion of the vector space, and any finite-dimension module does not contain a non-trivial Whittaker vector.

The fundamental Whittaker function is defined as follows. Let ${}^{\mu'}_\lambda\langle w |$ be the element in the restricted dual of $V_\lambda^*$, satisfying the corresponding conditions for a dual Whittaker vector. Namely, $\langle w | \lambda\rangle = 1$ and $\langle w | f_i = \mu_i' \langle w |$ with $\mu_i'\in \C^*$.
\begin{defn} \label{WhittFunc} 
The fundamental Whittaker function is the matrix element
$$
W_\lambda^{\mu\mu'}={}_{\lambda}^{\mu'}\langle w | {\rm e}^{\sum_{i\in I} \phi_i h_i} | w\rangle_{\lambda}^{\mu}
$$
\end{defn}
We consider $\phi_i$ as formal variables for the moment. 
Important considerations of analyticity and convergence are at the heart of Whittaker theory.

In the rest of the paper, we will compute series expressions for these functions and their quantizations.

\section{Whittaker functions for finite dimensional Lie algebras}\label{pathsec}
The Verma module $V_\lambda$ is spanned by monomials in the generators of $\n_-$, $\{f_i\}$ with $i\in [1,...,r]$, corresponding to the simple roots, acting on the highest weight vector,
$$
u=f_{i_N} \cdots f_{i_1}|\lambda\rangle,
$$
with weight $\lambda-\beta$ where $\beta=\al_{i_1}+\cdots+\al_{i_N}\in Q_+$.
The set of all such vectors is not linearly independent, due to the Serre relations.
However, this set is in bijection with the set of directed paths on $\Q_+\simeq \Z^r$, 
with steps only in the direction of the simple roots, as we explain below.

\subsection{Path models}
Let $\bp$ be a path on the positive cone of the root lattice 
$Q_+$ with steps from the set $\Pi$, written as $\bp=(p_0,p_1,...,p_N)$. Here, $N$ is the length of the path, $p_0$ is the origin $0\in Q_+$, $p_N=\beta\in Q_+$, and $p_i-p_{i-1}\in \Pi$.
\begin{defn}\label{pathdef}
The set ${\mathcal P}_\beta$ the set of all such directed paths from the origin to $\beta$ in $Q_+$. 
\end{defn}
The trivial path $(p_0)\in {\mathcal P}_0$ is denoted by $\mathbf 0$.

The correspondence between a path $\bp=(p_0,p_1,...,p_N)$, with
$p_k-p_{k-1}=\al_{i_k}$, and vectors in $V_\lambda$ is the following:
\begin{equation}\label{pvector}
\bp=(p_0,p_1,...,p_N)\mapsto |\bp\rangle = f_{i_N} \cdots f_{i_1}|\lambda\rangle.
\end{equation}

For example, the path $\bp=(0,\al_1,\al_1+\al_2,2\al_1+\al_2,3\al_1+\al_2,3\al_1+2\al_2)$ for $r=2$, $N=5$, and $\beta=3\al_1+2\al_2$ is illustrated below. The corresponding vector is $|\bp\rangle =f_2 f_1^2f_2 f_1 |\lambda\rangle$.

\begin{center}
\begin{tikzpicture}
    \draw[step=2, black!20] (0,0) grid (6,4);
    \draw[red, very thick, ->] (0,0) to (2,0); 
	\draw[red, very thick, ->] (2,0) to (2,2);
\draw[red, very thick, ->] (2,2) to (4,2);
\draw[red, very thick, ->] (4,2) to (6,2);
\draw[red, very thick, ->] (6,2) to (6,4);
\draw (0,0) node [below left] {(0,0)};
\draw(0,0) node {$\bullet$};
\draw (1,0) node [below] {$\al_1$};
\draw (2,1) node [right] {$\al_2$};
\draw (3,2) node [above] {$\al_1$};
\draw (5,2) node [above] {$\al_1$};
\draw (6, 3) node [right] {$\al_2$};
\draw (6,4) node [right] {$(3,2)$};
\draw (6,4) node {$\bullet$};
\end{tikzpicture}
\end{center}

A {\it path model} can be defined by a {\it vertex weight}\footnote{The reader is advised of the two unrelated usages in this paper of the word ``weight": One refers to the representation theoretical definition, and the other is used in the context of statistical models, also known as a Boltzmann weight.}
function 
$v:Q_+ \rightarrow \mathbb{C}$. The Boltzmann weight of a path is the product of vertex weights along the vertices visited by the path. That is, $\bp=(p_0,...,p_N)\in {\mathcal P}_\beta$ has a Boltzmann weight $x(\bp)$, where $x({\mathbf 0}):=1$ and otherwise
\begin{equation}\label{xweight}
x(\bp):=\prod_{k=1}^{N} \frac{1}{v(p_k)}.
\end{equation}
\begin{defn}\label{pathmodef}
The {\it partition function} $Z_\beta$ is the sum over all paths in $\mathcal P_\beta$ of the Boltzmann weights of the paths:
\begin{equation}
Z_\beta = \sum_{\bp\in {\mathcal P}_\beta} x(\bp)
\end{equation}
\end{defn}

Notice that $Z_0=x({\mathbf 0})=1$. 
The partition function $Z_\beta$ satisfies the following straightforward recursion relation:
\begin{lemma}
We have:
\begin{eqnarray} \label{recur}
v(\beta) \, Z_\beta= \sum_{i=1}^r Z_{\beta-\al_i}
\end{eqnarray}
With the initial condition $Z_0=1$, this determines $Z_\beta$ for all $\beta\in Q_+$. 
\end{lemma}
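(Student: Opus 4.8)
The plan is to establish the recursion by a direct combinatorial decomposition of the path set $\mathcal{P}_\beta$ according to the last step of each path. For any $\beta \in Q_+$ with $\beta \neq 0$, every path $\bp = (p_0, p_1, \dots, p_N) \in \mathcal{P}_\beta$ has a final vertex $p_N = \beta$ and a final step $p_N - p_{N-1} = \al_i$ for some $i \in I$. Removing this last step yields a truncated path $\bp' = (p_0, \dots, p_{N-1}) \in \mathcal{P}_{\beta - \al_i}$. Conversely, appending the step $\al_i$ to any path ending at $\beta - \al_i$ produces a path in $\mathcal{P}_\beta$. This sets up a bijection between $\mathcal{P}_\beta$ and the disjoint union $\bigsqcup_{i=1}^r \mathcal{P}_{\beta - \al_i}$, where we adopt the convention that $\mathcal{P}_{\beta - \al_i} = \emptyset$ whenever $\beta - \al_i \notin Q_+$.

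The next step is to track what this decomposition does to the Boltzmann weights. From the definition \eqref{xweight}, the weight of $\bp$ is a product of factors $1/v(p_k)$ over $k = 1, \dots, N$. Factoring out the single term $1/v(p_N) = 1/v(\beta)$ coming from the final vertex, we obtain
$$
x(\bp) = \frac{1}{v(\beta)}\, x(\bp'),
$$
since the remaining product $\prod_{k=1}^{N-1} 1/v(p_k)$ is exactly the weight of the truncated path $\bp'$. This factorization is the crucial point, and it is precisely the reason the vertex-weight formalism was set up so that each factor depends only on the vertex visited: the truncation operation strips off one vertex weight cleanly.

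Summing over all $\bp \in \mathcal{P}_\beta$ and using the bijection, I would write
$$
Z_\beta = \sum_{\bp \in \mathcal{P}_\beta} x(\bp) = \sum_{i=1}^r \sum_{\bp' \in \mathcal{P}_{\beta - \al_i}} \frac{1}{v(\beta)}\, x(\bp') = \frac{1}{v(\beta)} \sum_{i=1}^r Z_{\beta - \al_i},
$$
and multiplying through by $v(\beta)$ gives the claimed relation \eqref{recur}. The statement that this determines $Z_\beta$ for all $\beta$ follows by induction on the height $\mathrm{ht}(\beta) = \sum_i n_i$ where $\beta = \sum_i n_i \al_i$: the base case $Z_0 = 1$ is given, and the right-hand side of \eqref{recur} involves only $Z_{\beta - \al_i}$ with strictly smaller height, all previously determined.

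I do not expect any serious obstacle here, as the result is essentially bookkeeping. The one point requiring mild care is the edge-case convention that $Z_{\beta - \al_i}$ is taken to be zero when $\beta - \al_i \notin Q_+$, so that the sum over $i$ automatically omits steps that would lead outside the positive cone; the bijection above respects this because such truncated paths simply do not exist. A second small point is the implicit assumption $v(\beta) \neq 0$ for $\beta \neq 0$, needed so that dividing by $v(\beta)$ is legitimate; this is part of the standing hypothesis that the vertex weight function takes nonzero values, and I would flag it rather than prove it.
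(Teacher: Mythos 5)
Your proof is correct and is precisely the ``straightforward'' argument the paper omits: decomposing $\mathcal P_\beta$ by the last step and peeling off the factor $1/v(\beta)$ from the Boltzmann weight. Your two flagged conventions ($Z_{\beta-\al_i}=0$ when $\beta-\al_i\notin Q_+$, and $v(\beta)\neq 0$ for $\beta\neq 0$, which holds here by the genericity assumption $h_i(\lambda)\in\C^*\setminus\N$ but fails at the affine critical level treated later) are exactly the right points to note.
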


\begin{defn} Given $\beta \in Q_+$, define the ``vector partition function"
\begin{equation}\label{partvector}
|Z_\beta \rangle =\sum_{\bp\in {\mathcal P}_\beta} x(\bp)\, |\bp \rangle
\end{equation}
where $|\bp\rangle$ is defined in \eqref{pvector}.
\end{defn}

Our goal is to find vertex weights $v(\beta)$ so that the Whittaker function can be written as a sum over vector partition functions.

\subsection{Path model for Whittaker vectors}
We need the following preliminary Lemma. 
\begin{lemma}\label{weightlem}
The vertex weight function $v:Q_+\rightarrow \mathbb{C}$ given by
\begin{equation}
v(\beta)=(\lambda+\rho | \beta) - \frac{1}{2} (\beta | \beta)
\end{equation}
is the unique solution to the difference equations
\begin{equation}\label{diffv}
v(\beta+\al_i)-v(\beta)=(\lambda-\beta | \al_i) \qquad (i=1,2,...,r)
\end{equation}
subject to the initial condition $v(0)=0$. 
\end{lemma}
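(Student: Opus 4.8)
The plan is to establish existence by a one-line direct substitution and uniqueness by an elementary induction on the height of $\beta$; the formula is explicit, so there is nothing delicate to set up.

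For existence, I would substitute the proposed $v(\beta)=(\lambda+\rho|\beta)-\half(\beta|\beta)$ into the left-hand side of \eqref{diffv} and expand by bilinearity. The linear part contributes $(\lambda+\rho|\al_i)$, and, using the symmetry of the form (which holds because the $d_i$ were chosen so that $C_{i,j}d_j=C_{j,i}d_i$), the quadratic part contributes $-\half\big[(\beta+\al_i|\beta+\al_i)-(\beta|\beta)\big]=-(\beta|\al_i)-\half(\al_i|\al_i)$. Collecting terms, $v(\beta+\al_i)-v(\beta)=(\lambda|\al_i)-(\beta|\al_i)+\big[(\rho|\al_i)-\half(\al_i|\al_i)\big]$, while the target right-hand side is $(\lambda-\beta|\al_i)=(\lambda|\al_i)-(\beta|\al_i)$. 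Thus the whole claim collapses to the single scalar identity $(\rho|\al_i)=\half(\al_i|\al_i)$.

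This identity is the crux, and I expect it to be the only place the root-system normalization enters. Using $\al_i=d_i\al_i^\vee$ together with the defining property $(\rho|\al_i^\vee)=1$ gives $(\rho|\al_i)=d_i$, while $(\al_i|\al_i)=d_i(\al_i|\al_i^\vee)=d_iC_{ii}=2d_i$, so indeed $(\rho|\al_i)=d_i=\half(\al_i|\al_i)$. In words, the $\rho$-shift in the linear term is calibrated exactly to cancel the constant $-\half(\al_i|\al_i)$ thrown off by the quadratic term. The initial condition is immediate, since $v(0)=(\lambda+\rho|0)-\half(0|0)=0$.

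For uniqueness I would argue by induction on the height ${\rm ht}(\beta)=\sum_i n_i$, where $\beta=\sum_i n_i\al_i\in Q_+$. If $v_1$ and $v_2$ both solve \eqref{diffv} with $v_1(0)=v_2(0)=0$, then $g=v_1-v_2$ satisfies $g(\beta+\al_i)=g(\beta)$ for all $i$ and $g(0)=0$. Any nonzero $\beta\in Q_+$ can be written as $\gamma+\al_i$ with $\gamma\in Q_+$ of strictly smaller height, so $g(\beta)=g(\gamma)=0$ by the inductive hypothesis, whence $g\equiv 0$. I do not anticipate a genuine obstacle: the lemma is elementary, and the only point requiring care is verifying the scalar identity $(\rho|\al_i)=\half(\al_i|\al_i)$ correctly against the conventions fixed in Section 2, rather than any structural difficulty.
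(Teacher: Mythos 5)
Your proof is correct and follows essentially the same route as the paper: a direct substitution reducing the claim to the scalar identity $(\rho|\al_i)=d_i=\half(\al_i|\al_i)$. The only difference is that you also spell out the uniqueness step via induction on height, which the paper states but leaves implicit.
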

\begin{proof}
To show that $v(\beta)$ satisfies the recursion relation, we compute
\begin{eqnarray*}
v(\beta+\alpha_i) &=& (\lambda+\rho|\beta + \alpha_i) - \half(\beta+\alpha_i|\beta+\alpha_i) \\
&=& v(\beta) + (\lambda|\alpha_i) + (\rho-\half\alpha_i|\alpha_i) - (\beta|\alpha_i) \\
&=& v(\beta) + (\lambda-\beta|\alpha_i),
\end{eqnarray*}
where we used the fact that $(\rho|\alpha_i)=d_i$ and $\half(\alpha_i|\alpha_i)=d_i$.
\end{proof}

In terms of coordinates, writing $\beta=\sum_{i=1}^r \beta_i \al_i$ and $\lambda=\sum_{i=1}^r \lambda_i \omega_i$, the above weight reads:
\begin{equation*}\label{weight}
v(\beta)\equiv{\bar v}(\beta_1,\ldots,\beta_r)=\sum_{i=1}^r (1+\lambda_i)d_i\beta_i -\sum_{i=1}^r d_i \beta_i^2-\sum_{1\leq i<j\leq r} d_jC_{i,j}\beta_i\beta_j,
\end{equation*}
and it satisfies the difference equations:
\begin{equation*}
{\bar v}(\beta_1,\ldots,\beta_{i-1},\beta_i+1,\beta_{i+1},\ldots,\beta_r)-{\bar v}(\beta_1,\ldots,\beta_r)= d_i (\lambda_i  -\sum_{j=1}^r C_{j,i}\beta_j).
\end{equation*}

We will construct the Whittaker vector $|w\rangle$ in the completion of $V_\lambda$ by expressing it as a linear combination of elements in the spanning set \eqref{pvector}:
\begin{equation}\label{ansatza}
|w\rangle=\sum_{\beta_\in Q_+}  \bmu^{\beta} \sum_{\bp\in {\mathcal P}_\beta} y(\bp) |\bp \rangle
\end{equation}
for some coefficients $y(\bp)$ to be determined. Here we have used the multi-index notation
$\bmu^\beta=\prod_{i=1}^r (d_i\mu_i)^{\beta_i}$ for the vectors $\bmu=(d_1\mu_1,...,d_r\mu_r)$ and $\beta=\sum_{i=1}^r \beta_i\al_i$.
As the vectors $\{|\bp\rangle,\ \bp\in {\mathcal P}_\beta\}$ are not linearly independent,
we have some freedom in the choice of the coefficients, and we will choose them so that they have a factorized form. 

The defining conditions for the Whittaker vector, $e_i|w\rangle=\mu_i|w\rangle$ for all $i$ result in a 
recursion relation for the coefficients in \eqref{ansatza}.

\begin{figure}
 \includegraphics[width=12.cm]{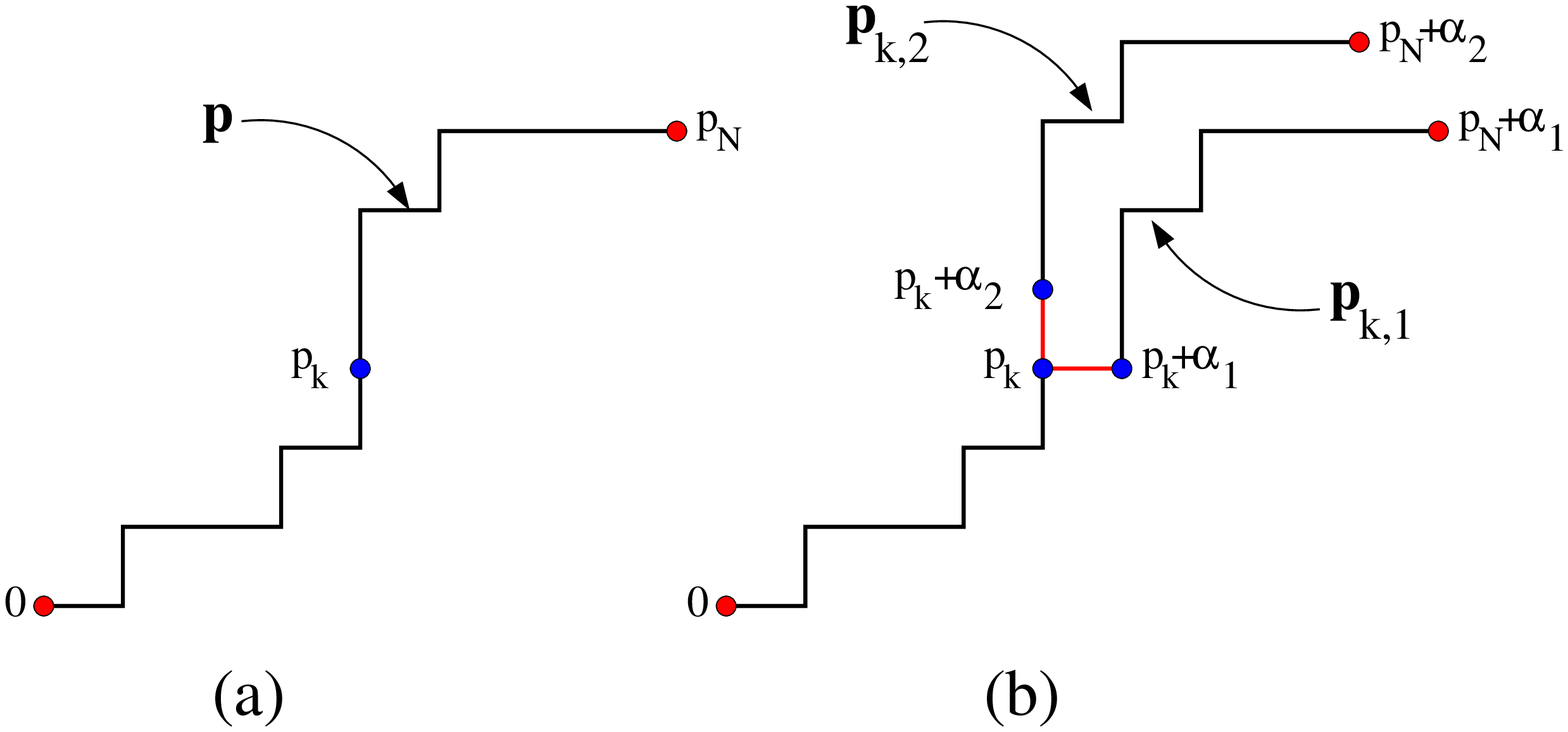}
\caption{\small A sample path $\bp$ for $A_2$ with a marked vertex $p_k$ (a) and the two corresponding augmented paths at $p_k$, 
$\bp_{k,1}$ and $\bp_{k,2}$ (b).}
\label{fig:augmented}
\end{figure}

\begin{lemma}
A sufficient condition for the vector $|w\rangle$ of \eqref{ansatza} to satisfy the Whittaker vector conditions of Definition \ref{defnWhitt} is that
the coefficients $y(\bp)$ obey the following linear system: 
\begin{equation} \label{eigencondition}
y(\bp)=\sum_{k=0}^{N} (\lambda- p_k | \al_i) y(\bp_{k,i}) \qquad {\rm for}\, {\rm all} \, \, \bp=(p_0,...,p_N)\in {\mathcal P}_\beta
\end{equation}
where the augmented path $\bp_{k,i}\in {\mathcal P}_{\beta+\al_i}$ is obtained 
by inserting a step 
$\al_i$ at the vertex $p_k$ of $\bp$ (see Fig.\ref{fig:augmented} for an illustration):
\begin{equation}\label{augmented} 
\bp_{k,i}=(p_0,p_1,...,p_k,p_k+\al_i,p_{k+1}+\al_i,...,p_N+\al_i) \end{equation}
\end{lemma}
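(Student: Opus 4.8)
The plan is to impose the Whittaker condition $e_i|w\rangle = \mu_i|w\rangle$ directly on the ansatz \eqref{ansatza} and to extract a sufficient condition on the coefficients $y(\bp)$ by working one weight space at a time. Write $|Y_\beta\rangle = \sum_{\bp\in\mathcal P_\beta} y(\bp)|\bp\rangle$, so that $|w\rangle = \sum_\beta \bmu^\beta |Y_\beta\rangle$ and each $|Y_\beta\rangle$ has weight $\lambda-\beta$. Since $e_i$ raises the weight by $\al_i$, the vector $e_i|Y_\beta\rangle$ lands in the weight space $\lambda-(\beta-\al_i)$; hence on the left-hand side only the $\beta+\al_i$ term can match the weight-$(\lambda-\beta)$ component $\mu_i\bmu^\beta|Y_\beta\rangle$ of the right-hand side. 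Using $\bmu^{\beta+\al_i} = (d_i\mu_i)\bmu^\beta$ and cancelling $\mu_i\bmu^\beta$ (legitimate as $\mu_i\in\C^*$ by Definition \ref{defnWhitt}), the whole infinite system collapses to the single family of identities $d_i\,e_i|Y_{\beta+\al_i}\rangle = |Y_\beta\rangle$, one for each $i\in I$ and $\beta\in Q_+$.

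The second step computes $e_i|\bq\rangle$ explicitly for $\bq=(q_0,\dots,q_M)\in\mathcal P_{\beta+\al_i}$. Commuting $e_i$ rightward through $|\bq\rangle = f_{j_M}\cdots f_{j_1}|\lambda\rangle$ with $[e_i,f_j]=\delta_{i,j}h_i$ and the highest-weight condition $e_i|\lambda\rangle=0$, only the $\al_i$-steps contribute: each step $l$ with $\al_{j_l}=\al_i$ produces a factor $h_i$ acting on the weight-$(\lambda-q_{l-1})$ vector to its right, i.e. the scalar $(\lambda-q_{l-1}\,|\,\al_i^\vee)$, times the vector $f_{j_M}\cdots f_{j_{l+1}}f_{j_{l-1}}\cdots f_{j_1}|\lambda\rangle$. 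This last vector is exactly $|\bp\rangle$ for the path $\bp$ obtained by deleting step $l$ from $\bq$. The key combinatorial observation — and the step I expect to require the most care — is that this deletion is inverse to the augmentation \eqref{augmented}: the pairs $(\bq,l)$ with a marked $\al_i$-step are in bijection with the pairs $(\bp,k)$, $\bp\in\mathcal P_\beta$ and $0\le k\le N$, via $\bq=\bp_{k,i}$ and $l=k+1$, under which the relevant vertex $q_{l-1}$ coincides precisely with $p_k$.

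Carrying out this reindexing gives $d_i\,e_i|Y_{\beta+\al_i}\rangle = \sum_{\bp\in\mathcal P_\beta}\bigl(\sum_{k=0}^{N} d_i(\lambda-p_k\,|\,\al_i^\vee)\,y(\bp_{k,i})\bigr)|\bp\rangle$, and absorbing the overall $d_i$ into the pairing via $\al_i=d_i\al_i^\vee$ converts each coefficient into $(\lambda-p_k\,|\,\al_i)$; this simultaneously explains why $\al_i$ rather than $\al_i^\vee$ appears in \eqref{eigencondition} and why the $d_i$-rescaling is built into $\bmu$. Matching this against $|Y_\beta\rangle=\sum_\bp y(\bp)|\bp\rangle$ coefficient by coefficient yields exactly \eqref{eigencondition}. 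The point that must be flagged — and the reason the statement claims only a \emph{sufficient} condition — is that the vectors $\{|\bp\rangle:\bp\in\mathcal P_\beta\}$ are linearly dependent on account of the Serre relations, so equating the bracketed coefficient of each $|\bp\rangle$ with $y(\bp)$ is strictly stronger than the vector identity $d_i\,e_i|Y_{\beta+\al_i}\rangle=|Y_\beta\rangle$ actually demands. This is the deliberate loss of uniqueness that the authors later exploit to pin down a factorized solution, and no converse is asserted.
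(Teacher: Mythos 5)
Your proof is correct and follows essentially the same route as the paper: commute $e_i$ through the monomial so that each $\al_i$-step contributes $(\lambda-p_k|\al_i^\vee)$ from $h_i$, use the bijection between marked $\al_i$-steps of paths in $\mathcal P_{\beta+\al_i}$ and augmentations $\bp_{k,i}$ of paths in $\mathcal P_\beta$, absorb the $d_i$ from $\bmu^{\beta+\al_i}$ into the pairing via $\al_i=d_i\al_i^\vee$, and match coefficients of the (linearly dependent) vectors $|\bp\rangle$, which is why the condition is only sufficient. Your explicit weight-space reduction to $d_i\,e_i|Y_{\beta+\al_i}\rangle=|Y_\beta\rangle$ is just a cleaner organization of the same coefficient identification the paper performs.
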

\begin{proof}
Since $e_i|\bp\rangle=0$ if the path $\bp$ has no step $\al_i$, we may write
\begin{eqnarray*}
e_i|w\rangle&=& \sum_{\beta\in \al_i+Q_+}\bmu^{\beta-\al_i} d_i\mu_i 
\sum_{\bp\in {\mathcal P}_\beta\atop p=(p_0,...,p_{N+1})}
y(\bp)\sum_{k=0}^N \delta_{p_{k+1}-p_k,\al_i}\, \delta_{\bp'_{k,i},\bp}\, (\lambda-p_k|\al_i^\vee)\, 
|\bp'\rangle\\
&=& \mu_i \sum_{\beta'\in Q_+}\bmu^{\beta'}\sum_{\bp'\in {\mathcal P}_{\beta'}\atop \bp'=(p_0',...,p_N')} \sum_{k=0}^N 
y(\bp'_{k,i})\,  (\lambda-p_k|\al_i)\, |\bp'\rangle\\
\mu_i |w\rangle&=&\mu_i\sum_{\beta\in Q_+}\bmu^{\beta}\sum_{\bp\in {\mathcal P}_\beta} y(\bp)\, |\bp\rangle 
\end{eqnarray*}
The Lemma follows by identifying the coefficient of $|\bp\rangle$ in the last two lines. This is only a sufficient condition, as the 
vectors $|\bp\rangle$ are not linearly independent.
\end{proof}

Using these two Lemmas, we have the main result of this section (see also \cite{GoodWall}):
\begin{thm} \label{construc}
The unique vector in the completion of the Verma module of $\g$ with highest weight $\lambda$ which satisfies the properties of a Whittaker vector as in Definition \ref{defnWhitt} is
\begin{equation} \label{formula}
|w\rangle=\sum_{\beta\in Q_+}  \bmu^{\beta} |Z_\beta\rangle
\end{equation}
where $|Z_\beta\rangle$ are the vector partition functions \eqref{partvector} of the path
 model for $\g$ with the weights defined in Lemma \ref{weightlem}.
\end{thm}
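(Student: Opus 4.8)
The plan is to verify directly that the factorized weights $y(\bp)=x(\bp)$ solve the linear system \eqref{eigencondition}. By the preceding Lemma this is a \emph{sufficient} condition for the vector \eqref{formula}—which is exactly the ansatz \eqref{ansatza} with $y=x$, reassembled through the definition \eqref{partvector} of $|Z_\beta\rangle$—to satisfy $e_i|w\rangle=\mu_i|w\rangle$. Condition (1) of Definition \ref{defnWhitt} is immediate, since the only path reaching level $\beta=0$ is the trivial path $\mathbf 0$, contributing $\bmu^0\,x(\mathbf 0)\,\langle\lambda|\lambda\rangle=1$. Uniqueness I would handle separately at the end.

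First I would record the identity $(\lambda-p_k|\al_i)=v(p_k+\al_i)-v(p_k)$, which is just the difference equation \eqref{diffv} of Lemma \ref{weightlem} evaluated at $\beta=p_k$; substituting it into the right-hand side of \eqref{eigencondition} is precisely what turns the weighted sum over augmented paths into a telescoping sum. Next I would compare Boltzmann weights: since $\bp_{k,i}$ shares its first $k$ vertices with $\bp$ and carries the remaining ones shifted by $\al_i$, one finds
\[
\frac{x(\bp_{k,i})}{x(\bp)}=\frac{1}{v(p_k+\al_i)}\,R_k,\qquad R_k:=\prod_{j=k+1}^{N}\frac{v(p_j)}{v(p_j+\al_i)} .
\]
Combining these two facts, the $k$-th term of \eqref{eigencondition}, divided by $x(\bp)$, collapses to $\big(1-\tfrac{v(p_k)}{v(p_k+\al_i)}\big)R_k=R_k-R_{k-1}$.

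The key point is at the boundary $k=0$: since $p_0=0$ and $v(0)=0$, the would-be term $R_{-1}=\tfrac{v(p_0)}{v(p_0+\al_i)}R_0$ vanishes, so the relation that the $k$-th term divided by $x(\bp)$ equals $R_k-R_{k-1}$ holds uniformly for $0\le k\le N$. The sum then telescopes to $R_N-R_{-1}=1-0=1$, which is exactly \eqref{eigencondition}. I expect this boundary bookkeeping—recognizing that the vanishing of the vertex weight at the origin is what annihilates the leftover telescoping term—to be the only delicate point; the rest is the routine reorganization of the products, where the only real risk is miscounting which vertices of $\bp_{k,i}$ are shifted by $\al_i$.

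Finally, for uniqueness, suppose $|w\rangle$ and $|w'\rangle$ both satisfy Definition \ref{defnWhitt}, and set $|u\rangle=|w\rangle-|w'\rangle$ with graded components $|u_\beta\rangle\in V[\lambda-\beta]$. Then $|u_0\rangle=0$ and the eigenvector equations read $e_i|u_\beta\rangle=\mu_i|u_{\beta-\al_i}\rangle$. If $|u\rangle\neq0$, take $\beta$ minimal with $|u_\beta\rangle\neq0$; then $e_i|u_\beta\rangle=0$ for all $i$, so $|u_\beta\rangle$ is a singular vector at a positive level, contradicting the assumed irreducibility of $V_\lambda$ (guaranteed by $h_i(\lambda)\notin\N$). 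Hence $|u\rangle=0$, and the Whittaker vector is unique.
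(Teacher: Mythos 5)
Your proof is correct and follows essentially the same route as the paper's: you verify the sufficient condition \eqref{eigencondition} for the factorized weights by substituting the difference equation of Lemma \ref{weightlem} and telescoping, with the boundary term killed by $v(0)=0$ (your ratios $R_k$ are just a cleaner bookkeeping of the paper's explicit product manipulations). The one addition is your explicit uniqueness argument via a minimal nonzero graded component yielding a singular vector, which the paper asserts but does not spell out; that paragraph is correct and a welcome supplement.
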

\begin{proof}
The coefficient of $|\lambda\rangle$ in \eqref{formula} is 1, so condition (1) in Definition \ref{defnWhitt} is satisfied. We need to show that 
$e_i|w\rangle=\mu_i|w\rangle$ for each $i\in \{1,...,r\}$. 
That is, we must show that the coefficients $x(\bp)$ in Equation \eqref{xweight} satisfy 
\eqref{eigencondition}. 
By definition, $x(\bp) = \prod_{j=1}^N \frac{1}{v(p_j)}$ where $N$ is the length of $\bp$. Using \eqref{augmented}, $\bp$ has length $N+1$ and 
$$x(\bp_{k,j}) = \prod_{j=1}^k \frac{1}{v(p_j)} \prod_{j=k}^N \frac{1}{v(p_j+\alpha_i)}.$$ We have (since $v(p_0)=v(0)=1$)
\begin{eqnarray*}
\sum_{k=0}^{N}(\lambda - p_k | \al_i)x(\bp_{k,i})&=&
\sum_{k=0}^{N} \left(v(p_k+\al_i)-v(p_k)\right) \prod_{j=1}^{k}\frac{1}{v(p_j)}
\prod_{j=k}^{N} \frac{1}{v(p_j+\al_i)}\\
&=&\underbrace{-v(p_0)\prod_{j=0}^{N} \frac{1}{v(p_j+\al_i)} }_\text{$=0$} \\
&&\hspace{-1in}\quad +\sum_{k=0}^{N-1}
\prod_{j=1}^{k}\frac{1}{v(p_j)}\underbrace{\left( v(p_k+\al_i)\frac{1}{v(p_k+\al_i)} -\frac{1}{v(p_{k+1})}v(p_{k+1}) \right)}_\text{$=0$}  
\prod_{j=k+1}^{N} \frac{1}{v(p_j+\al_i)}\\
&&\hspace{-1in}\quad +\prod_{j=1}^{N}\frac{1}{v(p_j)} v(p_N+\al_i)\frac{1}{v(p_N+\al_i)}
=x(\bp)
\end{eqnarray*}
where we have rearranged the terms in the sum so that it is clear that everything but the last term in the summation over $k$ cancels. 
\end{proof}

\subsection{Rank 2 examples}

We give explicit examples for the case where $\g$ has rank 2, that is, 
$\g=A_2, B_2 \simeq C_2$, and $G_2$. 
The paths for all rank $2$ Lie algebras look the same, as $Q_+$ can be identified with $(\Z_+)^2$ in all cases.
\begin{center}
\begin{tikzpicture}
    \draw[step=2,black!20] (0,0) grid (8,6);
	\draw[very thick, ->] (0,0) to (9,0);
	\draw[very thick, ->] (0,0) to (0,7);

\draw(9,0) node [below] {$\al_1$};
\draw(0,7) node [left] {$\al_2$};

\draw (0,0) node [below left] {(0,0)};
\draw(0,0) node {$\bullet$};

\draw (2,0) node {$\bullet$};
\draw (2,0) node [below] {$(1,0)$};
\draw (4,0) node {$\bullet$};
\draw (4,0) node [below] {$(2,0)$};
\draw (6,0) node {$\bullet$};
\draw (6,0) node [below] {$(3,0)$};
\draw (0,2) node {$\bullet$};
\draw (0,2) node [left] {$(0,1)$};
\draw (0,4) node {$\bullet$};
\draw (0,4) node [left] {$(0,2)$};
\draw (2,2) node {$\bullet$};
\draw (2,2) node [above right] {$(1,1)$};
\draw (4,2) node {$\bullet$};
\draw (4,2) node [above right] {$(2,1)$};

\draw[red, very thick, ->] (0,0) to (2,0);
\draw[red, very thick, ->] (2,0) to (2,2);
\draw[red] (1,0) node [above] {$\al_1$};
\draw[red] (2,1) node [left] {$\al_2$};
\draw[blue, very thick, ->] (0,0) to (0,2);
\draw[blue, very thick, ->] (0,2) to (2,2);
\draw[blue] (0,1) node [left] {$\al_2$};
\draw[blue] (1,2) node [above] {$\al_1$};
\end{tikzpicture}
\end{center}

For each algebra, we consider two paths that start at the origin and end at the vertex $(1,1)$: $(0,\al_1,\al_1+\al_1)$ corresponding to the vector $f_2f_1|\lambda\rangle$ and $(0,\al_2,\al_1+\al_2)$ corresponding to the vector $f_1f_2|\lambda\rangle$. The difference between the Whittaker vectors is in the weights corresponding to each path.

\vskip.1in
\noindent{$A_2$:} In this case, $d_1=d_2=1$. Writing $\lambda=\lambda_1\omega_1+\lambda_2\omega_2$ and $\beta=\beta_1\al_1+\beta_2\al_2$, we have
\begin{eqnarray*}
v(\beta)={\bar v}(\beta_1,\beta_2)&=&(\lambda_1+1)\beta_1+(\lambda_2+1)\beta_2-\beta_1^2-\beta_2^2+\beta_1\beta_2\\
&=&
\beta_1(\lambda_1+1-\beta_1)+\beta_2(\lambda_2+1-\beta_2)+\beta_1\beta_2.
\end{eqnarray*}
For example, 
\begin{eqnarray*}
v(\al_1)&=&\bar{v}(1,0)=\lambda_1;\\
v(2\al_1)&=&\bar{v}(2,0)=2(\lambda_1-1);\\
v(\al_2)&=&\bar{v}(0,1)=\lambda_2;\\
v(\al_1+\al_2)&=&\bar{v}(1,1)=\lambda_1+\lambda_2+1;\\
v(2\al_1+\al_2)&=&\bar{v}(2,1)=2\lambda_1+\lambda_2.
\end{eqnarray*}
The Whittaker vector with character $\mu=(\mu_1, \mu_2)$ is given by:
\begin{eqnarray*}
|w\rangle_\lambda^\mu &=& 
{ \sum_{n\in \mathbb{Z}_+} \frac{\mu_1^n}{n! (\lambda_1)_n} f_1^n |\lambda\rangle}+{ \sum_{k\in \mathbb{Z}_+} \frac{\mu_2^k}{k! (\lambda_2)_k} f_2^k |\lambda\rangle}\\
&+&{\frac{\mu_1\mu_2}{\lambda_1(\lambda_1+\lambda_2+1)} f_2f_1 |\lambda\rangle 
+ \frac{\mu_1\mu_2}{\lambda_2(\lambda_1+\lambda_2+1)} f_1f_2 |\lambda\rangle} \\
&+&\frac{\mu_1^2\mu_2}{\lambda_1(\lambda_1+\lambda_2+1)(2\lambda_1+\lambda_2)} f_1f_2f_1 |\lambda\rangle 
+ \frac{\mu_1^2\mu_2}{\lambda_2(\lambda_1+\lambda_2+1)(2\lambda_1+\lambda_2)} f_1^2f_2 |\lambda\rangle\\
&+&\{\frac{\mu_1^2\mu_2}{(\lambda_1)2 (\lambda_1-1)(2\lambda_1+\lambda_2)}  f_2f_1^2|\lambda\rangle\ + ...
\end{eqnarray*}
\vskip.1in

\noindent{$B_2$:} Here, $d_1=2, d_2=1$. The vertex weight function is given by:
\begin{eqnarray*}
\bar{v}(\beta_1, \beta_2)&=&2(\lambda_1+1)\beta_1+(\lambda_2+1)\beta_2-2\beta_1^2-\beta_2^2+2\beta_1\beta_2\\
&=&2\beta_1(\lambda_1+1-\beta_1) + \beta_2(\lambda_2+1-\beta_2) + 2\beta_1\beta_2.
\end{eqnarray*}
The first few terms in the Whittaker vector are
\begin{eqnarray*}
|w\rangle_\lambda^\mu &=& \sum_{n\in \mathbb{Z}_+} \frac{(2\mu_1)^n}{2^n n! (\lambda_1)_n} f_1^n |\lambda\rangle + \sum_{k\in \mathbb{Z}_+} \frac{\mu_2^k}{k! (\lambda_2)_k} f_2^k|\lambda\rangle \\
&+&\frac{(2\mu_1)\mu_2}{(2\lambda_1) (2\lambda_1+\lambda_2+2)} f_2f_1|\lambda\rangle + \frac{(2\mu_1)\mu_2}{\lambda_2(2\lambda_1+\lambda_2+2)} f_1f_2|\lambda\rangle \\
&+& \frac{(2\mu_1)^2 \mu_2}{(2\lambda_1)(2\lambda_1+\lambda_2+2)(4\lambda_1+\lambda_2)} f_1f_2f_1|\lambda\rangle + \frac{(2\mu_1)^2\mu_2}{\lambda_2(2\lambda_1+\lambda_2+2)(4\lambda_1+\lambda_2)}f_1^2f_2|\lambda\rangle \\
&+&\frac{(2\mu_1)^2 \mu_2}{ 2^2(\lambda_1) 2(\lambda_1-1)(4\lambda_1+\lambda_2)} f_2f_1^2|\lambda\rangle +\cdots
\end{eqnarray*}

The case $C_2$ is obtained from $B_2$ be switching the indices $1$ and $2$.
In particular, we have $d_1=1, d_2=2$, and the vertex weight function is
\begin{eqnarray*}
\bar{v}(\beta_1,\beta_2)&=&(\lambda_1+1)\beta_1+2(\lambda_2+1)\beta_2-\beta_1^2-2\beta_2^2+2\beta_1\beta_2\\
&=&\beta_1(\lambda_1+1-\beta_1)+2\beta_2(\lambda_2+1-\beta_2)+2\beta_1\beta_2\ .
\end{eqnarray*}

\vskip.1in
\noindent{$G_2$:} Here, $d_1=1, d_2=3$ and vertex weight is
\begin{eqnarray*}
\bar{v}(\beta_1,\beta_2)&=&(\lambda_1+1)\beta_1+2(\lambda_2+1)\beta_2-\beta_1^2-3\beta_2^2+3\beta_1\beta_2\\
&=&\beta_1(\lambda_1+1-\beta_1)+3 \beta_2( \lambda_2+1-\beta_2) + 3\beta_1\beta_2.
\end{eqnarray*}

The first few terms in the Whittaker vector are
\begin{eqnarray*}
|w\rangle_\lambda^\mu &=& \sum_{n\in \mathbb{Z}_+} \frac{\mu_1^n}{n! (\lambda_1)_n} f_1^n |\lambda\rangle + \sum_{k\in \mathbb{Z}_+} \frac{(3\mu_2)^k}{3^k k! (\lambda_2)_k} f_2^k|\lambda\rangle \\
&+&\frac{\mu_1(3\mu_2)}{\lambda_1(\lambda_1+3\lambda_2+3)  } f_2f_1|\lambda\rangle + \frac{\mu_1(3\mu_2)}{(3\lambda_2)(\lambda_1+3\lambda_2+3)} f_1f_2|\lambda\rangle\\
&+&\frac{\mu_1^2(3\mu_2)}{\lambda_1 (\lambda_1+3\lambda_2+3)(2\lambda_1+3\lambda_2+4)}f_1f_2f_1|\lambda\rangle+ \frac{\mu_1^2(3\mu_2)}{ (3\lambda_2)(\lambda_1+3\lambda_2+3)(2\lambda_1+3\lambda_2+4)} f_1^2f_2|\lambda\rangle\\
&+& \frac{\mu_1^2(3\mu_2)}{(\lambda_1)2(\lambda_1-1)(2\lambda_1+3\lambda_2+4)}f_2f_1^2|\lambda\rangle +\cdots
\end{eqnarray*}

\subsection{The Whittaker function}

The Whittaker function is defined in Def. \ref{WhittFunc}. 
The dual Whittaker vector can be expressed in terms of path partition functions in the same way as the Whittaker vector itself. 
We associate a dual  vector $\langle \bp|\in V_\lambda^*$ to each path $\bp=(p_0,...,p_N)\in {\mathcal P}_\beta$ via:
$$\langle \bp|=\langle  \lambda| e_{i_1} e_{i_2}\cdots e_{i_k} $$ 
where $p_k-p_{k-1}=\al_{i_k}$ for $k=1,2,...,N$. 
In the same way as for the Whittaker vector, we have
$${}_\lambda^{\mu'}\langle w| =\sum_{\beta\in Q_+} (\bmu')^\beta \langle Z_\beta |$$
where $\bmu'=(d_1\mu_1',...,d_r\mu_r')$ and with the obvious definition for the left vector partition function for the path model
with the vertex weights $v$ of Lemma \ref{weightlem}, namely: $\langle Z_\beta |=\sum_{\bp\in{\mathcal P}_\beta}x(\bp)\langle \bp |$.

Before proceeding, let us introduce some more notations and definitions.

\begin{defn}
Define
\begin{eqnarray}
\phi &=& \sum_{i=1}^r \phi_i \al_i^\vee \label{phidef} \\
\nabla_\phi&=&\sum_{i=1}^r \frac{\partial}{\partial \phi_i} \omega_i\label{nabladef}
\end{eqnarray}
\end{defn}
Notice that we have
\begin{equation}
\frac{\partial}{\partial \phi_i} (\omega_i| \phi_j \al_j^\vee)= \frac{\partial \phi_j}{\partial \phi_i}(\omega_i, \al_j^\vee)=\delta_{ij}
\end{equation}

We have the following result: 

\begin{thm}\label{witpa}
The Whittaker function is the following generating series for the partition functions of the corresponding 
path model with fixed endpoints:
\begin{equation}\label{wifu}
W_\lambda^{\mu, \mu'}(\phi)=\sum_{\beta\in Q_+}  \bnu^\beta e^{(\lambda-\beta| \phi)} Z_\beta
\end{equation}
where $\bnu=(d_1\mu_1\mu_1',...,d_r\mu_r\mu_r')$. 
\end{thm}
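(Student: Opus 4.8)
The plan is to substitute the path-model expressions for the Whittaker vector and its dual into Definition \ref{WhittFunc} and collapse the resulting double sum into the claimed single generating series. First I would use that $e^{\sum_{i}\phi_i h_i}$ acts diagonally on weight spaces: since $|Z_\beta\rangle$ has weight $\lambda-\beta$ and $\phi=\sum_i\phi_i\al_i^\vee$ by \eqref{phidef}, one has $e^{\sum_i\phi_i h_i}|Z_\beta\rangle=e^{(\lambda-\beta|\phi)}|Z_\beta\rangle$. Inserting $|w\rangle=\sum_\beta\bmu^\beta|Z_\beta\rangle$ and ${}^{\mu'}_\lambda\langle w|=\sum_{\beta'}(\bmu')^{\beta'}\langle Z_{\beta'}|$ and using that the restricted-dual pairing between distinct weight spaces vanishes, only the diagonal terms $\beta'=\beta$ survive, so that
\[
W_\lambda^{\mu,\mu'}(\phi)=\sum_{\beta\in Q_+}(\bmu')^\beta\,\bmu^\beta\,e^{(\lambda-\beta|\phi)}\,\langle Z_\beta|Z_\beta\rangle .
\]
Comparing with \eqref{wifu} and noting that $(\bmu')^\beta\bmu^\beta=\prod_i(d_i^2\mu_i\mu_i')^{\beta_i}$, the theorem reduces to the single norm identity $\langle Z_\beta|Z_\beta\rangle = Z_\beta/\prod_i d_i^{\beta_i}$, because then the prefactor becomes $\prod_i(d_i\mu_i\mu_i')^{\beta_i}=\bnu^\beta$.

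This norm identity is the heart of the matter, and I would establish it by a recursion built from two one-step relations. The first is geometric: peeling the last vertex $p_N=\beta$ off each path (so that $x(\bp)=x(\bp^-)/v(\beta)$, where $\bp^-$ is $\bp$ with its terminal $\al_i$-step deleted) and using the factorized form \eqref{xweight} gives the vector recursion $v(\beta)|Z_\beta\rangle=\sum_{i=1}^r f_i|Z_{\beta-\al_i}\rangle$, with the convention $|Z_\gamma\rangle:=0$ for $\gamma\notin Q_+$. The second is algebraic: feeding ${}^{\mu'}_\lambda\langle w|=\sum_\beta(\bmu')^\beta\langle Z_\beta|$ into the dual Whittaker condition $\langle w|f_i=\mu_i'\langle w|$ and matching the components of each fixed weight yields the peeling relation $\langle Z_\beta|f_i=\tfrac1{d_i}\langle Z_{\beta-\al_i}|$.

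Writing $N_\beta:=\langle Z_\beta|Z_\beta\rangle$ and combining the two relations then gives
\[
v(\beta)\,N_\beta=\sum_{i=1}^r\langle Z_\beta|f_i|Z_{\beta-\al_i}\rangle=\sum_{i=1}^r\frac1{d_i}\,N_{\beta-\al_i}.
\]
Substituting the ansatz $N_\beta=Z_\beta/\prod_j d_j^{\beta_j}$ and using $\prod_j d_j^{(\beta-\al_i)_j}=\big(\prod_j d_j^{\beta_j}\big)/d_i$, this collapses exactly onto the partition-function recursion $v(\beta)Z_\beta=\sum_i Z_{\beta-\al_i}$ of \eqref{recur}. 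Since $N_0=\langle\lambda|\lambda\rangle=1=Z_0$ and $v(\beta)\neq0$ on the irreducible Verma module (the same nonvanishing that makes $Z_\beta$ well defined), an induction on the height $\sum_i\beta_i$ proves $N_\beta=Z_\beta/\prod_i d_i^{\beta_i}$ for all $\beta\in Q_+$, which finishes the proof.

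I expect the only genuine obstacle to be this norm identity; the rest is bookkeeping with the diagonal action and the weight grading. The reason the identity is tractable despite the path vectors $|\bp\rangle$ being linearly dependent is that the two recursions pair against each other: the geometric recursion lowers the ket while the algebraic peeling lowers the bra, so the overlap $N_\beta$ reduces directly to the overlaps $N_{\beta-\al_i}$ without ever having to evaluate $\langle\bp'|\bp\rangle$ for non-independent paths or to expand any commutator. Finally, I would remark that all steps are term-by-term within the finite-dimensional weight spaces, so \eqref{wifu} is a well-defined formal series in the $e^{(\lambda-\beta|\phi)}$ and no question of analytic convergence arises at this stage.
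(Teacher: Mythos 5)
Your proof is correct, but it takes a genuinely different route from the paper's. The paper expands only the \emph{dual} vector in path partition functions and then evaluates $\langle \bp|w\rangle$ in one stroke: for $\bp\in\mathcal P_\beta$ one has $\langle\lambda|e_{i_1}\cdots e_{i_N}|w\rangle=\prod_\ell\mu_{i_\ell}=\prod_i\mu_i^{\beta_i}$ by repeated use of $e_i|w\rangle=\mu_i|w\rangle$, so $\langle Z_\beta|w\rangle=Z_\beta\prod_i\mu_i^{\beta_i}$ and the theorem follows in three lines without ever touching $\langle Z_{\beta}|Z_{\beta}\rangle$. You instead expand both sides, reduce to diagonal terms by the weight grading, and isolate the norm identity $\langle Z_\beta|Z_\beta\rangle=Z_\beta/\prod_i d_i^{\beta_i}$, which you prove by pairing the combinatorial ket recursion $v(\beta)|Z_\beta\rangle=\sum_i f_i|Z_{\beta-\al_i}\rangle$ against the bra peeling relation $\langle Z_\beta|f_i=\tfrac1{d_i}\langle Z_{\beta-\al_i}|$ extracted from the dual Whittaker condition; all of these steps check out (including the $d_i$ bookkeeping, since $(\bmu')^\beta\bmu^\beta/\prod_i d_i^{\beta_i}=\bnu^\beta$). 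What the paper's argument buys is brevity and the fact that it only needs the Whittaker property on one side; what yours buys is that it makes explicit the nontrivial identity $\langle Z_\beta|Z_\beta\rangle=Z_\beta/\prod_i d_i^{\beta_i}$ among the linearly dependent path vectors, which is only implicit in the paper (it follows a posteriori by comparing $\langle Z_\beta|w\rangle=Z_\beta\prod_i\mu_i^{\beta_i}$ with the weight-$(\lambda-\beta)$ component $\bmu^\beta\langle Z_\beta|Z_\beta\rangle$). Both arguments rely on the unproved-but-stated dual analogue of Theorem \ref{construc}, namely ${}_\lambda^{\mu'}\langle w|=\sum_\beta(\bmu')^\beta\langle Z_\beta|$, so neither is more self-contained on that point.
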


In coordinates, the Whittaker function \eqref{wifu} can be written as:
\begin{equation}
W_{\lambda}^{\mu,\mu'} (\phi)=
\sum_{\beta\in Q_+} \prod_{i=1}^r (d_i\mu_i\mu'_i)^{\beta_i}e^{\sum_{i,j=1}^r 
\phi_i (\lambda_i\delta_{i,j}-C_{j,i}\beta_j)} Z_{\beta}
\end{equation}

\begin{proof}
Let us first compute the scalar product:
$\Sigma_\bp=\langle \bp | w\rangle$ between a left path vector with $\bp=(p_0,...,p_N)\in {\mathcal P}_\beta$ and the right Whittaker vector.
We get:
$$ \Sigma_\bp=\langle \lambda| e_{i_1}e_{i_2}\cdots e_{i_N} |w\rangle = 
\prod_{\ell=1}^N \mu_{i_{\ell}} \langle \lambda|w\rangle=\prod_{i=1}^r\mu_i^{\beta_i}$$
by use of the property $e_i|w\rangle=\mu_i|w\rangle$.
Using the result of Theorem \ref{construc}, we may now compute explicitly:
\begin{eqnarray*}
W_{\lambda}^{\mu,\mu'} (\phi)&=&{}_\lambda^{\mu'}\langle w| e^{\sum_i \phi_i h_i} |w\rangle_{\lambda}^\mu 
=\sum_{\beta\in Q_+} {\bmu'}^\beta  \langle Z_\beta| e^{\sum_i \phi_i h_i} |w\rangle_\lambda^{\mu} \\
&=&\sum_{\beta\in Q_+} {\bmu'}^{\beta} \, e^{(\lambda- \beta|\phi)}
\sum_{\bp\in {\mathcal P}_\beta}  x(\bp) 
\Sigma_\bp=\sum_{\beta\in Q_+}  \bnu^{\beta} e^{(\lambda-\beta| \phi)} Z_{\beta}
\end{eqnarray*}
where we have used the fact that for all $\bp\in \beta$ the left path vector $\langle \bp|$
has weight  $\lambda-\beta$, so that $\langle Z_\beta| h_i =(\lambda-\beta|\al_i^\vee)\langle Z_\beta|$.
\end{proof}

For later use, we introduce the modified Whittaker function defined as:
\begin{equation}
{\widetilde W}_{\lambda}^{\mu,\mu'} (\phi)= e^{\sum_{i=1}^r\phi_i}\, W_{\lambda}^{\mu,\mu'} (\phi) =\sum_{\beta\in Q_+} 
\bnu^\beta e^{(\lambda+\rho -\beta| \phi)} Z_\beta
\end{equation}
These modified Whittaker functions are usually referred to as {\it fundamental} Whittaker functions, and are characterized by their form as
power series expansions of the variables $e^{-\sum_j C_{i,j}\phi_j}$, which, up to the factor $e^{(\lambda+\rho|\phi)}$,
remain bounded when $\phi_i\to\infty$ in the domain $\sum_j C_{i,j}\phi_j\geq 0$ 
(i.e. $\phi\to\infty$ within the fundamental Weyl chamber of $\mathfrak{g}$). 

\begin{example}
For $G=A_1$, the modified Whittaker function ${\widetilde W}_{\lambda}^{\mu,\mu'} (\phi)$ is
$$ {\widetilde W}_{\lambda}^{\mu,\mu'} (\phi)=\sum_{\beta_1\in \Z_+} (\mu_1\mu_1')^{\beta_1} e^{\phi_1(\lambda_1+1-2\beta_1)}\,  Z_{\beta_1} $$
where the coefficient
$$Z_{\beta_1} =\frac{1}{(\beta_1)! \lambda_1 (\lambda_1-1)\cdots (\lambda_1+1-\beta_1)} $$
obeys the recursion relation ${\bar v}(\beta_1)Z_{\beta_1}=Z_{\beta_1-1}$, with ${\bar v}(\beta_1)=(\lambda_1+1)\beta_1-\beta_1^2$.
\end{example}

\begin{example}\label{sl3funex}
For $G=A_2$,  ${\widetilde W}_{\lambda}^{\mu,\mu'} (\phi)$ is
$$ {\widetilde W}_{\lambda}^{\mu,\mu'} (\phi)=\sum_{\beta_1,\beta_2\in \Z_+} 
(\mu_1\mu_1')^{\beta_1}(\mu_2\mu_2')^{\beta_2} e^{\phi_1(\lambda_1+1-2\beta_1)+\phi_2(\lambda_2+1-2\beta_2)}\,  Z_{\beta_1,\beta_2} $$
where the coefficient
$Z_{\beta_1,\beta_2}$
obeys the recursion relation 
\begin{equation}\label{eqa2} 
\left((\lambda_1+1)\beta_1+(\lambda_2+1)\beta_2-\beta_1^2-\beta_2^2+\beta_1\beta_2\right)
Z_{\beta_1,\beta_2}=Z_{\beta_1-1,\beta_2}+Z_{\beta_1,\beta_2-1}
\end{equation}
It is easy to show by induction that
\begin{equation}\label{bump} Z_{\beta_1,\beta_2}=\frac{
\prod_{j=1}^{\beta_1 + \beta_2} (\lambda_1 + \lambda_2 +2- j)}
{\prod_{j=1}^{\beta_1} j (\lambda_1+1 - j)(\lambda_1 + \lambda_2 +2- j) \prod_{j=1}^{\beta_2}
j (\lambda_2+1 - j)(\lambda_1 + \lambda_2 +2- j)}\end{equation}
This formula first appeared in \cite{BUMP}. Similar factorization properties in the case of $\mathfrak{sl}_3$ were observed in \cite{GoodWall}.
\end{example}

The main result about Whittaker functions is that they satisfy a Toda equation:
\begin{thm}\label{todaW}
The modified Whittaker function is a solution of the Toda differential equation associated with the root system of the Lie algebra $\g$:
\begin{equation}
H\, {\widetilde W}_{\lambda}^{\mu,\mu'} (\phi) = E_\lambda \, {\widetilde W}_{\lambda}^{\mu,\mu'} (\phi) 
\end{equation}
where the Hamiltonian operator $H$ and eigenvalue $E_\lambda$ are defined as:
\begin{eqnarray}
H&=&
\frac{1}{2} \left( \nabla_\phi | \nabla_\phi \right) + \sum_{j=1}^r \nu_j e^{- (\al_j|\phi)} \\
E_\lambda&=& \frac{1}{2} \left( \lambda+\rho | \lambda+\rho\right)
\end{eqnarray}
where $\nu_j=d_j\mu_j\mu_j'$.
\end{thm}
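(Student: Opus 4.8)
The plan is to apply the operator $H$ directly, term by term, to the series defining ${\widetilde W}_{\lambda}^{\mu,\mu'}(\phi)$, treating the kinetic part $\half(\nabla_\phi|\nabla_\phi)$ and the potential part $\sum_j \nu_j e^{-(\al_j|\phi)}$ separately, and then showing that the recursion relation \eqref{recur} forces the two contributions to recombine into $E_\lambda {\widetilde W}$. Each summand is a plane wave $e^{(\lambda+\rho-\beta|\phi)}$ weighted by $\bnu^\beta Z_\beta$, so the whole computation reduces to understanding how $H$ acts on such plane waves.

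For the kinetic term, the key point is that $\nabla_\phi$ from \eqref{nabladef} has been engineered precisely so that $(\nabla_\phi|\nabla_\phi)$ reproduces the quadratic Casimir form. First I would record that $\frac{\partial}{\partial\phi_i}e^{(\gamma|\phi)}=(\gamma|\al_i^\vee)e^{(\gamma|\phi)}$, since $\phi=\sum_k\phi_k\al_k^\vee$, and hence
$$(\nabla_\phi|\nabla_\phi)\,e^{(\gamma|\phi)}=\sum_{i,j}(\omega_i|\omega_j)(\gamma|\al_i^\vee)(\gamma|\al_j^\vee)\,e^{(\gamma|\phi)}.$$
The dual-basis relation $(\omega_i|\al_j^\vee)=\delta_{ij}$ gives the expansion $\gamma=\sum_i(\gamma|\al_i^\vee)\omega_i$, so the double sum collapses to $(\gamma|\gamma)$. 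Applying this with $\gamma=\lambda+\rho-\beta$ and expanding $\half(\lambda+\rho-\beta\,|\,\lambda+\rho-\beta)=E_\lambda-\big((\lambda+\rho|\beta)-\half(\beta|\beta)\big)=E_\lambda-v(\beta)$ shows that the kinetic term contributes $E_\lambda{\widetilde W}-\sum_\beta\bnu^\beta v(\beta)e^{(\lambda+\rho-\beta|\phi)}Z_\beta$.

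For the potential term, each factor $e^{-(\al_j|\phi)}$ shifts the exponent from $\lambda+\rho-\beta$ to $\lambda+\rho-(\beta+\al_j)$. After reindexing $\beta\mapsto\beta-\al_j$ and using $\nu_j\bnu^{\beta-\al_j}=\bnu^\beta$, the potential contribution becomes $\sum_\beta\bnu^\beta e^{(\lambda+\rho-\beta|\phi)}\sum_{j=1}^r Z_{\beta-\al_j}$, where $Z_{\beta-\al_j}=0$ whenever $\beta-\al_j\notin Q_+$. The recursion relation \eqref{recur}, namely $v(\beta)Z_\beta=\sum_j Z_{\beta-\al_j}$, then identifies this inner sum with $v(\beta)Z_\beta$, so the potential term contributes exactly $+\sum_\beta\bnu^\beta v(\beta)e^{(\lambda+\rho-\beta|\phi)}Z_\beta$. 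Adding the two pieces, the $v(\beta)$-terms cancel and leave $H\,{\widetilde W}=E_\lambda{\widetilde W}$.

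I do not expect a genuine obstacle here: the argument is an exact term-by-term manipulation of formal series. The one step deserving care is the collapse of the double sum to $(\gamma|\gamma)$, i.e. the identification of $\half(\nabla_\phi|\nabla_\phi)$ with the Casimir acting on plane waves, which is exactly where the normalization of $\nabla_\phi$ in \eqref{nabladef} together with the duality $(\omega_i|\al_j^\vee)=\delta_{ij}$ is used. A minor bookkeeping point is the consistency of the reindexing with the convention $Z_\gamma=0$ for $\gamma\notin Q_+$; in particular the boundary case $\beta=0$ is automatic, since $v(0)=0$ matches the empty sum $\sum_j Z_{-\al_j}=0$.
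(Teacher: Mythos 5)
Your proposal is correct and follows essentially the same route as the paper: the identical plane-wave computation $(\nabla_\phi|\nabla_\phi)e^{(\gamma|\phi)}=(\gamma|\gamma)e^{(\gamma|\phi)}$ via the dual bases $\{\al_i^\vee\}$, $\{\omega_j\}$, the identity $\frac12(\lambda+\rho|\lambda+\rho)-\frac12(\lambda+\rho-\beta|\lambda+\rho-\beta)=v(\beta)$, and the recursion \eqref{recur} after reindexing the potential term. The only difference is cosmetic: the paper packages $E_\lambda-\frac12(\nabla_\phi|\nabla_\phi)$ into a single operator $K$ and shows $K\widetilde W$ equals the potential term, whereas you apply $H$ directly and exhibit the cancellation of the $v(\beta)$ contributions.
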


In coordinates, the Hamiltonian and the eigenvalue are given by:

\begin{eqnarray}
H&=&
\frac{1}{2}\sum_{i,j}d_j(C^{-1})_{i,j} \frac{\partial^2}{\partial \phi_i\partial\phi_j}
+\sum_{j=1}^r \nu_j  e^{-\sum_k C_{j,k}\phi_k } \label{defhg} \\
E_\lambda&=& \frac{1}{2}\sum_{i,j}d_j(C^{-1})_{i,j}(1+\lambda_i)(1+\lambda_j)\label{defeg}
\end{eqnarray}

\begin{proof}
Denote by $e(\phi)= e^{(\lambda+\rho -\beta|\phi)}$ and $K=E_\lambda-\frac{1}{2} \left( \nabla_\phi | \nabla_\phi \right) $. 
Using
$
(\nabla_\phi | \nabla_\phi)=\sum_{i,j} ( \omega_i |\omega_j) \frac{\partial^2}{\partial \phi_i \partial \phi_j}
$,
we may express the action of $K$ on $e(\phi)$ as:
\begin{eqnarray*} K\, e(\phi)& = &
\frac{1}{2}\left( (\lambda+\rho|\lambda+\rho)  
- \sum_{i,j} (\lambda+\rho - \beta| \al_i^\vee)(\omega_i|\omega_j)(\al_j^\vee|\lambda+\rho-\beta) \right)e(\phi) \\
&=&\frac{1}{2}\left( (\lambda+\rho| \lambda+\rho) - (\lambda+\rho-\beta| \lambda+\rho-\beta) \right) e(\phi)\\
&=&\left( (\lambda+\rho | \beta) - \frac{1}{2}(\beta|\beta) \right) e(\phi) = v(\beta) e(\phi) 
\end{eqnarray*}
Here, we used the fact that $\left\{\al_i^\vee\right\}$ 
and $\left\{\omega_j\right\}$ are dual bases. 

Let us now act with $K$ on the Whittaker function and use the recursion relation in Lemma \ref{recur} to rewrite:
\begin{eqnarray*}
K\, \widetilde{W}_\lambda^{\mu, \mu'}(\phi)&=& \sum_{\beta\in Q_+} \bnu^\beta e(\phi) v(\beta) Z_\beta
=\sum_{\beta\in Q_+} \bnu^\beta e^{(\lambda+\rho-\beta| \phi)} \sum_{j=1}^r Z_{\beta-\al_j}\\
&=&\sum_{j=1}^r \sum_{\beta\in Q_+} \bnu^{\beta+\al_j} e^{(\lambda+\rho -\beta|\phi) -(\al_j| \phi)} Z_\beta
=\sum_{j=1}^r \nu_j e^{-(\al_j, \phi)} \, W_\lambda^{\mu, \mu'}(\phi)
\end{eqnarray*}
The theorem follows.
\end{proof}

\begin{example}\label{todasl3}
In the case of $G=A_2$, the quantum Toda Hamiltonian \eqref{defhg} and its corresponding eigenvalue  \eqref{defeg} read:
\begin{eqnarray*} 
H&=& \frac{1}{3}\left(D_1^2 +D_1D_2+D_2^2\right)+\nu_1\, e^{-(2\phi_1-\phi_2)}+\nu_2\, e^{-(2\phi_2-\phi_1)} \\
E_{\lambda_1,\lambda_2}&=&\frac{1}{3}(\lambda_1^2+\lambda_1\lambda_2+\lambda_2^2)+\lambda_1+\lambda_2+1
\end{eqnarray*}
where we use the notation $D_i=\frac{\partial}{\partial \phi_i}$ for $i=1,2$.
\end{example}

\section{Path models for the Whittaker vector of affine Lie algebras: The non-critical level}\label{noncritsec}
In the case of the the irreducible Verma modules of non-twisted, affine Lie algebras, 
denoted by $\widehat{\g}$ in this section, the Whittaker vectors and fundamental Whittaker 
functions of the previous section can be extended in a straightforward manner, as will be 
shown in this section. There one additional feature in this case: 
The existence of the critical level, when 
 the central element of $\widehat\g$ acts by a scalar $k$ equal to 
$-h^\vee$, where $h^\vee$ is the dual coxeter number. In this case, our Boltzmann weights 
have singular values. This is a consequence of the fact that the affine algebra at the critical level 
has an infinite-dimensional center, isomorphic to a deformed classical $\mathcal W$-
algebra \cite{FrenResh} (as a Poisson algebra). This case is treated
separately in the next section, by using an asymptotic expansion of the formulas we 
derive in this section.

Let $V_\Lambda$ be an irreducible Verma module of  $\widehat \g$ with affine highest 
weight $\Lambda$. We use the notation $\Lambda = k \Lambda_0 + \lambda$, where $\lambda$ is a 
generic weight with respect to the finite Lie algebra, and $k$ is a complex number which is 
not a positive integer. We denote the cyclic, highest weight vector of $V_\Lambda$ by $|
\Lambda\rangle$.

The definition of the Whittaker vector in the completed Verma module is the same as in the finite-dimensional case:
The Whittaker vector in the irreducible Verma module $V_\Lambda$ with character $
\widehat\mu=(\mu_0,\mu_1,...,\mu_r)\in (\C^*)^{r+1}$
is the unique element $|w\rangle$ in the completion of $V_{\Lambda}$ such that 
$\langle 
\Lambda |w\rangle =1$ and:
\begin{equation}
e_i\, |w\rangle =\mu_i \, |w\rangle \qquad (i\in [0,r]).
\end{equation}

The path model defined in Section \ref{pathsec} 
for classical Lie algebra is generalized as follows. Paths have steps in 
$\Pi=\{\alpha_0,...,\alpha_r\}$, and are between the origin to an arbitrary point 
$\wbeta\in\wQ_+$. The set of all such paths is denoted by
${{\mathcal P}}_{\widehat \beta}$. The vertex weight function $\wv: 
\wQ_+ \to \C$ is
\begin{equation}\label{weightaff}
\wv(\widehat\beta)=( \Lambda+\wrho | \widehat \beta)- \frac{1}{2}(\widehat
\beta | \widehat\beta).
\end{equation}
\begin{lemma} The vertex weight function 
\eqref{weightaff} is the unique solution to the set of difference equations
\begin{equation*}
\wv(\widehat\beta+\al_i) - \wv(\widehat\beta)=(\Lambda-\widehat\beta | \al_i), 
\hspace{0.2in} (i\in [0, r])
\end{equation*}
subject to the initial condition $\wv(0)=0$. 
\end{lemma}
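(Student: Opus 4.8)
The plan is to follow verbatim the proof of the finite-rank Lemma \ref{weightlem}, checking that the single algebraic identity on which it rests survives the passage to the affine setting. For \textbf{existence} I would expand
\begin{equation*}
\wv(\widehat\beta+\al_i)=(\Lambda+\wrho | \widehat\beta+\al_i)-\half(\widehat\beta+\al_i | \widehat\beta+\al_i),
\end{equation*}
and collect terms to obtain
\begin{equation*}
\wv(\widehat\beta+\al_i)=\wv(\widehat\beta)+(\Lambda-\widehat\beta | \al_i)+\big[(\wrho | \al_i)-\half(\al_i | \al_i)\big].
\end{equation*}
Hence the prescribed difference equation holds exactly when the bracketed correction vanishes for every $i\in[0,r]$.

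The \textbf{key identity} to verify is therefore $(\wrho | \al_i)=\half(\al_i | \al_i)$. From the defining pairings in \eqref{pairingstwo}, namely $(\Lambda_j | \al_i^\vee)=\delta_{i,j}$, together with $\wrho=\sum_{j\in I}\Lambda_j$, one reads off $(\wrho | \al_i^\vee)=1$ for all $i\in[0,r]$, including the affine node $i=0$. Combined with $\al_i=d_i\al_i^\vee$ this gives $(\wrho | \al_i)=d_i$. On the other hand $(\al_i | \al_i^\vee)=C_{i,i}=2$, so $(\al_i | \al_i)=d_i(\al_i^\vee | \al_i)=2d_i$ and thus $\half(\al_i | \al_i)=d_i$. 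The two sides agree, so the correction vanishes. Note that the normalization $d_0=1$ in the affine case guarantees the identity at the affine node on the same footing as the finite nodes, so no separate check of $i=0$ is needed.

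For \textbf{uniqueness}, every $\widehat\beta\in\wQ_+$ is reached from the origin by finitely many unit steps $\al_i$; since the prescribed differences together with $\wv(0)=0$ determine the value at each lattice point by induction on the height $\sum_i\widehat\beta_i$, and we have just exhibited one solution, the solution is unique. Consistency of the prescription across distinct lattice paths to the same $\widehat\beta$ is automatic once an explicit closed-form solution is in hand.

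The only genuinely affine subtlety, and the point I would watch most carefully, is that the bilinear form $(\cdot|\cdot)$ is degenerate here because $C$ is singular, so the form is a priori only defined on the $\Q$-span of $\Pi$. Every pairing appearing above, namely $(\wrho|\al_i)$, $(\Lambda|\al_i)$, $(\widehat\beta|\al_i)$ and $(\al_i|\al_i)$, pairs a root-lattice element either against another root-lattice element or against $\wrho$ or $\Lambda$, all of which are covered by the conventions \eqref{pairingstwo}; in particular I would use $(\wrho|\al_i^\vee)=1$ rather than trying to pair $\wrho$ with $\al_i$ through the degenerate form directly. Granting this, the computation is formally identical to the finite case and no new obstacle arises.
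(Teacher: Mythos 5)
Your proposal is correct and follows exactly the route the paper intends: the paper simply declares this a ``straightforward generalization of Lemma \ref{weightlem}'', and your verification of the key identity $(\wrho|\al_i)=\half(\al_i|\al_i)=d_i$ for all $i\in[0,r]$ (including the affine node via $d_0=1$) together with the height-induction uniqueness argument is precisely that generalization carried out explicitly.
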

The proof is a straightforward generalization of Lemma \ref{weightlem}.

In terms of coordinates in $\wQ_+$, we can write
\begin{equation}
 \wv(\beta_0, \ldots, \beta_r)=\sum_{i=0}^r (\lambda_i+1)\beta_i d_i -\frac{1}
{2}\sum_{i,j=0}^r d_j\widehat C_{i,j}\beta_i\beta_j,
\end{equation}
and the difference equations are
\begin{equation}
 \wv(\beta_0,\ldots, \beta_i+1, \ldots, \beta_r)-\bar \wv(\beta_0, \ldots, 
\beta_r)
= d_i (\lambda_i  -\sum_{j=0}^r \widehat C_{j,i}\beta_j)\qquad (i\in [0,r]).
\end{equation}

The partition function $Z_{\widehat\beta}$ is defined in the same way as for the finite dimensional case. Clearly, it satisfies the recursion relation
\begin{equation}\label{recaff}
\wv(\widehat\beta)\, Z_{\widehat\beta}= \sum_{i=0}^r Z_{\widehat\beta-\al_i}.
\end{equation}

\begin{remark}\label{vaffrem}
It is easy to see that
\begin{equation}
\wv(\widehat\beta  + \widehat\gamma)=\wv(\widehat\beta)+\wv
(\widehat\gamma)-(\widehat \beta|\widehat\gamma)
\end{equation}
for any affine roots $\widehat \beta$ and $\widehat\gamma$.
Applying this to the root $\wbeta$ written as
\begin{equation*}
\widehat \beta=\sum_{i=0}^r \beta_i \al_i=\beta_0(\delta-\theta)+\sum_{i=1}^r\beta_i \al_i
=\beta_0\delta+(\beta-\beta_0\theta ),
\end{equation*}
where $\delta$ is the null root and $\theta$ the highest root of the finite dimensional algebra $\g$, we find
\begin{eqnarray*}
\wv (\widehat\beta)&=& \wv(\beta_0 \delta) +\wv(\beta - \beta_0 
\theta)-(\beta_0\delta | \beta -\beta_0\theta)\\
&=&( \Lambda+\wrho | \beta_0 \delta) +v (\beta - \beta_0 \theta)\\
&=& (k+h^\vee)\beta_0 +v (\beta - \beta_0 \delta).
\end{eqnarray*}
We have used the bilinear forms
$( \Lambda | \delta) = k$ and $(\wrho | \delta)= h^\vee$, the dual coxeter number,  from 
\eqref{pairingstwo}. Notice that 
$\widehat\beta-\beta_0\delta=\beta-\beta_0\theta$ is a non-affine weight and the affine 
vertex weight function $\wv$ on $\widehat\beta-\beta_0\delta$ reduces to the finite 
vertex weight function $v$. 
Unlike in the finite case, it is now possible to have $\wv(\widehat\beta)=0$ even if $
\widehat\beta\neq 0$. 
This situation happens exactly in the {\it critical} case, when the level takes the critical 
value $k=-h^\vee$, and when 
$\widehat\beta-\beta_0\delta=0$, i.e. $\beta_i=a_i \beta_0$ for $i\in [1,r]$.  

\end{remark}

We can now write the Whittaker vector:
\begin{thm}\label{construcaff}
For non-critical level $k\neq -h^{\rm v}$, the Whittaker vector for $\widehat \g$ in the 
completion of $V_{\Lambda}$ with
character $\widehat\mu=(\mu_0, \ldots, \mu_r)$ is given by
\begin{equation}
|w\rangle_{\Lambda}^{\widehat\mu}= \sum_{\widehat\beta\in \widehat Q_+}{\widehat \bmu}
^{\widehat\beta} \, |Z_{\widehat\beta}\rangle 
\end{equation}
where ${\widehat\bmu}=(d_0\mu_0, d_1 \mu_1, \ldots, d_r\mu_r)$, ${\widehat\bmu}
^{\widehat\beta}=\prod_{i=0}^r (d_i\mu_i)^{\beta_i}$ with $d_0=1$, and 
\begin{equation*}
|Z_{\widehat\beta}\rangle = \sum_{\bp\in {{\mathcal P}}_{\widehat \beta}} x(\bp) |\bp\rangle,
\end{equation*}
where the sum extends over paths on $\widehat Q_+$ that end at $\widehat \beta$,
with weight $x(\bp)$ (as in \eqref{xweight}) equal to the product of inverses of the vertex 
weights $\wv$ of eq.\eqref{weightaff},
at the non-zero vertices visited by the path $\bp$.
\end{thm}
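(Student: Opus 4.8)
The plan is to reduce Theorem \ref{construcaff} to the finite-dimensional case treated in Theorem \ref{construc}, since the entire combinatorial apparatus carries over verbatim. First I would verify condition (1) of the Whittaker vector definition: the only path ending at $\widehat\beta = 0$ is the trivial path $\mathbf 0$, so the coefficient of $|\Lambda\rangle$ in $|w\rangle_\Lambda^{\widehat\mu}$ is $\widehat\bmu^0 \, x(\mathbf 0) = 1$, giving $\langle \Lambda | w\rangle = 1$. The substantive part is condition (2), namely $e_i |w\rangle = \mu_i |w\rangle$ for all $i \in [0,r]$. Here I would re-run the two-Lemma argument from Section \ref{pathsec}: the sufficient condition analogous to \eqref{eigencondition} reads
\begin{equation*}
x(\bp) = \sum_{k=0}^{N} (\Lambda - p_k \mid \al_i)\, x(\bp_{k,i})
\end{equation*}
for every affine path $\bp = (p_0,\ldots,p_N) \in {\mathcal P}_{\widehat\beta}$, where $\bp_{k,i}$ is the augmented path of \eqref{augmented}. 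Using the difference equation $\wv(p_k + \al_i) - \wv(p_k) = (\Lambda - p_k \mid \al_i)$ from the affine analogue of Lemma \ref{weightlem}, the telescoping computation in the proof of Theorem \ref{construc} applies unchanged, index $i$ now ranging over $[0,r]$ instead of $[1,r]$.

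The genuinely new issue, and the one I expect to be the main obstacle, is the possible vanishing of the vertex weight $\wv(\widehat\beta)$ at non-zero vertices, flagged in Remark \ref{vaffrem}. In the finite case $\wv(p_j) = v(p_j) \neq 0$ for every vertex a path can visit, so the Boltzmann weight $x(\bp) = \prod_{j=1}^N 1/v(p_j)$ is well defined; in the affine non-critical case $\wv(\widehat\beta)$ can still be zero only when $\widehat\beta - \beta_0\delta = 0$, i.e. $\beta_i = a_i\beta_0$, and $k = -h^\vee$. Since we assume $k \neq -h^\vee$, Remark \ref{vaffrem} gives $\wv(\beta_0\delta) = (k+h^\vee)\beta_0 \neq 0$ and more generally $\wv(\widehat\beta) = (k+h^\vee)\beta_0 + v(\beta-\beta_0\theta)$, so I would argue that $x(\bp)$ is well defined precisely because the denominators never vanish off the critical level. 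This is exactly why the theorem statement inserts the qualifier ``at the non-zero vertices visited by the path'': one must confirm that at non-critical level no interior vertex forces a division by zero, so the product defining $x(\bp)$ makes literal sense.

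Finally I would address uniqueness. The defining linear system for the Whittaker coefficients $x(\bp)$ (equivalently, the recursion $\wv(\widehat\beta) Z_{\widehat\beta} = \sum_{i=0}^r Z_{\widehat\beta - \al_i}$ of \eqref{recaff} together with $Z_0 = 1$) determines the $Z_{\widehat\beta}$ uniquely by induction on the grading of $\widehat\beta$, provided $\wv(\widehat\beta) \neq 0$ at each step; non-criticality supplies exactly this, so the construction yields the unique Whittaker vector normalized by $\langle \Lambda | w \rangle = 1$. Assembling $|w\rangle_\Lambda^{\widehat\mu} = \sum_{\widehat\beta} \widehat\bmu^{\widehat\beta} |Z_{\widehat\beta}\rangle$ from these coefficients and invoking the telescoping identity then completes the verification of both Whittaker conditions, which is all the theorem asserts.
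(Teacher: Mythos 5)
Your proposal is correct and follows essentially the same route as the paper, whose proof of Theorem \ref{construcaff} consists of the single line ``The proof is identical to that of Theorem \ref{construc}.'' Your added care about the non-vanishing of $\wv(\widehat\beta)$ away from the critical level (the content of Remark \ref{vaffrem}) is exactly the point that justifies the paper's one-line reduction, so nothing is missing.
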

\begin{proof}
The proof is identical to that of Theorem \ref{construc}.
\end{proof}

\subsection{Whittaker function}

We use the notation
$\widehat \phi=\sum_{i=0}^r \phi_i \al_i^\vee $, while keeping the notation
$\phi=\sum_{i=1}^r \phi_i \al_i^\vee$ for the finite part, and $\nabla_\phi$ for the gradient defined in
\eqref{nabladef}.

As before, the Whittaker function is defined for non-critical level as
\begin{equation}\label{whittaff}
W_{\Lambda}^{\widehat\mu,\widehat\mu'} (\widehat\phi)={}_\Lambda^{\widehat\mu'}\langle 
w| e^{\sum_{i=0}^r\phi_ih_i} |w\rangle_{\Lambda}^{\widehat\mu}.
\end{equation}

\begin{thm}\label{witpaff}
The Whittaker function at non-critical level $k\neq -h^{\vee}$ is the following generating 
series for the partition functions of the 
affine path model:
\begin{equation}
W_\Lambda^{\widehat\mu,\widehat \mu'}(\widehat\phi)=\sum_{\widehat \beta\in 
\widehat{Q}_+} {\widehat\bnu}^{\widehat\beta} 
e^{(\Lambda-\widehat\beta| \widehat\phi)} Z_{\widehat\beta}
\end{equation}
where 
${\widehat \bnu}^{\widehat\beta}=\prod_{i=0}^r (d_i\mu_i\mu_i')^{\beta_i}$. 
\end{thm}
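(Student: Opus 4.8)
The plan is to repeat the proof of Theorem~\ref{witpa} essentially verbatim, with every finite object replaced by its affine counterpart, since the definition \eqref{whittaff} of the affine Whittaker function and the construction in Theorem~\ref{construcaff} are exact analogues of the finite data. The one ingredient that must first be put in place is the dual expansion of the Whittaker covector. Exactly as in the finite case, one associates to each path $\bp=(p_0,\dots,p_N)\in {\mathcal P}_{\widehat\beta}$ the bra $\langle\bp|=\langle\Lambda| e_{i_1}e_{i_2}\cdots e_{i_N}$, with $p_k-p_{k-1}=\al_{i_k}$, and applies Theorem~\ref{construcaff} to the restricted dual (i.e.\ to the opposite nilpotent $\widehat\n_+$) to obtain
\[
{}_\Lambda^{\widehat\mu'}\langle w| = \sum_{\widehat\beta\in\widehat{Q}_+} (\widehat\bmu')^{\widehat\beta}\,\langle Z_{\widehat\beta}|,
\qquad
\langle Z_{\widehat\beta}| = \sum_{\bp\in {\mathcal P}_{\widehat\beta}} x(\bp)\,\langle\bp|,
\]
with $\widehat\bmu'=(d_0\mu_0',\dots,d_r\mu_r')$. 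This is the same statement and proof as for $|w\rangle$, the Serre-relation issues being circumvented exactly as before because the paths are only a spanning set.

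Next I would carry out the three-step computation. First, compute the scalar product $\Sigma_\bp=\langle\bp|w\rangle_\Lambda^{\widehat\mu}$ for a path ending at $\widehat\beta$: writing $\langle\bp|=\langle\Lambda|e_{i_1}\cdots e_{i_N}$ and repeatedly using the eigenvector property $e_i|w\rangle=\mu_i|w\rangle$ from Theorem~\ref{construcaff}, one gets $\Sigma_\bp=\prod_{\ell=1}^N\mu_{i_\ell}\,\langle\Lambda|w\rangle=\prod_{i=0}^r\mu_i^{\beta_i}$, which depends only on the endpoint $\widehat\beta$ and not on the path. Second, insert $e^{\sum_{i=0}^r\phi_i h_i}$: since $\langle Z_{\widehat\beta}|$ has weight $\Lambda-\widehat\beta$ and the Cartan exponential is diagonal on weight spaces, it acts as the scalar $e^{(\Lambda-\widehat\beta\,|\,\widehat\phi)}$, using $h_i(\mu)=(\mu|\al_i^\vee)$ together with $\widehat\phi=\sum_i\phi_i\al_i^\vee$. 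Third, assemble the pieces: $\langle Z_{\widehat\beta}|w\rangle=\sum_{\bp}x(\bp)\Sigma_\bp=\big(\prod_i\mu_i^{\beta_i}\big)Z_{\widehat\beta}$, and multiplying by the dual weight $(\widehat\bmu')^{\widehat\beta}=\prod_i(d_i\mu_i')^{\beta_i}$ collects the coefficient into $\prod_i(d_i\mu_i\mu_i')^{\beta_i}=\widehat\bnu^{\widehat\beta}$, giving the claimed series.

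The computation itself is routine, so the only genuine point requiring attention, and the thing I would flag as the main obstacle, is well-definedness at non-critical level. Every Boltzmann weight $x(\bp)=\prod_k \wv(p_k)^{-1}$ is finite only if no visited vertex has $\wv(\widehat\beta)=0$; by Remark~\ref{vaffrem} such a vanishing occurs precisely when $k=-h^\vee$ and $\widehat\beta-\beta_0\delta=0$, so the hypothesis $k\neq -h^\vee$ is exactly what guarantees that the partition functions $Z_{\widehat\beta}$, the vectors $|Z_{\widehat\beta}\rangle$, and hence $|w\rangle_\Lambda^{\widehat\mu}$ and its dual are all well defined. I would also note that for each fixed $\widehat\beta$ the set ${\mathcal P}_{\widehat\beta}$ is finite, so $Z_{\widehat\beta}$ is a finite sum and the identity is a legitimate equality of formal series in the variables $e^{-\phi_i}$ (and in $\widehat\bnu$); questions of analytic convergence in the imaginary ($\delta$) direction are genuinely more delicate in the affine setting but are deferred, as the $\phi_i$ are treated formally here. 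With this caveat, the result follows just as in Theorem~\ref{witpa}.
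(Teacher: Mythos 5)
Your proposal is correct and follows essentially the same route as the paper, which simply states that the proof is identical to that of Theorem \ref{witpa}: expand the dual Whittaker covector over affine paths, evaluate $\Sigma_\bp=\prod_i\mu_i^{\beta_i}$ via the eigenvector property, and let the Cartan exponential act by the scalar $e^{(\Lambda-\widehat\beta|\widehat\phi)}$ on each weight space. Your additional observation that $k\neq-h^\vee$ is precisely what keeps the vertex weights $\wv(\widehat\beta)$ nonzero (Remark \ref{vaffrem}), hence the partition functions well defined, is a worthwhile clarification that the paper leaves implicit.
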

\begin{proof}
The proof is identical to that of Theorem \ref{witpa}.
\end{proof}
As before, we define the 
modified Whittaker function as:
\begin{equation}\label{modiaff}
{\widetilde W}_{\Lambda}^{\widehat\mu,\widehat\mu'} (\widehat\phi)= e^{\sum_{i=0}^r\phi_i}
\, W_{\Lambda}^{\widehat\mu,\widehat\mu'} (\widehat\phi) 
=\sum_{\widehat\beta\in \widehat Q_+} {\widehat\bnu}^{\widehat\beta} e^{(\Lambda+
\wrho -\widehat\beta| \widehat\phi)}Z_{\widehat\beta}.
\end{equation}

Like in the simple case, we have the following differential equation for the non-critical Whittaker function.

\begin{thm}\label{todafW}
The modified Whittaker function at non-critical level $k\neq -h^{\rm v}$ is a solution of a 
deformed $\widehat G$-type Toda differential equation:
\begin{equation}\label{deftod}
\widehat H\, {\widetilde W}_{\Lambda}^{\widehat\mu,\widehat\mu'} (\widehat\phi) =  
E_\lambda \, {\widetilde W}_{\Lambda}^{\widehat\mu,\widehat\mu'} (\widehat\phi) ,
\end{equation}
where the affine Hamiltonian operator $\widehat H$ is
\begin{equation}\label{hamilto}
\widehat{H}=-(k+h^\vee) \nu_0 \frac{\partial}{\partial \nu_0} + \frac{1}{2} \left( \nabla_\phi | 
\nabla_\phi\right) 
+\sum_{j=0}^r \nu_j e^{-( \al_j| \phi)},
\end{equation}
and its eigenvaue is the same as in the finite case:
\begin{equation}
E_\lambda=\frac{1}{2}(\lambda+\rho|\lambda+\rho).
\end{equation}
Here, $\nu_j=d_j\mu_j\mu_j'$ for $j\in [0,r]$.
\end{thm}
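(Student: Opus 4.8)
The plan is to mimic the finite-case proof of Theorem \ref{todaW} as closely as possible, tracking the one genuinely new feature: the appearance of the central charge $k$ through the null-root direction. First I would introduce the operator
$$
\widehat K = E_\lambda - \frac{1}{2}\left( \nabla_\phi | \nabla_\phi \right),
$$
and compute its action on the exponential $e(\widehat\phi) = e^{(\Lambda+\wrho - \widehat\beta|\widehat\phi)}$ appearing term-by-term in the series \eqref{modiaff}. The key subtlety is that $\nabla_\phi$ involves only the \emph{finite} derivatives $\partial/\partial\phi_i$ for $i\in[1,r]$, so the bilinear form $(\nabla_\phi|\nabla_\phi)$ sees only the finite part of the weight $\Lambda - \widehat\beta$. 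Using Remark \ref{vaffrem}, I would decompose $\widehat\beta = \beta_0\delta + (\beta - \beta_0\theta)$ and recall that $\wv(\widehat\beta) = (k+h^\vee)\beta_0 + v(\beta-\beta_0\theta)$, so that the full affine vertex weight splits into a term linear in $\beta_0$ (the central-charge contribution) and the genuinely finite weight $v(\beta-\beta_0\theta)$.

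The central computation is therefore to show that $\widehat K\, e(\widehat\phi) = v(\beta-\beta_0\theta)\, e(\widehat\phi)$, exactly as in Theorem \ref{todaW} but with the finite weight evaluated at $\beta-\beta_0\theta$; this follows because $(\nabla_\phi|\nabla_\phi)$ is blind to the $\beta_0\delta$ component of the weight, and because $(\delta|\al_i)=0$ for all finite $i$ by \eqref{pairingstwo}. The missing piece $(k+h^\vee)\beta_0$ of the full vertex weight $\wv(\widehat\beta)$ must be supplied by a different operator. This is exactly what the Euler-type operator $-(k+h^\vee)\,\nu_0\frac{\partial}{\partial\nu_0}$ does: since $\widehat\bnu^{\widehat\beta}$ carries the factor $\nu_0^{\beta_0}$, the operator $\nu_0\partial/\partial\nu_0$ acts on the $\widehat\beta$-term by multiplication by $\beta_0$, so that $-(k+h^\vee)\nu_0\partial_{\nu_0}$ contributes precisely $-(k+h^\vee)\beta_0$. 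Combining this with $\widehat K$ gives
$$
\left(\widehat K - (k+h^\vee)\nu_0\tfrac{\partial}{\partial\nu_0}\right) \text{acting on the $\widehat\beta$-term} = \wv(\widehat\beta)\, e(\widehat\phi),
$$
recovering the full affine vertex weight.

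The final step is to feed this into the affine recursion \eqref{recaff}, $\wv(\widehat\beta)Z_{\widehat\beta} = \sum_{i=0}^r Z_{\widehat\beta - \al_i}$, and reindex the sum exactly as in Theorem \ref{todaW}: shifting $\widehat\beta \mapsto \widehat\beta + \al_j$ turns each term into $\nu_j\, e^{-(\al_j|\phi)}$ times the original series, producing the potential term $\sum_{j=0}^r \nu_j e^{-(\al_j|\phi)}$. Rearranging all the pieces and moving $\widehat K$ and the Euler operator to one side yields $\widehat H\,\widetilde W = E_\lambda\,\widetilde W$ with $\widehat H$ as in \eqref{hamilto}. I expect the main obstacle to be purely bookkeeping: keeping the finite projection of $\Lambda-\widehat\beta$ straight in the $(\nabla_\phi|\nabla_\phi)$ computation (so that $E_\lambda$ comes out with only the finite $(\lambda+\rho|\lambda+\rho)$ and no spurious central contribution), and verifying that the reindexing in the $\al_0$ direction is consistent with the decomposition of Remark \ref{vaffrem}.
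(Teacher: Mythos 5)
Your proposal follows the paper's proof in all essentials; the only structural difference is cosmetic: the paper absorbs the Euler operator into the auxiliary operator from the start, setting $\widehat K = E_\lambda + (k+h^\vee)\nu_0\frac{\partial}{\partial\nu_0} - \frac12(\nabla_\phi|\nabla_\phi)$ and showing in one computation that $\widehat K\, e(\widehat\phi) = \wv(\widehat\beta)\, e(\widehat\phi)$ via the decomposition $\widehat\beta=\beta_0\delta+(\beta-\beta_0\theta)$ and Remark \ref{vaffrem}, whereas you split this into the finite kinetic piece and the Euler piece. Both of your key claims are right: since $(\Lambda_0|\al_i^\vee)=(\delta|\al_i^\vee)=0$ for $i\in[1,r]$, the operator $(\nabla_\phi|\nabla_\phi)$ sees only the finite projection of $\Lambda+\wrho-\widehat\beta$, giving $\bigl(E_\lambda-\tfrac12(\nabla_\phi|\nabla_\phi)\bigr)e(\widehat\phi)=v(\beta-\beta_0\theta)\,e(\widehat\phi)$, and $\nu_0\partial_{\nu_0}$ multiplies the $\widehat\beta$-term by $\beta_0$. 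The one thing you must fix is a sign: since $\wv(\widehat\beta)=(k+h^\vee)\beta_0+v(\beta-\beta_0\theta)$, the combination that reproduces multiplication by $\wv(\widehat\beta)$ is $\widehat K + (k+h^\vee)\nu_0\frac{\partial}{\partial\nu_0}$ (with your $\widehat K$), not $\widehat K - (k+h^\vee)\nu_0\frac{\partial}{\partial\nu_0}$ as displayed. This is consistent with the minus sign in \eqref{hamilto} because
\begin{equation*}
\widehat H \;=\; E_\lambda\cdot\mathrm{id} \;-\;\Bigl(\widehat K + (k+h^\vee)\nu_0\tfrac{\partial}{\partial\nu_0}\Bigr)\;+\;\sum_{j=0}^r\nu_j e^{-(\al_j|\phi)},
\end{equation*}
so the Euler operator flips sign when moved from the $\widehat K$-side of the identity to the $\widehat H$-side. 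With that correction, feeding the recursion \eqref{recaff} into the series and reindexing $\widehat\beta\mapsto\widehat\beta+\al_j$ (including $j=0$) produces the potential term exactly as in the finite case, and the theorem follows.
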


\begin{proof}
Let
$e(\widehat \phi)=  {\widehat\bnu}^{\widehat\beta} e^{(\Lambda+\rho_{\rm 
aff} - \widehat\beta| \widehat \phi)}$, and
$\widehat K= E_\lambda+(k+h^{\rm v})\nu_0\frac{\partial}{\partial \nu_0}
-\frac{1}{2}(\nabla_\phi| \nabla_\phi)$. Then
\begin{eqnarray*}
\widehat{K} \,e(\widehat\phi) &=& \left( \frac{1}{2} (\lambda+\rho | \lambda+
\rho) + (k+h^\vee)\beta_0 \right. \\
&&\left. -\frac{1}{2} \sum_{i,j=1}^r (\omega_i|\omega_j)(\Lambda+\wrho - \widehat
\beta| \al_i^\vee) ( \Lambda+\wrho - \widehat\beta | \al_j^\vee)\right)e(\widehat\phi)\\
&=&\left(\frac{1}{2}(\lambda+\rho | \lambda+\rho) + (k+h^\vee) \beta_0 -\frac{1}{2}(\lambda+
\rho - \widehat\beta | \lambda + \rho -\widehat\beta)\right)e(\widehat\phi)\\
&=&\left((k+h^\vee)\beta_0 +(\lambda+\rho | \beta - \beta_0\theta) -\frac{1}{2}(\beta-
\beta_0\theta|\beta-\beta_0\theta)\right)e(\widehat\phi)\\
&=&\left((k+h^\vee)\beta_0+v(\beta-\beta_0\theta)\right)e(\widehat\phi, \widehat
\beta)=\wv (\widehat \beta)e(\widehat\phi),
\end{eqnarray*}
where we substituted $\widehat \beta=\beta_0\delta+(\beta-\beta_0\theta)$, and finally 
used Remark \ref{vaffrem}.

Acting on the Whittaker function and using the recursion relation \eqref{recaff},
\begin{eqnarray*}
\widehat K \, \widetilde{W}_\Lambda^{\widehat\mu,\widehat \mu'}(\widehat \phi)&=& 
\widehat K\, \sum_{\widehat \beta\in \widehat Q_+} e(\widehat\phi) 
Z_{\widehat \beta}
=\sum_{\widehat \beta\in \widehat{Q}_+} e(\widehat\phi) \wv(\widehat
\beta) Z_{\widehat\beta}\\
&=&\sum_{\widehat\beta\in \widehat{Q}_+} {\widehat\bnu}^{\widehat{\beta}} e^{(\Lambda+
\wrho-\widehat\beta|\widehat\phi)} \sum_{j=0}^r Z_{\widehat\beta - \al_j}
=\sum_{j=0}^r \sum_{\widehat\beta\in \widehat Q_+} {\widehat\bnu}^{\widehat\beta+\al_j} 
e^{(\Lambda+\wrho -\widehat\beta|\widehat\phi) - (\al_j|\widehat \phi)} 
Z_{\widehat{\beta}}\\
&=&\sum_{j=0}^r \nu_j e^{-(\al_j|\widehat \phi)} \widetilde{W}_\Lambda^{\widehat\mu, 
\widehat\mu'}(\widehat \phi).
\end{eqnarray*}
The theorem follows.
\end{proof}

\begin{remark}
We note that our Hamiltonian  $\widehat H$ of \eqref{hamilto} reduces to the critical affine Toda Hamiltonian computed by
Etingof \cite{Etingof}, when we set $k=-h^\vee$. 
We may therefore think of $\hat H$ as a deformation of the critical affine quantum Toda Hamiltonian.
\end{remark}

\begin{example}
In the case $\widehat G=\widehat A_r$, where $a_i^\vee=d_i=1$ for all $i$, 
The vertex weight function is given by
\begin{equation}\label{aaff}
v(\widehat\beta)=(k+r+1)\beta_0 +\sum_{i=1}^r (1+\lambda_i)
(\beta_i-\beta_0)-\frac{1}{2}\sum_{i,j=1}^r C_{i,j}(\beta_i-\beta_0)(\beta_j-\beta_0)
\end{equation}
and we get the following partial differential equation 
for the modified Whittaker function ${\widetilde W}(\widehat\phi):= {\widetilde W}
_{\Lambda}^{\widehat\mu,\widehat\mu'} (\widehat\phi)$, with $\nu_i=\mu_i\mu_i'$:
\begin{eqnarray}
&&\left\{(k+r+1)\nu_0\frac{\partial}{\partial \nu_0}+
\sum_{i,j=1}^r(C^{-1})_{i,j}\Big( \frac{(1+\lambda_i)(1+\lambda_j)}{2}-\frac{1}
{2}\frac{\partial^2}{\partial \phi_i\partial\phi_j}\Big)\right\}
{\widetilde W}(\widehat\phi)\nonumber \\
&&\qquad \quad \qquad\qquad \qquad\qquad\qquad \qquad \qquad\qquad \qquad \qquad 
= \left(\sum_{j=0}^r \nu_j\,e^{-\sum_{k=0}^r \widehat C_{j,k}\phi_k } \right) {\widetilde W}
(\widehat\phi).\label{todaff}
\end{eqnarray}

\end{example}

\section{The critical limit of the affine Whittaker function}
When the central element $c$ of the affine algebra $\widehat{\g}$ acts as a scalar multiple equal to $k=-h^\vee$, where $h^\vee$ is the dual coxeter number, the partition function  presented in the previous section is singular and needs to be treated separately.  
This case is called the critical limit. The model has a richer, integrable structure and the universal enveloping algebra of the affine algebra has an infinite-dimensional center, isomorphic to the classical (Poisson) deformed $\mathcal W$-algebras \cite{FrenResh}. In this limit, the Hamiltonian of the previous section becomes the affine Toda Hamiltonian \cite{Etingof}. The goal of this section is to study the behavior of the Whittaker function at the critical level. 

\subsection{The Whittaker function at critical level $k=-h^\vee$}

Let $\epsilon=k+h^\vee$, and consider the Whittaker function of Section \ref{noncritsec} when $\epsilon\to 0$.
Recall the expression for the Whittaker function ${\widetilde W}_\Lambda^{\widehat\mu,\widehat\mu'}(\widehat{\phi})$ of Theorem \ref{witpaff}:
$${\widetilde W}_\Lambda^{\widehat\mu,\widehat\mu'}(\widehat{\phi})=\sum_{\widehat \beta \in \widehat Q_+} Z_{\widehat \beta}\,{\widehat\bnu}^{\widehat\beta} 
e^{(\Lambda+\wrho - \widehat\beta | \widehat{\phi})}. $$
We separate out the terms proportional to $\epsilon$ in the exponential:
\begin{lemma}
\begin{equation}(\Lambda+\wrho-\widehat \beta | \widehat{\phi})=
\epsilon\phi_0+\left( \lambda+\rho-(\beta-\beta_0\theta)\vert(\phi-\phi_0 \theta)\right) .\label{identutile}
\end{equation}
\end{lemma}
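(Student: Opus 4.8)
The plan is to verify the identity \eqref{identutile} by direct expansion of the inner product on the left-hand side, using the decompositions of the affine weight $\Lambda$, the affine Weyl vector $\wrho$, and the affine root $\wbeta$ into their finite parts and their components along $\Lambda_0$ and $\delta$. First I would recall from the notations that $\Lambda = k\Lambda_0 + \lambda$, that $\wrho = h^\vee \Lambda_0 + \rho$, and that $\wbeta$ admits the decomposition from Remark \ref{vaffrem}, namely $\wbeta = \beta_0\delta + (\beta - \beta_0\theta)$. On the dual side, the variable $\widehat\phi = \sum_{i=0}^r \phi_i \al_i^\vee$ should be split into its $\phi_0$-component, which pairs with $\al_0^\vee$, and the finite part $\phi = \sum_{i=1}^r \phi_i\al_i^\vee$.

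The key computational steps are then to collect the coefficient of $\phi_0$ and the finite-part contribution separately. Combining $\Lambda + \wrho = (k+h^\vee)\Lambda_0 + (\lambda+\rho)$ and recognizing that $k + h^\vee = \epsilon$, the term $(\Lambda+\wrho \mid \widehat\phi)$ will produce the $\epsilon\phi_0$ piece through the pairing $(\Lambda_0 \mid \al_0^\vee)$ together with the bilinear forms in \eqref{pairingstwo}. The remaining terms must be organized so that the finite weight $\lambda+\rho - (\beta-\beta_0\theta)$ pairs against the shifted variable $\phi - \phi_0\theta$. Here I would make repeated use of the relations $(\delta \mid \al_i) = 0$, $(\delta\mid\delta)=0$, $(\delta\mid\Lambda_0)=1$, and the fact that $\wv$ restricted to $\wbeta - \beta_0\delta$ reduces to the finite pairing, exactly as exploited in Remark \ref{vaffrem}. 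The bookkeeping is essentially the same linear-algebra manipulation that underlies the reduction $\wv(\wbeta) = (k+h^\vee)\beta_0 + v(\beta - \beta_0\theta)$ already established there.

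The main obstacle, such as it is, will be keeping careful track of the cross terms arising from the appearance of $\theta$ on both the weight side (through $\beta - \beta_0\theta$) and the variable side (through $\phi - \phi_0\theta$): one must check that the $\phi_0\theta$ shift in the variable correctly absorbs the $\beta_0\theta$ shift in the weight, and that the coefficient of $\phi_0$ collapses cleanly to $\epsilon$ with no residual finite contribution. Since the left-hand side is linear in $\widehat\phi$ and the right-hand side is manifestly the sum of a $\phi_0$-term and a finite pairing, matching the two reduces to checking equality of the coefficient of $\phi_0$ and the coefficient of each $\phi_i$ for $i\geq 1$ separately. I expect both to follow immediately once the substitutions are made, so this lemma is genuinely a bookkeeping identity rather than a deep statement; its purpose is to isolate the $\epsilon\phi_0$ factor that will govern the essential singularity as $\epsilon\to 0$ in the subsequent critical-limit analysis.
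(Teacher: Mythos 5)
Your proposal is correct and follows essentially the same route as the paper: decompose $\Lambda+\wrho=(k+h^\vee)\Lambda_0+\lambda+\rho$, $\wbeta=\beta_0\delta+(\beta-\beta_0\theta)$, and $\widehat\phi=\phi_0\delta+(\phi-\phi_0\theta)$, expand bilinearly, and kill the cross terms with the relations \eqref{pairingstwo}. The only cosmetic difference is that you phrase the split of $\widehat\phi$ via its $\al_0^\vee$-component rather than writing $\phi_0\delta+(\phi-\phi_0\theta)$ directly, which is the same thing since $\al_0^\vee=\delta-\theta$.
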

\begin{proof}
Using
$ \Lambda+\wrho=(k+h^\vee)\Lambda_0 +\lambda+\rho$, 
$\widehat \beta=\beta_0\delta+(\beta-\beta_0\theta)$ and $\widehat{\phi}=\phi_0 \delta+(\phi-\phi_0 \theta)$, we have
\begin{eqnarray*}(\Lambda+\wrho-\widehat \beta | \widehat{\phi})&=&
\left( (k+h^\vee)\Lambda_0-\beta_0\delta+\lambda+\rho-(\beta-\beta_0\theta) \vert \phi_0 \delta+(\phi-\phi_0 \theta)\right)\\
&=& (k+h^\vee)\phi_0 (\Lambda_0|\delta) -\beta_0\phi_0 ( \delta |\delta) 
+\left(\lambda+\rho-(\beta-\beta_0\theta)|(\phi-\phi_0 \theta)\right).
\end{eqnarray*}
The Lemma follows using the relations \eqref{pairingstwo}.
\end{proof}

This motivates the definition of a renormalized Whittaker function:
\begin{defn}\label{renormW}
The renormalized Whittaker function is defined as
\begin{equation}\label{renorW}
{\widehat W}_\Lambda^{\widehat\mu,\widehat\mu'}(\widehat{\phi}):=e^{-\epsilon\phi_0}{\widetilde W}_\Lambda^{\widehat\mu,\widehat\mu'}(\widehat{\phi})=
\sum_{\widehat \beta \in \widehat Q_+} Z_{\widehat \beta}\,{\widehat \bnu}^{\widehat\beta} 
e^{\big(\lambda+\rho -(\beta-\beta_0\theta) \big| \phi- \phi_0 \theta\big)} 
\end{equation}
\end{defn}
This function depends on the variable $\widehat{\phi}$ only through the combination $\phi- \phi_0 \theta=\sum_{i=1}^r
(\phi_i-a_i^\vee \phi_0)\al_i^\vee$ and therefore satisfies the same eigenvalue equation \eqref{deftod} as 
${\widetilde W}_\Lambda^{\widehat\mu,\widehat\mu'}(\widehat{\phi})$. It is the unique solution (up to an overall factor independent of $\phi$)
with a series expansion of the form \eqref{renorW}.
This is convenient for computing the asymptotic expansion of the renormalized Whittaker function. It has an essential singularity at $\epsilon=0$ which is independent of the variable $\phi$, times a power series in $\epsilon$ whose terms depend on $\widehat\phi$. To get the coefficients, it is simplest to use the variables $\boldsymbol \nu$ as the expansion variables.

\begin{thm}
In the critical limit $\epsilon\to 0$, we have the following asymptotic expansion for the renormalized Whittaker function 
${\widehat W}_\Lambda^{\widehat\mu,\widehat\mu'}(\widehat{\phi})$ of Def.\ref{renormW}:
\begin{equation}\label{ansatz}
{\widehat W}_\Lambda^{\widehat\mu,\widehat\mu'}(\widehat{\phi})=e^{\frac{F}{\epsilon}}\left( W_0(\widehat\phi)+\epsilon W_1(\widehat\phi)+\cdots \right),
\end{equation}
where $F$ is a power series in the variable ${\widehat \bnu}^\delta=\prod_{i=0}^r(d_i\mu_i\mu_i')^{a_i^\vee}$ 
independent of $\widehat{\phi}$ of the form:
\begin{equation} \label{formF}
F=\sum_{\beta_0\in \Z_{>0}} {\widehat\bnu}^{\beta_0\delta} a_{\beta_0} 
\end{equation}
and the coefficients $\{W_j(\widehat\phi),\  j\geq 0\}$
are power series of the form: 
\begin{equation}\label{formWj}
W_j(\widehat\phi)=\sum_{\widehat\beta\in \widehat Q_+} 
{\widehat \bnu}^{\widehat \beta} \,e^{\big(\lambda+\rho-(\beta-\beta_0\theta)\big|(\phi-\phi_0 \theta)\big)}\, w_{j;\widehat\beta}
\end{equation} 
which have the properties:
\begin{enumerate}
\item $W_0$ is an eigenfunction of the (critical) affine Toda equation:
\begin{equation}\label{wzereq}
 {\widetilde H} W_0(\widehat\phi) ={\widetilde E}_\lambda \, W_0(\widehat\phi) ,\end{equation}
with 
\begin{equation} {\widetilde H}= \frac{1}{2} \left( \nabla_\phi | \nabla_\phi\right) +\sum_{j=0}^r \nu_j e^{-(\al_j | \widehat\phi)},
\qquad {\widetilde E}_\lambda= E_\lambda+ \nu_0\frac{\partial F}{\partial \nu_0}.  \end{equation}
\item For higher values of $j$, $W_j$ satisfy
\begin{equation}\label{wjeq}
 \left( {\widetilde H}-{\widetilde E}_\lambda\right) \, W_j(\widehat\phi)=\nu_0\frac{\partial W_{j-1}(\widehat\phi)}{\partial \nu_0} \qquad (j=1,2,...)
\end{equation}
\end{enumerate}
\end{thm}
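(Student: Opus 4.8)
The plan is to recast the non-critical eigenvalue equation of Theorem \ref{todafW} as a singular perturbation problem in $\epsilon=k+h^\vee$ and then extract the asymptotic expansion by a WKB-type substitution. First I would note that the renormalized function $\widehat W_\Lambda^{\widehat\mu,\widehat\mu'}$ inherits from $\widetilde W$ the eigenvalue equation $\widehat H\,\widehat W=E_\lambda\,\widehat W$ (as remarked just after Definition \ref{renormW}), and that the affine Hamiltonian splits as $\widehat H=-\epsilon\,\nu_0\frac{\partial}{\partial\nu_0}+\widetilde H$, where $\widetilde H=\widehat H\big|_{\epsilon=0}$ is precisely the critical affine Toda operator of the statement (this is the content of the remark following Theorem \ref{todafW}). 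Hence the equation becomes $(\widetilde H-E_\lambda)\,\widehat W=\epsilon\,\nu_0\frac{\partial}{\partial\nu_0}\widehat W$, a linear equation in which $\epsilon$ multiplies the Euler derivation $\nu_0\frac{\partial}{\partial\nu_0}$, which is exactly the form to which the ansatz \eqref{ansatz} applies.

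Next I would justify the form of the ansatz itself, which is the structural heart of the theorem. The only source of singular behaviour as $\epsilon\to0$ is the vanishing of the vertex weight: by Remark \ref{vaffrem}, $\wv(\widehat\beta)=\epsilon\,\beta_0+v(\beta-\beta_0\theta)$ vanishes at the critical vertices $\widehat\beta=\beta_0\delta$, where it equals $\epsilon\,\beta_0$. Through the recursion \eqref{recaff} each passage of a path through such a vertex contributes a factor $1/(\epsilon\,\beta_0)$, so the pure-$\delta$ partition functions $Z_{\beta_0\delta}$ acquire poles of order $\beta_0$ in $\epsilon$, while at these vertices $\beta-\beta_0\theta=0$ removes all $\phi$-dependence from the exponential weight in \eqref{renorW}. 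I would then argue that, because the Boltzmann weights factorize over the visited vertices, the sum over all multiple visits to the $\delta$-ray exponentiates in the standard linked-cluster fashion, producing a factor $e^{F/\epsilon}$ with $F=\sum_{\beta_0>0}\widehat\bnu^{\beta_0\delta}a_{\beta_0}$ independent of $\phi$, and leaving a remainder regular in $\epsilon$; this reproduces the shapes \eqref{formF} and \eqref{formWj}, the latter being inherited directly from the series \eqref{renorW}. The cleanest route to rigour is induction on the $\delta$-grading $\beta_0$, tracking the maximal pole order of $Z_{\widehat\beta}$ and peeling off $e^{F/\epsilon}$.

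With the ansatz in hand, the derivation of properties (1) and (2) is a routine matching of powers of $\epsilon$. Writing $\widehat W=e^{F/\epsilon}\,\Omega$ with $\Omega=\sum_{j\ge0}\epsilon^j W_j$, and using that $F$ is independent of $\phi$ so that $\widetilde H$ (a $\phi$-differential operator plus multiplication by the potential and by powers of $\nu$) commutes with multiplication by $e^{F/\epsilon}$, I compute $\epsilon\,\nu_0\frac{\partial}{\partial\nu_0}(e^{F/\epsilon}\Omega)=e^{F/\epsilon}\big((\nu_0\tfrac{\partial F}{\partial\nu_0})\,\Omega+\epsilon\,\nu_0\tfrac{\partial\Omega}{\partial\nu_0}\big)$. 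Cancelling the common factor $e^{F/\epsilon}$ turns the equation into $(\widetilde H-\widetilde E_\lambda)\,\Omega=\epsilon\,\nu_0\frac{\partial\Omega}{\partial\nu_0}$ with $\widetilde E_\lambda=E_\lambda+\nu_0\frac{\partial F}{\partial\nu_0}$, exactly the shifted eigenvalue of the statement. Expanding $\Omega$ and reading off the coefficient of $\epsilon^0$ gives $(\widetilde H-\widetilde E_\lambda)W_0=0$, which is \eqref{wzereq}, and the coefficient of $\epsilon^j$ for $j\ge1$ gives $(\widetilde H-\widetilde E_\lambda)W_j=\nu_0\frac{\partial W_{j-1}}{\partial\nu_0}$, which is \eqref{wjeq}.

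The algebraic matching of the last paragraph is elementary; the genuine obstacle is the second step, namely proving that the $\epsilon\to0$ singularity is captured exactly by a $\phi$-independent $e^{F/\epsilon}$ with $F$ a power series in the single variable $\widehat\bnu^\delta$ and the correction $\Omega$ genuinely regular of the form \eqref{formWj}. This requires controlling the full pole structure of every $Z_{\widehat\beta}$, not merely those on the $\delta$-ray, and showing that the off-ray poles are reproduced by the product $e^{F/\epsilon}W_0$ together with the subleading $W_j$. Equivalently, the coefficients $a_{\beta_0}$ of $F$ must be fixed, order by order in $\beta_0$, by the requirement that the tower \eqref{wjeq} be solvable, i.e.\ that no uncancelled poles (secular terms) survive in $\Omega$; verifying this solvability, using that $W_0$ lies in the kernel of $\widetilde H-\widetilde E_\lambda$ by \eqref{wzereq}, is where I expect the real work to lie.
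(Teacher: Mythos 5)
Your core derivation coincides with the paper's: substitute $\widehat W=e^{F/\epsilon}\,\overline W$ into the deformed affine Toda equation of Theorem \ref{todafW}, use that $F$ is $\phi$-independent so only the Euler derivation $\nu_0\partial_{\nu_0}$ sees the exponential prefactor, cancel $e^{F/\epsilon}$, and match powers of $\epsilon$ to obtain \eqref{wzereq} and \eqref{wjeq} with the shifted eigenvalue $\widetilde E_\lambda=E_\lambda+\nu_0\frac{\partial F}{\partial\nu_0}$. Where you diverge is in how the validity of the ansatz \eqref{ansatz} itself is addressed. The paper does not attempt the pole-structure analysis you sketch (tracking the order of the $\epsilon$-poles of $Z_{\widehat\beta}$ along the $\delta$-ray and exponentiating the diagonal contributions); it relegates that picture to an informal remark after the proof, and instead closes the argument by observing that the hierarchy (\ref{wzereq})--(\ref{wjeq}), solved as power series of the form (\ref{formF})--(\ref{formWj}), has a unique solution up to an overall $\phi$-independent constant $C=\sum_m\widehat\bnu^{m\delta}c_m$ (fixed later by a normalization such as $w_{0;\beta_0\delta}=0$ for $\beta_0>0$), so that the asymptotic expansion is characterized by the equations it satisfies. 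Your honest flagging of the ``real work'' --- proving that the off-ray poles of $Z_{\widehat\beta}$ are exactly absorbed by $e^{F/\epsilon}W_0$ and that the $a_{\beta_0}$ are determined by solvability of the tower --- identifies precisely the analytic content that the paper's uniqueness argument finesses rather than establishes; in the paper this determination of the $a_m$ appears only afterwards, through the explicit recursion \eqref{recucrit}. So your route would, if completed, be strictly stronger, but as written it leaves the same gap open that the paper's formal argument does, and the part you do complete is the same computation.
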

\begin{proof}
Substituting the form \eqref{ansatz} into the deformed affine Toda equation \eqref{deftod}, 
and writing ${\widehat W}_\Lambda^{\widehat\mu,\widehat\mu'}(\widehat{\phi})=e^{\frac{F}{\epsilon}} {\overline W}(\widehat{\phi})$,
we get:
$$ -\nu_0\frac{\partial F}{\partial \nu_0} {\overline W}(\widehat{\phi})-\epsilon \nu_0\frac{\partial {\overline W}(\widehat{\phi})}{\partial \nu_0}
+{\widetilde H} {\overline W}(\widehat{\phi})=E_\lambda \, {\overline W}(\widehat{\phi})$$
Let us collect terms in this identity order by order in $\epsilon$. This gives:
\begin{eqnarray*}
\epsilon^0:&& -\nu_0\frac{\partial F}{\partial \nu_0}\,W_0(\widehat\phi)  +\widetilde{H} \,W_0(\widehat\phi) = E_\lambda \, W_0(\widehat\phi) \\
\epsilon^j:&& -\nu_0\frac{\partial F}{\partial \nu_0} W_j(\widehat\phi)
-\nu_0\frac{\partial W_{j-1}(\widehat\phi)}{\partial \nu_0}+\widetilde{H}\, W_j(\widehat\phi) = E_\lambda \, W_j(\widehat\phi)
\end{eqnarray*} 
which respectively boil down to \eqref{wzereq} and \eqref{wjeq}.

We claim uniqueness of the solutions obtained in this way using the following considerations. Equation \eqref{wzereq} fixes 
$W_0(\widehat\phi)$ up to an overall constant $C$, which may possibly a power series of $\bnu^\delta$, but is
independent of $\phi$. It also fixes the eigenvalue ${\widetilde E}_\lambda=E_\lambda+\nu_0\frac{\partial F}{\partial \nu_0}$ as a power series
of $\bnu^\delta$ equal to $E_\lambda$ at $\bnu=0$. This function is our main interest, although we may also use $W_0(\widehat\phi)$ to solve, using
\eqref{wjeq}, for the functions $W_j(\widehat\phi)$ as power series. The solution of (\ref{wzereq}-\ref{wjeq}) as power series (\ref{formF}-\ref{formWj})
is therefore unique up to the overall multiplicative constant $C$, and the Theorem follows.
\end{proof}

The essential singularity arises from the singularity of the Boltzmann weights at points in the root lattice proportional to $\delta$, when $\epsilon\to 0$ (see Remark \ref{vaffrem}). The factor $e^{\frac{F}{\epsilon}}$ is a result of a 
re-summation of contributions 
from the paths that cross this ``diagonal" $\{\widehat\beta =\beta_0 \delta: \beta_0\in\N\}$, where they pick up a divergent contribution
of the form $\frac{1}{\beta_0\epsilon}$.

Let us now write the recursion relations resulting from (\ref{wzereq}-\ref{wjeq}) explicitly.
Substituting (\ref{formF}-\ref{formWj}) into the equations \eqref{wzereq} and \eqref{wjeq}, we get the following recursion relations:
\begin{equation}\label{recucrit}
v(\beta-\beta_0\theta)w_{j;\widehat\beta} +  \sum_{m\geq 1} m a_{m}w_{j;\widehat\beta-m\delta} =-\beta_0 w_{j-1;\widehat\beta}
+\sum_{i=0}^rw_{j;\widehat\beta-\al_i}, \quad j\geq 0
\end{equation}
using the conventions that $w_{-1;\widehat\beta}=0$, $w_{j;\widehat\gamma}=0$ if 
$\widehat\gamma\not\in \widehat Q_+$, 
and $w_{0,0}=1$.

As explained above, this recursion determines the coefficients $a_m$, $m\in \Z_{>0}$ entirely as rational functions of the $\lambda_i$'s,
as well as all $w_{j;\widehat\beta}$. However 
$w_{0,\widehat\beta}$ is only determined up to an overall multiplicative constant $C$, 
which may be
an arbitrary series expansion of the form $C=\sum_{m\in\Z_+} {\widehat\bnu}^{m\delta} c_m $. This constant is fixed by taking the
limit $\epsilon\to 0$ of the non-critical Whittaker function. 
We may normalize the function by requiring that 
all diagonal coefficients in $W_0$ be zero, except for $w_{0;0}=1$, namely $w_{0;\beta_0\delta}=0$ for all $\beta_0>0$. 

This fixes uniquely the solution of the recursion relations \eqref{recucrit}. With this choice,
when $\widehat\beta=m\delta$ and $j=0$, we get
$$ a_m=\frac{1}{m} \sum_{i=0}^r w_{0;m\delta-\al_i},$$
which determines the function $F$ in terms of the near-diagonal coefficients $w_{0;m\delta-\al_i}$. Subsequently,
when $j=0$ and $\widehat\beta\neq \beta_0 \delta$ we have a recursion
$$ w_{0;\widehat\beta} =\frac{1}{v(\beta-\beta_0\theta)}\left\{\sum_{i=0}^rw_{0;\widehat\beta-\al_i} -\sum_{m\geq 1} \sum_{i=0}^r w_{0;m\delta-\al_i}w_{0;\widehat\beta-m\delta} \right\}. $$

\begin{remark}
Note that this recursion relation is no longer that of a path partition function with local weights, as the coefficient $w_{0;\widehat\beta}$
is expressed in terms of all the $w_{0;\widehat\beta-m\delta}$, $m\geq 1$. This may simply indicate that the path vectors 
$|\bp\rangle$ are no longer the ``right" spanning set in this case.
\end{remark}

\subsection{Example: $A_1^{(1)}$ at critical level $k=-2$}
Let us consider the case of $\widehat G=\widehat A_1$. Writing:
$$ F=\sum_{m\geq 0} (\nu_0\nu_1)^m a_m, \qquad W_j(\widehat\phi)=\sum_{\beta_0,\beta_1\geq 0} \nu_0^{\beta_0}\nu_1^{\beta_1} 
e^{(\lambda_1+1-2(\beta_1-\beta_0))(\phi_1-\phi_0)}  w_{j;\beta_0,\beta_1} $$
the recursion relation \eqref{recucrit} becomes:
$$ (\beta_1-\beta_0)(\lambda_1+1-\beta_1+\beta_0)w_{j;\beta_0,\beta_1} 
+\sum_{m=1}^{{\rm Min}(\beta_0,\beta_1)} m a_m w_{j;\beta_0-m,\beta_1-m}
=w_{j;\beta_0-1,\beta_1}+w_{j;\beta_0,\beta_1-1} - \beta_0 w_{j-1;\beta_0,\beta_1}.$$

When $j=0$, this gives
$$(\beta_1-\beta_0)(\lambda_1+1-\beta_1+\beta_0)w_{0;\beta_0,\beta_1}=w_{0;\beta_0-1,\beta_1}+w_{0;\beta_0,\beta_1-1} 
-\sum_{m=1}^{{\rm Min}(\beta_0,\beta_1)} ma_m w_{0;\beta_0-m,\beta_1-m}$$
The solution with $w_{0;m\delta}=0=w_{0;m,m}$ for all $m>0$ is such that
\begin{equation}\label{eqf} a_m=\frac{1}{m}\left(w_{0;m-1,m}+w_{0;m,m-1}\right) \qquad (m\geq 1).\end{equation}
and for all $n\neq p \in \Z_+$:
\begin{equation}\label{eqw} w_{0;n,p}=\frac{1}{(p-n)(\lambda_1+1-p+n)}\left\{w_{0;n-1,p}+w_{0;n,p-1} 
-\sum_{m=1}^{{\rm Min}(n,p)} ma_m w_{0;n-m,p-m}\right\}
\end{equation}
Substituting \eqref{eqf} into \eqref{eqw} yields the following recursive definition of the $w$'s for $n\neq p$:
\begin{eqnarray*}
w_{0;n,p}&=&\frac{1}{(p-n)(\lambda_1+1-p+n)}\times \\
&&\quad \times\left\{w_{0;n-1,p}+w_{0;n,p-1} 
-\sum_{m=1}^{{\rm Min}(n,p)} \left(w_{0;m-1,m}+w_{0;m,m-1}\right) w_{0;n-m,p-m}\right\}
\end{eqnarray*}
while $w_{0;n,n}=\delta_{n,0}$.
For instance, we find:
\begin{eqnarray*} w_{0;1,0}&=&-\frac{1}{\lambda_1+2}, \quad w_{0;0,1}=\frac{1}{\lambda_1} , \\
w_{0;2,0}&=&\frac{1}{2(\lambda_1+1)(\lambda_1+2)} ,\quad w_{0;1,1}=0,\quad w_{0;0,2}=\frac{1}{2(\lambda_1-1)(\lambda_1-2)}  ,\\
w_{0;3,0}&=&-\frac{1}{6(\lambda_1+1)(\lambda_1+2)(\lambda_1+3)} ,\quad w_{0;2,1}=-\frac{12 + 6\lambda_1+\lambda_1^2}{
 2 \lambda_1( \lambda_1+2)^3 (\lambda_1+3 )} ,\\
w_{0;1,2}&=&-\frac{4-2\lambda_1+\lambda_1^2}{2 (\lambda_1-1)\lambda_1^3 (\lambda_1+2 )} ,\quad w_{0;0,3}= 
\frac{1}{6(\lambda_1-1)(\lambda_1-2)(\lambda_1-3)}\ .
\end{eqnarray*}
We also get:
\begin{eqnarray*}a_1&=&\frac{2}{\lambda_1(\lambda_1+2)},\quad a_2=\frac{(12 + 10\lambda_1+ 5\lambda_1^2)}{\lambda_1^3 (\lambda_1-1)(\lambda_1+2)^3 (\lambda_1+3)}\\
a_3&=&\frac{16 (96 + 152 \lambda_1+ 112 \lambda_1^2 + 36 \lambda_1^3 + 9 \lambda_1^4)}{3\lambda_1^5(\lambda_1-1)(\lambda_1-2) 
(\lambda_1+2)^5 (\lambda_1+3) (\lambda_1+4)} \end{eqnarray*}

\section{Whittaker vectors and functions for $U_q(A_r)$}
Our path model solution for Whittaker vectors and functions can be extended in a straightforward manner to the case of the quantum algebra $U_q(\mathfrak{sl}_{r+1})$ for generic $q$. The combinatorial data, such as the paths, is the same, but there is a key difference in that the weights defined on the path are slightly less local: They depend both on the vertices visited by the paths and on the edges.

Let us supply the necessary definitions in this case. We have the algebra $U_q(A_r)$ with generators $\{E_i, F_i, K_i^{\pm 1}\}$ and relations
\begin{equation}
[E_i,F_j] = \delta_{i,j}\, \frac{K_i-K_i^{-1}}{q-q^{-1}} ,
\quad K_i E_j = q^{C_{j,i}} E_j K_i, \quad K_i F_j = q^{-C_{j,i}} F_j K_i \quad (i,j\in [1,r]) ,
\end{equation}
together with the Serre relations
\begin{eqnarray*}
\begin{array}{c}
 E_i^2 E_j - (q+q^{-1})E_i E_j E_i + E_j E_i^2 = 0,\\
 F_i^2 F_j - (q+q^{-1})F_i F_j F_i + F_j F_i^2 = 0, 
 \end{array} \qquad j=i\pm1.
\end{eqnarray*}

The combinatorial data of roots, weights and the Cartan matrix 
are the same as for finite-type $\g=A_r$. Combinatorially, the irreducible Verma modules are the same as in the non $q$-deformed case. We use the same notation as in the case of finite Lie algebras, with
irreducible highest weight Verma modules $V_{\lambda}=U\big(\{F_i\}_{i\in [1,r]}\big) \, |\lambda\rangle$ 
with highest weight vector $|\lambda \rangle$ and generic weight $\lambda$,
such that $E_i\, |\lambda\rangle=0$  and $K_i\, |\lambda\rangle= q^{\lambda_i}\, |\lambda\rangle$ for all $i\in [1,r]$. 
Again, we have the restricted dual module $V_\lambda^*=\langle \lambda| \, U\big(\{E_i\}_{i\in [1,r]}\big)$ with left highest weight
vector $\langle \lambda|$ such that $\langle \lambda| F_i =0$,  
$\langle \lambda|K_i=q^{\lambda_i}\,  \langle \lambda\vert$ for all $i$ and $\langle \lambda|\lambda\rangle =1$.

\subsection{The q-Whittaker vector: Definition}

\begin{defn}
The Whittaker vector for a highest weight module $V_{\lambda}$ and nilpotent character $\mu=(\mu_1,...,\mu_r)$
is defined to be the unique element $|w\rangle$ in the completion of $V_{\lambda}$ such that $\langle \lambda |w\rangle =1$ and:
\begin{equation}\label{defcon}
E_i\, |w\rangle =\mu_i \, (K_i)^{i-1} |w\rangle \qquad (i\in [1,r]).
\end{equation}
\end{defn}

\begin{remark} The factors $(K_i)^{i-1}$ appear due to reasons of compatibility with the Serre relations, which should act by 0 on the Whittaker vector.
One may use more general defining relations than \eqref{defcon}, 
by picking any set of integers $m_1,...,m_r$ on the nodes of the Dynkin diagram of $A_r$ with the property
 $|m_{i+1}-m_i|=1$ and replacing \eqref{defcon} with $E_i\, |w\rangle =\mu_i \, (K_i)^{m_i} |w\rangle$ for all $i$.
Here we chose to use $m_i=i-1$ for all $i$, as in e.g. \cite{Feigin}.
Compatibility with the Serre relations is a consequence of the $q$-binomial identity
$1-q^\epsilon (q+q^{-1})+q^{2\epsilon} =0$, where $\epsilon=\pm 1$, since
\begin{eqnarray*} &&\left(E_iE_{i+1}^2 -(q+q^{-1})E_{i+1}E_iE_{i+1}+E_{i+1}^2E_i\right) |w\rangle\\
&&\quad =\left(1-(q+q^{-1}) q^{m_i-m_{i+1}}+ q^{2(m_i-m_{i+1})}\right)\mu_i\mu_{i+1}^2\, K_i^{m_i}K_{i+1}^{2m_{i+1}}|w\rangle=0.
\end{eqnarray*}
\end{remark}

We look for an expression for $|w\rangle$ as a linear combination 
of non-independent vectors indexed by the same paths on the root lattice as in the classical case. The bijection between paths and vectors in $V_\lambda$ is adjusted, as we use the generators 
\begin{equation}\label{newF} {\overline{F}}_i= F_i \, (K_i)^{i-1}, \end{equation}
which are better adapted to our definition of the Whittaker vector.

\begin{defn}\label{qmapvec}
Let  ${\mathcal P}_\beta$ be the set of paths $\bp=(p_0,...,p_N)$, with $p_0=0, p_N=\beta\in Q_+$,  and $p_k-p_{k-1}=\al_{i_k}$ for $k=1,...,N$. We define a map from $\mathcal P_\beta\to V_\lambda$ by:
\begin{equation}\label{qp}
|\bp\rangle ={\of}_{i_N}{\of}_{i_{N-1}}\cdots {\of}_{i_1}\, |v\rangle , \quad \bp\in \mathcal P_\beta.
\end{equation}
\end{defn}

As in the classical case, we look for the vector $|w\rangle$ of the form:
\begin{equation}\label{qwit}
|w\rangle=\sum_{\beta\in Q_+} {\boldsymbol \mu}^\beta \sum_{\bp \in {\mathcal P}_\beta} y(\bp)|\bp \rangle
\end{equation}
with $|\bp\rangle$ as in \eqref{qp} and some coefficients $y(\bp)$ which satisfy an appropriate recursion relation. 

Writing the defining condition \eqref{defcon} for $|w\rangle$ in the form 
\begin{equation}\label{newqwit}
{\oE}_i|w\rangle =\mu_i|w\rangle\end{equation} 
where 
\begin{equation}\label{newE}{\oE}_i=K_i^{-(i-1)}E_i\ ,
\end{equation} 
we have the recursion relation for the coefficients:
\begin{lemma}
A sufficient condition for $|w\rangle$ of \eqref{qwit} to satisfy the Whittaker vector conditions \eqref{defcon} 
is that  the coefficients $y(\bp)$ obey the following system:
\begin{equation} \label{overdet}
y(\bp)=\sum_{k=0}^N q^{\sum_{\ell=k+1}^N (i-{i_\ell})C_{i_\ell,i}}
\big[(\lambda-p_k\vert \al_i)\big]\, y(\bp_{k,i}) \qquad ( i\in [1,r])
\end{equation}
where we have used the notation $[x]$ for the $q$-number 
\begin{equation}\label{qnumber} [x]=\frac{q^x-q^{-x}}{q-q^{-1}}, \end{equation}
and $\bp_{k,i}$ for the ``augmented path" \eqref{augmented}. 
\end{lemma}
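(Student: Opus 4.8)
The plan is to mimic the proof of the classical Lemma leading to \eqref{eigencondition}, but carried out with the \emph{adapted} operators rather than the raw Chevalley generators. Concretely, I would work throughout with $\oE_i=K_i^{-(i-1)}E_i$ of \eqref{newE} and $\of_j=F_jK_j^{j-1}$ of \eqref{newF}, for which the defining relation is the clean eigenvalue equation $\oE_i|w\rangle=\mu_i|w\rangle$ of \eqref{newqwit} and the path vectors read $|\bp\rangle=\of_{i_N}\cdots\of_{i_1}|\lambda\rangle$. The whole argument then reduces to computing $\oE_i|w\rangle$ by commuting $\oE_i$ to the right through each such string and matching coefficients.

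The key preliminary step is to establish the two commutation relations obeyed by $\oE_i$ and $\of_j$. A direct manipulation of the $U_q(A_r)$ relations, tracking the $K$-conjugations and using that $C$ is symmetric in type $A$, gives for $i\neq j$
\[
\oE_i\,\of_j=q^{(i-j)C_{i,j}}\,\of_j\,\oE_i ,
\]
while for $i=j$ the two powers $K_i^{\pm(i-1)}$ cancel against each other and one recovers the undeformed-looking bracket
\[
\oE_i\,\of_i=\of_i\,\oE_i+\frac{K_i-K_i^{-1}}{q-q^{-1}} .
\]
Note that the ``pass-through'' factor at a repeated index $j=i$ is $q^0=1$, which is precisely $q^{(i-j)C_{i,j}}$ evaluated at $j=i$; this is what allows a single uniform exponent to cover both cases in the final formula.

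Next I would compute $\oE_i|\bp\rangle$ by carrying $\oE_i$ rightward through $\of_{i_N}\cdots\of_{i_1}$ until it reaches $|\lambda\rangle$, where it vanishes since $\oE_i|\lambda\rangle=K_i^{-(i-1)}E_i|\lambda\rangle=0$. Crossing $\of_{i_m}$ accumulates a factor $q^{(i-i_m)C_{i_m,i}}$, and the only surviving terms come from the commutator $\tfrac{K_i-K_i^{-1}}{q-q^{-1}}$ produced at each position $\ell$ with $i_\ell=i$; acting on the weight-$(\lambda-p_{\ell-1})$ vector to its right it evaluates to the $q$-number $[(\lambda-p_{\ell-1}|\al_i)]$ (using $d_i=1$, so $\al_i=\al_i^\vee$). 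Deleting the $\ell$-th step yields a path $\bp^{(\ell)}$, and
\[
\oE_i|\bp\rangle=\sum_{\ell:\,i_\ell=i}\Big(\prod_{m=\ell+1}^N q^{(i-i_m)C_{i_m,i}}\Big)\,[(\lambda-p_{\ell-1}|\al_i)]\,|\bp^{(\ell)}\rangle .
\]
Substituting into $\oE_i|w\rangle$ and reorganizing the double sum according to the \emph{resulting} path $\bp'$, one writes each acted-on path as $\bp'_{k,i}$ with $k=\ell-1$ (its deleted step being the step inserted by the augmentation \eqref{augmented}). Using $\bmu^{\al_i}=\mu_i$, the coefficient of a fixed $|\bp'\rangle$ in $\oE_i|w\rangle$ equals $\mu_i\bmu^{\beta'}$ times the right-hand side of \eqref{overdet} written for $\bp'$; matching it against the coefficient $\mu_i\bmu^{\beta'}y(\bp')$ of $|\bp'\rangle$ in $\mu_i|w\rangle$ gives \eqref{overdet}. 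Because the $|\bp'\rangle$ are not linearly independent, this is a sufficient but not necessary condition, which is exactly the claim.

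The main obstacle I anticipate is not conceptual but combinatorial: correctly accounting for the accumulated $q$-powers from the $K$-conjugations as $\oE_i$ is carried past each $\of_{i_m}$, and in particular verifying that the pass-through contribution at a repeated index $i_m=i$ is trivial so that the single exponent $\sum_{\ell=k+1}^N(i-i_\ell)C_{i_\ell,i}$ captures all of them uniformly. Once the two commutation relations are in hand and the index bookkeeping (especially the $\ell\leftrightarrow k+1$ shift relating deletion to augmentation) is handled carefully, the remainder of the argument runs in exact parallel with the classical case.
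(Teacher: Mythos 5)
Your proposal is correct and follows essentially the same route as the paper: establish the commutation relation $\oE_i\,\of_j=\delta_{i,j}\frac{K_i-K_i^{-1}}{q-q^{-1}}+q^{(i-j)C_{j,i}}\of_j\,\oE_i$ (which you state in its two cases and correctly observe unify under a single exponent), push $\oE_i$ through the path monomials collecting $q$-factors and $q$-number commutator terms, reorganize by the resulting path, and match coefficients while noting the condition is only sufficient due to the linear dependence of the $|\bp\rangle$.
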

\begin{proof}
Use \eqref{newqwit}. We first compute ${\oE}_i|w\rangle$ by use of the commutation relation
$$ {\oE}_i \,{\of}_j=\delta_{i,j}\frac{K_i-K_i^{-1}}{q-q^{-1}} +q^{(i-j)C_{j,i}} {\of}_j\, {\oE}_i$$
Since ${\oE}_i \,|\bp\rangle=0$ if the path $\bp$ has no step $\al_i$, we may write
\begin{eqnarray*}
{\oE}_i \,|w\rangle&=& \sum_{\beta\in \al_i+Q_+}\bmu^{\beta-\al_i} \mu_i 
\sum_{\bp\in {\mathcal P}_\beta\atop p=(p_0,...,p_{N+1})}
y(\bp)\sum_{k=0}^N q^{\sum_{\ell=k+1}^N (i-{i_\ell})C_{i_\ell,i}}\delta_{p_{k+1}-p_k,\al_i}\, \delta_{\bp'_{k,i},\bp}\, \big[(\lambda-p_k|\al_i)\big]\, 
|\bp'\rangle\\
&=& \mu_i \sum_{\beta'\in Q_+}\bmu^{\beta'}\sum_{\bp'\in {\mathcal P}_{\beta'}\atop \bp'=(p_0',...,p_N')} \sum_{k=0}^N 
 q^{\sum_{\ell=k+1}^N (i-{i_\ell})C_{i_\ell,i}} y(\bp'_{k,i})\, \big[ (\lambda-p_k|\al_i)\big] \, |\bp'\rangle\\
\mu_i |w\rangle&=&\mu_i\sum_{\beta\in Q_+}\bmu^{\beta}\sum_{\bp\in {\mathcal P}_\beta} y(\bp)\, |\bp\rangle 
\end{eqnarray*}
and the Lemma follows by identifying the coefficients of $|\bp\rangle$ in the last two lines.
\end{proof}


We will give below a solution $y(\bp)=x_q(\bp)$ of the system \eqref{overdet}, 
which has the property that $x_q(\bp)$ is a product of local weights over the path $\bp$.

A path model is an assignment of weights to a set of paths. In this case we need to define 
edge weights, rather than vertex weights as in the classical case. The analogy in the continuum is that the weight depends not just on the local position on the path but also on the first derivative.
\begin{defn}\label{edgp}
An edge-weight path model on $Q_+$ is a map which assigns a weight to each edge of the path. Let $\bp$ be a path as before, with $p_{i}-p_{i-1}\in \Pi$ for all $i$. 
An edge weight $v(p_{i-1},p_i)$ is a function of $p_i$ and of $p_i-p_{i-1}$.
The weight of a path is the product of the edges traversed by the paths, $y(\bp)$:
$$ y(\bp)=\prod_{i=1}^N \frac{1}{v(p_{i-1},p_i)} $$
and the partition function is defined as before by
$$ Z_{\beta}=\sum_{\bp\in {\mathcal P}_\beta} y(\bp) $$
\end{defn}

\begin{lemma}\label{qrec}
The partition function for an edge-weight path model satisfies the following recursion relation:
\begin{equation}
Z_{\beta}=\sum_{i=1}^r \frac{1}{v(\beta-\al_i,\beta)} Z_{\beta-\al_i}
\end{equation}
\end{lemma}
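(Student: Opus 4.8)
The plan is to prove this recursion by the standard device of classifying paths according to their final edge. A path $\bp=(p_0,\ldots,p_N)\in\mathcal P_\beta$ ends at $p_N=\beta$ and has last step $p_N-p_{N-1}=\al_i$ for a unique index $i\in[1,r]$; hence its penultimate vertex is necessarily $p_{N-1}=\beta-\al_i$. Deleting the final vertex produces a path $\bp'=(p_0,\ldots,p_{N-1})\in\mathcal P_{\beta-\al_i}$, and I would first check that this deletion is a bijection between $\mathcal P_\beta$ and the disjoint union $\bigsqcup_{i:\,\beta-\al_i\in Q_+}\mathcal P_{\beta-\al_i}$: the inverse simply appends the step $\al_i$ reaching $\beta$, and the two operations are mutually inverse. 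One must allow the convention $Z_\gamma=0$ whenever $\gamma\notin Q_+$, so that the terms with $\beta-\al_i$ outside the positive cone drop out of the sum automatically.

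The key observation, which distinguishes this from the vertex-weight case of \eqref{recur}, is the precise dependence of the edge weight. By Definition \ref{edgp} the final edge weight $v(p_{N-1},p_N)=v(\beta-\al_i,\beta)$ is a function of the endpoint $\beta$ and of the step $p_N-p_{N-1}=\al_i$ only, and in particular does not depend on the earlier portion $\bp'$ of the path. Consequently the path weight factorizes as
$$
y(\bp)=\prod_{j=1}^{N}\frac{1}{v(p_{j-1},p_j)}
=\frac{1}{v(\beta-\al_i,\beta)}\prod_{j=1}^{N-1}\frac{1}{v(p_{j-1},p_j)}
=\frac{1}{v(\beta-\al_i,\beta)}\,y(\bp').
$$
Because the prefactor $1/v(\beta-\al_i,\beta)$ is constant over all of $\mathcal P_{\beta-\al_i}$ (every such path is continued by the same final edge), it can be pulled out of the inner sum.

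Substituting the factorization into the definition of $Z_\beta$ and reorganizing the sum via the bijection above then yields
$$
Z_\beta=\sum_{\bp\in\mathcal P_\beta}y(\bp)
=\sum_{i=1}^r\frac{1}{v(\beta-\al_i,\beta)}\sum_{\bp'\in\mathcal P_{\beta-\al_i}}y(\bp')
=\sum_{i=1}^r\frac{1}{v(\beta-\al_i,\beta)}\,Z_{\beta-\al_i},
$$
which is the asserted recursion. I do not expect any genuine obstacle here: the statement is a purely combinatorial bookkeeping identity, and the only point requiring care is the factorization of the final-edge weight, namely that $v(\beta-\al_i,\beta)$ depends solely on the endpoint and direction and therefore commutes past the summation over the remaining paths.
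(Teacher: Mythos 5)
Your proof is correct, and it is exactly the standard last-step decomposition that the paper has in mind: the paper states Lemma \ref{qrec} without proof (as it does for the vertex-weight analogue \eqref{recur}), and your argument — the bijection given by deleting the final edge, together with the observation that the final edge weight $v(\beta-\al_i,\beta)$ depends only on the endpoint and the step direction and so factors out of the sum over $\mathcal P_{\beta-\al_i}$ — supplies precisely the bookkeeping the authors leave implicit. Nothing to correct.
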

This is to be compared the to the equation satisfied by the partition function with vertex weights Equation \eqref{recur}.

Given the path model, we can define a vector partition function using the map from $\mathcal P_\beta$ to $V_\lambda$ of Definition \ref{qmapvec},
\begin{equation} \label{parvec}
|Z_{\beta}\rangle=\sum_{\bp\in {\mathcal P}_\beta} y(\bp)\, |\bp\rangle .
\end{equation}
We will use this definition below.

\subsection{Quantum Whittaker vectors}
Local edge weights will be defined using the following function.
\begin{defn}
Given an $\mathfrak{sl}_{r+1}$ weight $\lambda$ and an element $\beta$ of the positive root lattice $Q_+$, let
\begin{equation}\label{qweightN}
v_q(\beta)=\frac{1}{(q-q^{-1})^2}\sum_{i=0}^r q^{2(\lambda+\rho|\omega_i-\omega_{i+1})}(1-q^{2(\beta|\omega_{i+1}-\omega_i)})
\end{equation}
with the convention that $\omega_0=\omega_{r+1}=0$.
\end{defn}

In components we may rewrite \eqref{qweightN} as:
\begin{equation}{\bar v}_q(\beta_1,...,\beta_r)
=\frac{1}{(q-q^{-1})^2}\sum_{i=0}^r q^{2(\gamma_i-\gamma_{i+1})}(1-q^{2(\beta_{i+1}-\beta_i)})
\end{equation}
where 
\begin{equation}\label{gammadef}
\gamma_i=\sum_j (C^{-1})_{i,j}(\lambda_j+1)=\frac{i(r+1-i)}{2}
+\frac{1}{r+1}\left\{(r+1-i)\sum_{j=1}^i j\lambda_j+i\sum_{j=i+1}^r (r+1-j)\lambda_j\right\} ,
\end{equation}
and $\lambda_i=(\lambda|\alpha_i^\vee)$.

\begin{lemma}\label{wdif}
The function $v_q(\beta)$ \eqref{qweightN} satisfies the difference equations:
\begin{equation} 
v_q(\beta+\al_i)-v_q(\beta)=q^{(\lambda+\rho-\beta| \omega_{i-1}-\omega_{i+1})} \,  \Big[(\lambda-\beta|\al_i)\Big]
\qquad (i=1,2...,r)
\end{equation}
\end{lemma}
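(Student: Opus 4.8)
The plan is to exploit the fact that $v_q(\beta)$ depends on $\beta$ only through the $r+1$ scalars $s_j:=(\beta\,|\,\omega_{j+1}-\omega_j)$, $j=0,\dots,r$ (with the convention $\omega_0=\omega_{r+1}=0$), and that adding a simple root shifts only two of them. Since $\g=A_r$ has $d_i=1$, the pairing satisfies $(\al_i|\omega_j)=(\omega_j|\al_i^\vee)=\delta_{i,j}$, so
\begin{equation*}
(\al_i\,|\,\omega_{j+1}-\omega_j)=\delta_{i,j+1}-\delta_{i,j}=\delta_{j,i-1}-\delta_{j,i}.
\end{equation*}
Hence, under $\beta\mapsto\beta+\al_i$, the scalar $s_{i-1}$ increases by $1$, $s_i$ decreases by $1$, and every other $s_j$ is unchanged. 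This is the structural observation that makes the whole computation collapse.

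First I would substitute this into the defining sum. Writing $c_j:=(\lambda+\rho\,|\,\omega_j-\omega_{j+1})$, only the $j=i-1$ and $j=i$ summands survive the subtraction $v_q(\beta+\al_i)-v_q(\beta)$, and each surviving bracket telescopes, giving
\begin{equation*}
(q-q^{-1})^2\big(v_q(\beta+\al_i)-v_q(\beta)\big)=q^{2c_{i-1}+2s_{i-1}}(1-q^{2})+q^{2c_i+2s_i-2}(q^{2}-1).
\end{equation*}
The next step is to recombine using the abbreviation $E_j:=c_j+s_j=(\lambda+\rho-\beta\,|\,\omega_j-\omega_{j+1})$, which turns the right-hand side into $(q^2-1)\big(q^{2E_i-2}-q^{2E_{i-1}}\big)$; dividing by $(q-q^{-1})^2=(q^2-1)^2/q^2$ then yields $\tfrac{1}{q^2-1}\big(q^{2E_i}-q^{2E_{i-1}+2}\big)$.

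Finally I would expand the target side and check it agrees. Using $\omega_{i-1}-\omega_{i+1}=(\omega_{i-1}-\omega_i)+(\omega_i-\omega_{i+1})$ gives $(\lambda+\rho-\beta\,|\,\omega_{i-1}-\omega_{i+1})=E_{i-1}+E_i$, while $\al_i=2\omega_i-\omega_{i-1}-\omega_{i+1}$ gives $(\lambda+\rho-\beta\,|\,\al_i)=E_i-E_{i-1}$, and therefore $(\lambda-\beta\,|\,\al_i)=E_i-E_{i-1}-1$ by $(\rho|\al_i)=d_i=1$. Substituting these into $q^{(\lambda+\rho-\beta|\omega_{i-1}-\omega_{i+1})}\big[(\lambda-\beta|\al_i)\big]$, unwinding the $q$-number \eqref{qnumber}, and using $q-q^{-1}=(q^2-1)/q$ reproduces exactly $\tfrac{1}{q^2-1}\big(q^{2E_i}-q^{2E_{i-1}+2}\big)$, matching the left-hand side.

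The computation is entirely elementary, so the only real work is bookkeeping. The step I expect to be the genuine obstacle is the boundary cases $i=1$ and $i=r$, where the indices $i-1=0$ and $i+1=r+1$ invoke $\omega_0=\omega_{r+1}=0$; I would verify that this convention is precisely what forces $E_0=-(\lambda+\rho-\beta\,|\,\omega_1)$ and $E_r=(\lambda+\rho-\beta\,|\,\omega_r)$ to obey the same formulas as the interior indices, so that no separate endpoint argument is needed. A secondary point requiring care is keeping the signs straight when the two surviving brackets collapse, since the $+1$ shift of $s_{i-1}$ and the $-1$ shift of $s_i$ enter asymmetrically and produce the compensating factors $(1-q^2)$ and $(q^2-1)$.
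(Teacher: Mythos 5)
Your proof is correct and follows essentially the same route as the paper's: isolate the two summands ($j=i-1$ and $j=i$) that change under $\beta\mapsto\beta+\al_i$, then recombine using $\al_i=2\omega_i-\omega_{i-1}-\omega_{i+1}$ and $(\rho|\al_i)=1$; your $E_j$ bookkeeping and the endpoint remark about $\omega_0=\omega_{r+1}=0$ are just a more explicit version of the same computation. (Your intermediate exponents are in fact the consistent ones -- the paper's displayed middle line carries shifts of $\pm 2$ where $\pm 1$ is needed, a typo that does not affect its conclusion.)
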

\begin{proof}
By direct computation:
\begin{eqnarray*}
v_q(\beta+\al_i)-v_q(\beta)&=& \frac{1}{(q-q^{-1})^2}\left( -q^{2(\lambda+\rho-\beta|\omega_i-\omega_{i+1})}(q^{-2}-1)
-q^{2(\lambda+\rho-\beta|\omega_{i-1}-\omega_{i})}(q^2-1)\right)\\
&=& \frac{1}{(q-q^{-1})}\left(q^{-2+2(\lambda+\rho-\beta|\omega_i-\omega_{i+1})}-q^{2+2(\lambda+\rho-\beta|\omega_{i-1}-\omega_{i})}\right)\\
&=&q^{(\lambda+\rho-\beta|\omega_{i-1}-\omega_{i+1})} \,  \Big[(\lambda-\beta|\al_i)\Big]
\end{eqnarray*}
by using $\al_i=2\omega_i-\omega_{i+1}-\omega_{i-1}$.
\end{proof}

This prompts the following:
\begin{defn}\label{taupathdef}
We define the weights
\begin{equation}\label{defwi}
v^{(i)}(\beta):=q^{\tau_i(\beta)}\, v_q(\beta)
\end{equation}
where 
\begin{equation}\label{deftaui}
\tau_i(\beta)=(\lambda+\rho-\beta|\omega_{i+1}-\omega_{i-1})
\end{equation}
with the property:
\begin{equation}\label{precur}
v^{(i)}(\beta+\al_i)-v^{(i)}(\beta)= \Big[(\lambda-\beta|\al_i)\Big]
\end{equation}
\end{defn}
In components, we have:
$$\tau_i(\beta)=\gamma_{i+1}-\gamma_{i-1}+\beta_{i-1}-\beta_{i+1}$$
with $\gamma_i$ as in \eqref{gammadef}.


In the following we will consider the quantum path model for $A_r$ defined as follows.
\begin{defn}\label{qpathdef}
The quantum path model for $A_r$ is the edge-weight path model on $Q_+$ of Definition \ref{edgp}, with the edge weights:
\begin{equation}\label{qw}
v(p-\al_i,p)=v^{(i)}(p) \qquad (p\in Q_+^*) 
\end{equation}
where $v^{(i)}$ are as in (\ref{defwi}-\ref{deftaui}). Each path $\bp=(p_0,...,p_N)$ on $Q_+$ receives a weight
\begin{equation}\label{qx}
x_q(\bp):=\prod_{k=1}^N \frac{1}{v^{(i_k)}(p_k)}
\end{equation}
where $p_k-p_{k-1}=\al_{i_k}$ for $k=1,2,...,N$. The corresponding partition function $Z_\beta$ satisfies the recursion relation:
\begin{equation}\label{qrecuZ} v_q(\beta)\, Z_\beta=\sum_{i=1}^r q^{-\tau_i(\beta)}\, Z_{\beta-\al_i}\end{equation}
\end{defn}

The above definitions guarantee the following cancellation lemma.

\begin{lemma}\label{cancelem}
The weights $v^{(i)}$ above satisfy the following general cancellation identity:
\begin{equation}
\frac{v^{(i)}(\beta+\al_i)}{v^{(j)}(\beta+\al_i)} \, 
\frac{v^{(j)}(\beta)}{v^{(i)}(\beta)}=q^{(i-j)C_{j,i}}
\end{equation}
for any $\beta\in Q_+$, and any $i,j\in [1,r]$.
\end{lemma}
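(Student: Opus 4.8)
The plan is to reduce the identity to a purely elementary computation of $q$-exponents by exploiting the factorized form $v^{(i)}(\beta)=q^{\tau_i(\beta)}v_q(\beta)$ from Definition \ref{taupathdef}. First I would substitute this into the left-hand side. In the first ratio both numerator and denominator are evaluated at $\beta+\al_i$, so the common factor $v_q(\beta+\al_i)$ cancels; likewise in the second ratio the common factor $v_q(\beta)$ cancels. Thus the entire dependence on the ``global'' weight $v_q$ disappears, and the product collapses to $q^{E}$ with
\[
E=\big(\tau_i(\beta+\al_i)-\tau_j(\beta+\al_i)\big)+\big(\tau_j(\beta)-\tau_i(\beta)\big).
\]

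Next I would rearrange $E$ into the telescoping form $E=\big(\tau_i(\beta+\al_i)-\tau_i(\beta)\big)-\big(\tau_j(\beta+\al_i)-\tau_j(\beta)\big)$ and use the crucial fact that $\tau_k(\beta)=(\lambda+\rho-\beta\,|\,\omega_{k+1}-\omega_{k-1})$ is \emph{affine-linear} in $\beta$. Consequently each bracket is independent of $\beta$: one has $\tau_k(\beta+\al_i)-\tau_k(\beta)=(-\al_i\,|\,\omega_{k+1}-\omega_{k-1})$. This eliminates all $\beta$-dependence, so $E$ is a constant depending only on the pair $(i,j)$, exactly as the statement demands.

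The heart of the matter is then the evaluation of these constants. Since $\g=A_r$ has $d_i=1$, the simple roots coincide with their coroots and the fundamental weights are dual to the simple roots, giving $(\al_i\,|\,\omega_m)=\delta_{i,m}$. Hence the $k=i$ bracket vanishes, $(\al_i|\omega_{i+1}-\omega_{i-1})=\delta_{i,i+1}-\delta_{i,i-1}=0$, while the $k=j$ bracket equals $\delta_{i,j+1}-\delta_{i,j-1}$. To finish I would identify this with $(i-j)C_{j,i}$: writing $\al_i=2\omega_i-\omega_{i+1}-\omega_{i-1}$ yields $C_{j,i}=(\al_i|\al_j^\vee)=2\delta_{i,j}-\delta_{i,j+1}-\delta_{i,j-1}$, so that $(i-j)C_{j,i}$ is supported precisely on the adjacent nodes $i=j\pm1$, where it matches $E$. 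The off-diagonal factor $(i-j)$, which at first sight is absent from $E$, is in fact encoded in the antisymmetric ``neighbor difference'' $\omega_{k+1}-\omega_{k-1}$ built into $\tau_k$: it is what distinguishes a step to the right ($i=j+1$) from a step to the left ($i=j-1$) and supplies the correct sign.

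I expect the only real obstacle to be the sign and index bookkeeping in this last identification, in particular keeping the orientation conventions of $\tau_i$ consistent with the entry $C_{j,i}$ (as opposed to $C_{i,j}$) and with the per-step factor $q^{(i-i_\ell)C_{i_\ell,i}}$ arising from the commutation relation $\oE_i\,\of_j=\delta_{i,j}\tfrac{K_i-K_i^{-1}}{q-q^{-1}}+q^{(i-j)C_{j,i}}\of_j\,\oE_i$; once the convention is pinned down, the computation is immediate. This cancellation is precisely the local identity needed to make the product weight $x_q(\bp)$ telescope into a solution of the recursion \eqref{overdet}, which is the role the lemma plays in the sequel.
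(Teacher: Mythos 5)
Your proposal is essentially the paper's own proof: cancel the common factor $v_q(\cdot)$ from each ratio, reduce the claim to the exponent identity $\tau_i(\beta+\al_i)-\tau_i(\beta)+\tau_j(\beta)-\tau_j(\beta+\al_i)=(i-j)C_{j,i}$, observe that the $k=i$ bracket vanishes because $\tau_i$ is independent of $\beta_i$ (equivalently $(\al_i|\omega_{i+1}-\omega_{i-1})=0$), and evaluate the surviving pairing using $(\al_i|\omega_m)=\delta_{i,m}$ for $A_r$. The one caveat concerns exactly the sign bookkeeping you flagged as the danger spot. With your (correct) formula $\tau_k(\beta+\al_i)-\tau_k(\beta)=(-\al_i\,|\,\omega_{k+1}-\omega_{k-1})$, the $k=j$ bracket equals $\delta_{i,j-1}-\delta_{i,j+1}$, not $\delta_{i,j+1}-\delta_{i,j-1}$ as you wrote; carrying this through, the exponent of the ratio as displayed in the lemma is $E=\delta_{i,j+1}-\delta_{i,j-1}=(j-i)C_{j,i}$, the \emph{negative} of the stated right-hand side. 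This is not a defect of your method but of the printed statement: on $A_2$ with $i=1$, $j=2$ and the weights of Example \ref{sl3weights}, the displayed product equals $q^{-1}$ while $(i-j)C_{j,i}=+1$. The paper's own proof commits the compensating slip $(\al_i|\omega_{j+1}-\omega_{j-1})=(i-j)C_{j,i}$ in its final equality, so your sign error exactly mirrors the paper's. The identity actually needed and used in the proof of Theorem \ref{construcq} is the reciprocal one,
\[
\frac{v^{(j)}(\beta+\al_i)}{v^{(i)}(\beta+\al_i)}\,\frac{v^{(i)}(\beta)}{v^{(j)}(\beta)}=q^{(i-j)C_{j,i}},
\]
which your computation, with the signs kept straight, does establish.
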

\begin{proof}
Due to the form of the weights $v^{(i)}$ the statement of the lemma reduces to the identity
\begin{equation}
\tau_i(\beta+\al_i)-\tau_i(\beta)+\tau_j(\beta)-\tau_j(\beta+\al_i)=(i-j)C_{j,i}
\end{equation}
Noting that
\begin{equation}\label{usefultau}\tau_i(\beta)=\tau_i(\beta+\al_i)
\end{equation}
as $\tau_i(\beta)$ is independent of $\beta_i$, we easily compute:
\begin{equation*}
\tau_i(\beta+\al_i)-\tau_i(\beta)+\tau_j(\beta)-\tau_j(\beta+\al_i)=\tau_j(\beta)-\tau_j(\beta+\al_i)
=(\al_i|\omega_{j+1}-\omega_{j-1})=(i-j)C_{j,i}
\end{equation*}
\end{proof}

We are now ready to state the main result of this section.
\begin{thm}\label{construcq}
The Whittaker vector $|w\rangle$ of $U_q(A_r)$ 
for the h.w.r. $V_{\lambda,v}$, characters $(K_i)^{i-1}$ and eigenvalues $\mu=(\mu_1,...,\mu_r)$
is the generating series for the partition functions of the quantum path model for $A_r$ of Definition \ref{qpathdef}, namely:
\begin{equation}\label{ansa}
|w\rangle = \sum_{\beta\in Q_+}{\boldsymbol \mu}^\beta \,  |Z_{\beta}\rangle 
\end{equation}
with $|Z_{\beta}\rangle$ as in \eqref{parvec}, with the weights 
$y(\bp)= x_q(\bp)$ of \eqref{qx}.
\end{thm}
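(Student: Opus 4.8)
The plan is to mirror the proof of Theorem \ref{construc}: reduce the statement to a single local cancellation and verify that the choice $y(\bp)=x_q(\bp)$ solves the sufficient system \eqref{overdet}. The normalization $\langle\lambda|w\rangle=1$ is then immediate, since the coefficient of $|\lambda\rangle$ in \eqref{ansa} is $x_q(\mathbf 0)=1$, and uniqueness of the Whittaker vector follows from the defining conditions exactly as in the classical case. So the whole theorem rests on checking \eqref{overdet} for $x_q$.

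First I would use the difference equation \eqref{precur} to rewrite the $q$-number in \eqref{overdet} as a genuine finite difference of edge weights,
$$\big[(\lambda-p_k|\al_i)\big]=v^{(i)}(p_k+\al_i)-v^{(i)}(p_k).$$
Substituting this together with the augmented weight (read off from \eqref{augmented} and the edge-weight rule \eqref{qx})
$$x_q(\bp_{k,i})=\prod_{j=1}^k\frac{1}{v^{(i_j)}(p_j)}\;\frac{1}{v^{(i)}(p_k+\al_i)}\prod_{j=k+1}^N\frac{1}{v^{(i_j)}(p_j+\al_i)},$$
turns the right-hand side of \eqref{overdet} into $\sum_{k=0}^N q^{\sigma_k}\big(P_k-R_k\big)$, where $\sigma_k=\sum_{\ell=k+1}^N(i-i_\ell)C_{i_\ell,i}$ is the commutation exponent, $P_k=\prod_{j=1}^k v^{(i_j)}(p_j)^{-1}\prod_{j=k+1}^N v^{(i_j)}(p_j+\al_i)^{-1}$ is the product left after the inserted weight $v^{(i)}(p_k+\al_i)$ cancels, and $R_k=\tfrac{v^{(i)}(p_k)}{v^{(i)}(p_k+\al_i)}P_k$.

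The heart of the argument is to show that this sum telescopes to $x_q(\bp)$, the negative piece $q^{\sigma_k}R_k$ at index $k$ cancelling the positive piece $q^{\sigma_{k-1}}P_{k-1}$ at index $k-1$. Unlike the classical case this cancellation is not manifest: the edge entering $p_k$ in the original path carries $v^{(i_k)}(p_k)$ while the inserted step carries $v^{(i)}(\cdot)$, so $q^{\sigma_k}R_k$ and $q^{\sigma_{k-1}}P_{k-1}$ differ both by a ratio of edge weights with mismatched superscripts and by the factor $q^{\sigma_{k-1}-\sigma_k}=q^{(i-i_k)C_{i_k,i}}$. This is exactly where the cancellation Lemma \ref{cancelem} enters: applied with $j=i_k$ and $\beta=p_k$, it equates the mismatched ratio of $v^{(i)}$'s and $v^{(i_k)}$'s with precisely the required power of $q$, so the weight mismatch generated by using \emph{edge} rather than vertex weights is compensated step by step by the commutation factors $q^{\sigma_k}$ inherited from the $\oE_i\of_j$ relation. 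I expect this $q$-bookkeeping — confirming that the $q$-powers produced by reshuffling the path weights are exactly those dictated by Lemma \ref{cancelem} — to be the one genuinely new ingredient relative to Theorem \ref{construc}, and hence the main obstacle.

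Finally I would handle the two boundary ends of the telescope. The lower end, the negative term at $k=0$, carries the factor $v^{(i)}(p_0)=v^{(i)}(0)=q^{\tau_i(0)}v_q(0)=0$, since $v_q(0)=0$ by \eqref{qweightN}; the upper end collapses to $q^{\sigma_N}P_N=\prod_{j=1}^N v^{(i_j)}(p_j)^{-1}=x_q(\bp)$, because $\sigma_N$ is an empty sum. This verifies \eqref{overdet} for $y(\bp)=x_q(\bp)$ and, with the normalization and uniqueness noted above, establishes the theorem.
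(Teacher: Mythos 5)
Your proposal is correct and follows essentially the same route as the paper's proof: reduce to the sufficient system \eqref{overdet}, convert the $q$-bracket into the difference $v^{(i)}(p_k+\al_i)-v^{(i)}(p_k)$ via \eqref{precur}, and telescope, with the interior cancellation between the positive piece at $k-1$ and the negative piece at $k$ supplied by Lemma \ref{cancelem} (the paper states this pairing with indices shifted by one, taking $\beta=p_{k+1}$, $j=i_{k+1}$, which is the same identity) and the boundary terms handled by $v^{(i)}(0)=0$ and $\sigma_N=0$. Your identification of the $q$-bookkeeping between the commutation exponents $q^{\sigma_k}$ and the edge-weight mismatch as the one new ingredient is exactly where the paper's argument also concentrates.
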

\begin{proof}
We must check that the weights $x_q(\bp)=\prod_{i=1}^N \frac{1}{v^{(i_j)}(p_j)}$, $N$ the length of $\bp$, obey the system \eqref{overdet}.
For $\bp$ of length $N$, $\bp_{k,i}$ has length $N+1$ and weight:
$$x_q(\bp_{k,i}):=\left(\prod_{\ell=1}^k \frac{1}{v^{(i_\ell)}(p_\ell)}\right)\, \frac{1}{v^{(i)}(p_k+\al_i)}\, 
\left(\prod_{\ell=k+1}^N\frac{1}{v^{(i_\ell)}(p_\ell+\al_i)}\right)$$
By Lemma \ref{wdif} and the subsequent Definition \ref{taupathdef}, we may rewrite for any fixed path 
$\bp=(p_0=0,p_1,p_2,...,p_N=\beta)$:
\begin{eqnarray*}
&&\!\!\!\!\!\!\!\!\!\!\! \sum_{k=0}^N q^{\sum_{\ell=k+1}^N (i-i_\ell)C_{i_\ell,i}}\big[(\lambda-p_k | \al_i)\big]\, x_q(\bp_{k,i})\\
&&=
\sum_{k=0}^N  q^{\sum_{\ell=k+1}^N (i-i_\ell)C_{i_\ell,i}}(v^{(i)}(p_k+\al_i)-v^{(i)}(p_k)) \, x_q(\bp_{k,i})\\
&&=-\underbrace{v^{(i)}(p_0)q^{\sum_{\ell=1}^N (i-i_\ell)C_{i_\ell,i}}\prod_{j=1}^N \frac{1}{v^{(i_j)}(p_j+\al_i)}}_\text{$=0$}\\
&&
+\sum_{k=0}^N q^{\sum_{\ell=k+2}^N (i-i_\ell)C_{i_\ell,i}}\prod_{m=1}^{k}\frac{1}{v^{(i_m)}(p_m)} 
\prod_{m=k+2}^N\frac{1}{v^{(i_m)}(p_m+\al_i)} \times \\
 &&  \times \underbrace{\left(q^{(i-i_{k+1})C_{i_{k+1},i}}\frac{v^{(i)}(p_k+\al_i)}{v^{(i)}(p_k+\al_i)v^{(i_{k+1})}(p_{k+1}+\al_i)} -
 \frac{v^{(i)}(p_{k+1})}{v^{(i_{k+1})}(p_{k+1})v^{(i)}(p_{k+1}+\al_i)} \right)}_\text{$=0$ by Lemma \ref{cancelem}}\\
 &&+ \left(\prod_{m=1}^{N} \frac{1}{v^{(i_m)}(p_m)}\right)\, \frac{v^{(i)}(p_N+\epsilon_i)}{v^{(i)}(p_N+\epsilon_i)}\\ 
&&=x_q(\bp)
\end{eqnarray*}
which is nothing but the equation \eqref{overdet}.
In the above, we have used $v^{(i)}(p_0)=0$ as $p_0=0$ and
the cancellation Lemma \ref{cancelem} for $\beta=p_{k+1}$ and $j=i_{k+1}$ to eliminate all terms but the last one.
We may picture the cancellation above as occurring between the two paths $\bp_{k,i}$ and $\bp_{k+1,i}$ 
that differ only by the order in which the two steps $\al_i,\al_{j}$, $j=i_{k+1}$,  are taken from $p_k$:
$$\raise -.5cm \hbox{\epsfxsize=4.cm \epsfbox{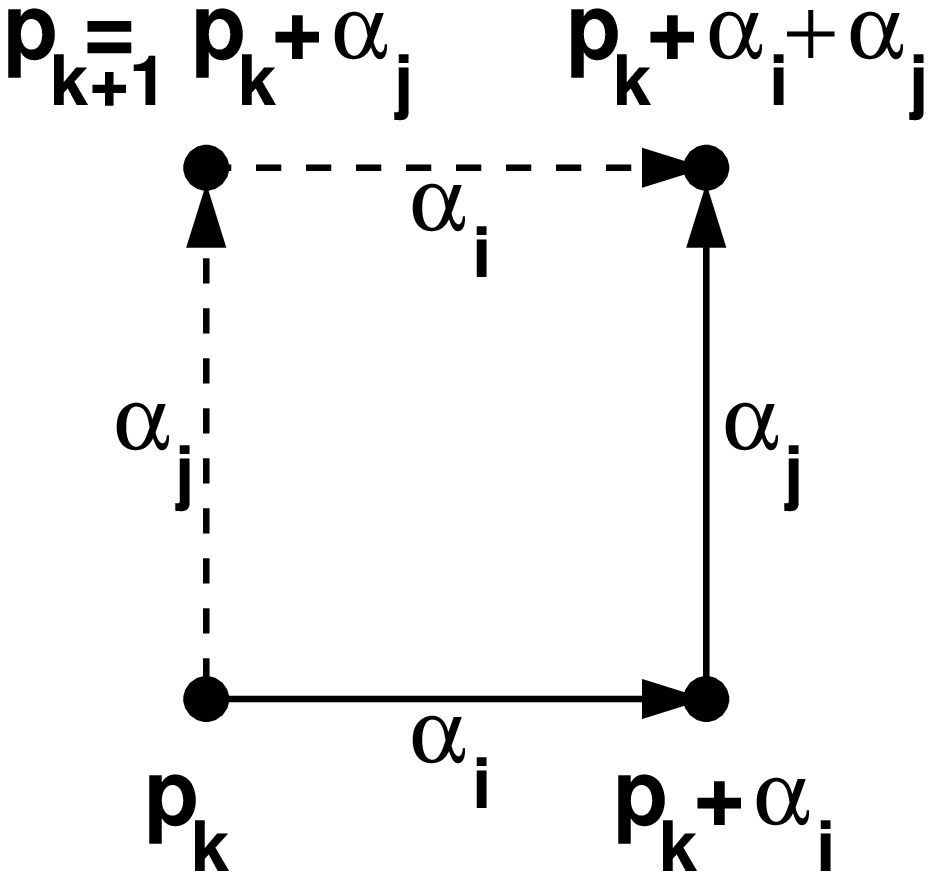} }  $$
\end{proof}

\begin{example}
For $G=A_1$, we have $C=(2)$, $\gamma_1=\frac{\lambda_1+1}{2}$ and:
\begin{equation}v^{(1)}(\beta)=v_q(\beta)=\frac{q^{-\lambda_1-1}(1-q^{2\beta})+q^{\lambda_1+1}(1-q^{-2\beta})}{(q-q^{-1})^2}
=[\beta][\lambda_1-\beta+1] 
\end{equation}
and the q-Whittaker vector reads:
\begin{equation}
|w\rangle =\sum_{\beta\in \Z_+} \frac{\mu^\beta}{\prod_{j=1}^\beta [j][\lambda-j+1]} F^\beta \, |v\rangle 
\end{equation}
as ${\of}=F$.
\end{example}

\begin{example}\label{sl3weights}
For $G=A_2$,  we have $C=\begin{pmatrix}2 & -1\\ -1 & 2\end{pmatrix}$, 
$\gamma_1=1+\frac{2\lambda_1+\lambda_2}{3}$, $\gamma_2=1+\frac{\lambda_1+2\lambda_2}{3}$ and:
\begin{eqnarray*}v_q(\beta_1,\beta_2)&=&
\frac{q^{-2\gamma_1}(1-q^{2\beta_1})+q^{2(\gamma_1-\gamma_2)}(1-q^{2(\beta_2-\beta_1)})
+q^{2\gamma_2}(1-q^{-2\beta_2})}{(q-q^{-1})^2}\\
&=&[\beta_2][\lambda_1 +\lambda_2+2-\beta_1] q^{\beta_1-\beta_2-\gamma_1+\gamma_2} 
+[\beta_1-\beta_2][\lambda_1+1-\beta_1+\beta_2] q^{-\gamma_2}
\end{eqnarray*}
while
\begin{equation*}v^{(1)}(\beta_1,\beta_2)=q^{\gamma_2-\beta_2} \, v_q(\beta_1,\beta_2)\ ,\qquad 
v^{(2)}(\beta_1,\beta_2)=q^{\beta_1-\gamma_1} \, v_q(\beta_1,\beta_2)
\end{equation*}
\end{example}

\subsection{q-Whittaker function and q-difference equations}

Whittaker functions are defined as scalar products of Whittaker vectors with the insertion of a function of the Cartan generators.
The dual Whittaker vector $\langle w|\equiv {}_\lambda^{\mu'}\!\!\langle w\vert$ is determined uniquely by the conditions that 
$\langle w| \in \langle \lambda| U(\{E_i\})$, $\langle \lambda|$ the left highest weight vector such that $\langle \lambda| F_i=0$ and 
$\langle \lambda| K_i=q^{\lambda_i}\,\langle \lambda|$, and $\langle w| F_i=\mu_i'\, \langle w| (K_i)^{-(i-1)}$ for all $i\in [1,r]$, or equivalently
$\langle w| {\of}_i=\mu_i'\, \langle w|$. 
The paths $\langle \bp\vert$ are obtained by acting on the left highest weight vector $\langle \lambda\vert$
by ${\oE_i}$, hence for a path $\bp=(p_0,p_1,...,p_N)$ with steps $p_k-p_{k-1}=\al_{i_k}$, we have:
$$ \langle \bp\vert =\langle \lambda\vert {\oE}_{i_1}{\oE}_{i_2}\cdots  {\oE}_{i_N} $$
To construct $\langle w|$, we use the anti-automorphism of the quantum algebra:
$$ E_i\to F_i,\quad F_i\to E_i,\quad K_i\to K_i^{-1},\quad q\to q^{-1} $$
which also maps ${\of}_i\to {\oE}_i$ and ${\oE}_i\to {\of}_i$.
We find:
\begin{equation}\label{dualvec}
{}_\lambda^{\mu'}\!\!\langle {w}|= 
\sum_{\beta \in Q_+} {\boldsymbol \mu'}^{\beta} \sum_{\bp\in {\mathcal P}_\beta} x_{q^{-1}}(\bp)\, \langle \bp|
\end{equation}
with $x_q(\bp)$ as in \eqref{qx}.

\begin{defn}
Let $\phi=\sum_{i=1}^r\phi_i \al_i$.
The q-Whittaker function associated to the representation $V_{\lambda}$, with right/left eigenvalues 
$\mu=(\mu_1,...,\mu_r)$ and $\mu'=(\mu_1',...,\mu_r')$
is defined by:
\begin{equation}\label{qwitdef}
W_{\lambda}^{\mu,\mu'} (\phi;q)={}_\lambda^{\mu'}\!\!\langle {w}| \prod_{i=1}^r (K_i)^{-\phi_i} |w\rangle_{\lambda}^\mu
\end{equation}
with ${}_\lambda^{\mu'}\!\!\langle {w}|$ as in \eqref{dualvec}.
\end{defn}

We have the following simple expression for the q-Whittaker function.

\begin{thm}\label{qcompW}
\begin{equation}
W_{\lambda}^{\mu,\mu'} (\phi;q)
=\sum_{\beta\in Q_+} {\boldsymbol \mu}^\beta {\boldsymbol \mu'}^\beta q^{-(\lambda-\beta|\phi)}
{\tilde Z}_{\beta}
\end{equation}
where ${\tilde Z}_{\beta}$ denotes the partition function for the edge-weight path model with weights
\begin{equation}\label{neweights}
{\tilde v}^{(i)}(\beta)= q^{-\tau_i(\beta)} v_{q^{-1}}(\beta) 
\end{equation}
\end{thm}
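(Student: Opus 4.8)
The plan is to mirror the proof of Theorem \ref{witpa} for the classical Whittaker function, since each ingredient there has a direct quantum counterpart. First I would compute the overlap $\Sigma_\bp:=\langle \bp | w\rangle$ between a left path bra and the Whittaker vector. Writing $\langle\bp| =\langle\lambda| \oE_{i_1}\oE_{i_2}\cdots\oE_{i_N}$ for a path $\bp\in\mathcal P_\beta$ with steps $\al_{i_1},\ldots,\al_{i_N}$, and applying the defining relation $\oE_i|w\rangle=\mu_i|w\rangle$ from \eqref{newqwit} successively, one obtains
\begin{equation*}
\Sigma_\bp=\Big(\prod_{\ell=1}^N \mu_{i_\ell}\Big)\langle\lambda|w\rangle=\boldsymbol{\mu}^\beta,
\end{equation*}
which depends only on the endpoint $\beta$ (through the multiplicities $\beta_i$ of each step) and not on the particular path. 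This is the exact analogue of the computation $\Sigma_\bp=\prod_i\mu_i^{\beta_i}$ used in the classical case.

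Next I would handle the Cartan insertion $\prod_{i=1}^r(K_i)^{-\phi_i}$. Expanding the dual vector by \eqref{dualvec}, each graded piece $\sum_{\bp\in\mathcal P_\beta}x_{q^{-1}}(\bp)\langle\bp|$ lies in the weight-$(\lambda-\beta)$ component of $V_\lambda^*$, on which each $K_i$ acts diagonally from the right by $q^{(\lambda-\beta|\al_i^\vee)}$. Hence $\prod_i(K_i)^{-\phi_i}$ contributes the scalar $q^{-\sum_i\phi_i(\lambda-\beta|\al_i^\vee)}$. Here I would invoke that for $\g=A_r$ one has $d_i=1$, so $\al_i=\al_i^\vee$ and, with $\phi=\sum_i\phi_i\al_i$, the sum collapses to $\sum_i\phi_i(\lambda-\beta|\al_i^\vee)=(\lambda-\beta|\phi)$, producing exactly the prefactor $q^{-(\lambda-\beta|\phi)}$ appearing in the statement.

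The remaining point is to identify $\sum_{\bp\in\mathcal P_\beta}x_{q^{-1}}(\bp)$ with the partition function $\tilde Z_\beta$ of the edge-weight model with weights $\tilde v^{(i)}(\beta)=q^{-\tau_i(\beta)}v_{q^{-1}}(\beta)$ of \eqref{neweights}. The key observation is that the exponent $\tau_i(\beta)=(\lambda+\rho-\beta|\omega_{i+1}-\omega_{i-1})$ of \eqref{deftaui} is a pure number independent of $q$, whereas the $q$-dependence of $v^{(i)}(\beta)=q^{\tau_i(\beta)}v_q(\beta)$ from \eqref{defwi} resides in the explicit power $q^{\tau_i(\beta)}$ and in $v_q(\beta)$. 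Consequently the substitution $q\to q^{-1}$ sends $v^{(i)}(\beta)$ to $q^{-\tau_i(\beta)}v_{q^{-1}}(\beta)=\tilde v^{(i)}(\beta)$, so $x_{q^{-1}}(\bp)$ is literally the Boltzmann weight \eqref{qx} of the $\tilde v^{(i)}$-model, and therefore $\sum_{\bp\in\mathcal P_\beta}x_{q^{-1}}(\bp)=\tilde Z_\beta$.

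Assembling the three steps, the weight grading collapses the double sum to a single sum over $\beta$, the insertion factors out as $q^{-(\lambda-\beta|\phi)}$ on each graded piece, and $\Sigma_\bp=\boldsymbol{\mu}^\beta$ yields
\begin{equation*}
W_\lambda^{\mu,\mu'}(\phi;q)=\sum_{\beta\in Q_+}\boldsymbol{\mu}^\beta{\boldsymbol{\mu'}}^{\beta}\,q^{-(\lambda-\beta|\phi)}\sum_{\bp\in\mathcal P_\beta}x_{q^{-1}}(\bp)=\sum_{\beta\in Q_+}\boldsymbol{\mu}^\beta{\boldsymbol{\mu'}}^{\beta}\,q^{-(\lambda-\beta|\phi)}\tilde Z_\beta,
\end{equation*}
as claimed. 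I expect the only genuinely delicate step to be the last one: one must check that the dual construction through the anti-automorphism $q\to q^{-1}$ produces the \emph{twisted} edge weight $q^{-\tau_i}v_{q^{-1}}$ rather than a naive $v_{q^{-1}}$, i.e. that the $\tau_i$-exponent is inverted while the combinatorial structure of the paths is untouched. Everything else reduces to the weight-grading bookkeeping already used in Theorem \ref{witpa}.
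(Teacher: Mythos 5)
Your proposal is correct and follows essentially the same route as the paper: compute $\Sigma_\bp=\langle\bp|w\rangle=\boldsymbol{\mu}^\beta$ from the defining relations $\oE_i|w\rangle=\mu_i|w\rangle$, extract the factor $q^{-(\lambda-\beta|\phi)}$ from the Cartan insertion on the weight-$(\lambda-\beta)$ graded piece, and identify $\sum_\bp x_{q^{-1}}(\bp)$ with $\tilde Z_\beta$. You merely spell out two steps the paper leaves implicit (the $\al_i=\al_i^\vee$ bookkeeping for $A_r$ and the fact that $q\to q^{-1}$ turns $v^{(i)}=q^{\tau_i}v_q$ into $\tilde v^{(i)}=q^{-\tau_i}v_{q^{-1}}$ because $\tau_i(\beta)$ is $q$-independent), both of which are accurate.
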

\begin{proof}
For a given path $\bp=(p_0,...,p_N)$ from the origin $p_0=0$ to $p_N=\beta$, let us first compute the scalar product:
$\Sigma_\bp=\langle \bp|w\rangle$.  Writing $p_k-p_{k-1}=\al_{i_k}$ for $k=1,2,...,N$, we have
$$\Sigma_\bp=\langle \lambda| \prod_{\ell=1}^N {\oE}_{i_\ell} |w\rangle =\prod_{\ell=1}^N \mu_{i_\ell}={\boldsymbol \mu}^\beta$$
where we have iteratively used the Whittaker vector defining conditions ${\oE}_i |w\rangle=\mu_i|w\rangle$.
This allows us to write:
\begin{eqnarray*}W_{\lambda}^{\mu,\mu'}(\phi;q)&=&\sum_{\beta\in Q_+}{\boldsymbol \mu'}^\beta
q^{-(\lambda-\beta|\phi)} \sum_{\bp\in {\mathcal P}_\beta} x_{q^{-1}}(\bp)\, \langle \bp| w\rangle_{\lambda}^{\mu}
\\
&=&\sum_{\beta\in Q_+} {\boldsymbol \mu'}^\beta
q^{-(\lambda-\beta|\phi)} \sum_{\bp\in {\mathcal P}_\beta} x_{q^{-1}}(\bp)\, \Sigma_\bp
=\sum_{\beta\in Q_+} {\boldsymbol \mu}^\beta {\boldsymbol \mu'}^\beta
q^{-(\lambda-\beta|\phi)} \sum_{\bp\in {\mathcal P}_\beta} x_{q^{-1}}(\bp)
\end{eqnarray*}
and the theorem follows.
\end{proof}

\begin{cor}\label{barinv}
The partition function $Z_\beta$ for quantum $A_r$ paths is a so-called ``bar-invariant" quantity,
i.e. it is invariant under the transformation $q\to q^{-1}$: ${\tilde Z}_\beta=Z_\beta$.
\end{cor}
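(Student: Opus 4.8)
The plan is to compute the $q$-Whittaker function $W_{\lambda}^{\mu,\mu'}(\phi;q)$ of \eqref{qwitdef} in two different ways and compare the results. Theorem \ref{qcompW} already supplies one expansion: there one writes out the dual vector ${}_\lambda^{\mu'}\!\langle w|$ via \eqref{dualvec}, whose path weights are $x_{q^{-1}}(\bp)$, and contracts against $|w\rangle$ using $\overline{E}_i|w\rangle=\mu_i|w\rangle$, which produces $\Sigma_\bp=\langle\bp|w\rangle=\boldsymbol\mu^\beta$ and hence $W=\sum_\beta\boldsymbol\mu^\beta\boldsymbol\mu'^\beta q^{-(\lambda-\beta|\phi)}\tilde Z_\beta$, where $\tilde Z_\beta=\sum_{\bp\in{\mathcal P}_\beta}x_{q^{-1}}(\bp)$.

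First I would carry out the symmetric computation, contracting the Cartan insertion against the other vector. Expanding the right vector $|w\rangle=\sum_\beta\boldsymbol\mu^\beta\sum_{\bp\in{\mathcal P}_\beta}x_q(\bp)|\bp\rangle$ from \eqref{ansa}, and using that $|\bp\rangle$ has weight $\lambda-\beta$ so that $\prod_i K_i^{-\phi_i}|\bp\rangle=q^{-(\lambda-\beta|\phi)}|\bp\rangle$, gives
\[
W_{\lambda}^{\mu,\mu'}(\phi;q)=\sum_{\beta\in Q_+}\boldsymbol\mu^\beta\, q^{-(\lambda-\beta|\phi)}\sum_{\bp\in{\mathcal P}_\beta}x_q(\bp)\;{}_\lambda^{\mu'}\!\langle w|\bp\rangle .
\]
The scalar product ${}_\lambda^{\mu'}\!\langle w|\bp\rangle$ is then evaluated from the dual defining relation $\langle w|\overline{F}_i=\mu_i'\langle w|$ together with $|\bp\rangle=\overline{F}_{i_N}\cdots\overline{F}_{i_1}|v\rangle$ and the normalization $\langle w|v\rangle=1$, yielding ${}_\lambda^{\mu'}\!\langle w|\bp\rangle=\boldsymbol\mu'^\beta$ for every $\bp\in{\mathcal P}_\beta$. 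Hence this second route produces $W=\sum_\beta\boldsymbol\mu^\beta\boldsymbol\mu'^\beta q^{-(\lambda-\beta|\phi)}Z_\beta$, with the untilded partition function $Z_\beta=\sum_{\bp\in{\mathcal P}_\beta}x_q(\bp)$.

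Next I would equate the two expansions of the single function $W_{\lambda}^{\mu,\mu'}(\phi;q)$. Treating the $\mu_i,\mu_i'$ as independent formal parameters, the monomial $\boldsymbol\mu^\beta\boldsymbol\mu'^\beta=\prod_i\mu_i^{\beta_i}(\mu_i')^{\beta_i}$ determines $\beta\in Q_+$ uniquely through its exponents, and the accompanying factor $q^{-(\lambda-\beta|\phi)}$ is identical in the two expansions; matching coefficients therefore forces $Z_\beta=\tilde Z_\beta$ for all $\beta$.

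Finally, to recognize this as the asserted bar-invariance, I would note that $\tilde Z_\beta$ is literally the image of $Z_\beta$ under $q\mapsto q^{-1}$: the edge weight $\tilde v^{(i)}(\beta)=q^{-\tau_i(\beta)}v_{q^{-1}}(\beta)$ of \eqref{neweights} is obtained from $v^{(i)}(\beta)=q^{\tau_i(\beta)}v_q(\beta)$ by substituting $q\to q^{-1}$, so $x_{q^{-1}}(\bp)$ is the bar of $x_q(\bp)$ and $\tilde Z_\beta=\overline{Z_\beta}$. Combined with $Z_\beta=\tilde Z_\beta$ this gives $\overline{Z_\beta}=Z_\beta$. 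The only delicate point is the bookkeeping of which weight set ($x_q$ versus $x_{q^{-1}}$) attaches to which vector in each contraction, and the justification for comparing coefficients; I do not expect either to be a genuine obstacle, so the argument should go through cleanly once the two-sided evaluation of the matrix element is set up correctly.
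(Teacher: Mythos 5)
Your proposal is correct and follows essentially the same route as the paper: the printed proof of Corollary \ref{barinv} likewise evaluates the matrix element a second time by expanding the right Whittaker vector with weights $x_q(\bp)$ and contracting against $\langle w|$ via $\langle w|\overline{F}_i=\mu_i'\langle w|$, then identifies coefficients of $\boldsymbol\mu^\beta\boldsymbol\mu'^\beta$ with the expansion of Theorem \ref{qcompW}. Your closing remark that $\tilde Z_\beta$ is literally $Z_\beta|_{q\to q^{-1}}$ just makes explicit what the paper leaves implicit.
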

\begin{proof}
We use an alternative proof of Theorem \ref{qcompW} that goes as follows. We first compute the scalar product
$\Sigma_\bp'=\langle w|\bp\rangle$ in a similar way. Writing again $\bp=(p_0,...,p_N)$ and
$p_k-p_{k-1}=\al_{i_k}$ for $k=1,2,...,N$, we find:
$$\Sigma_\bp'=\langle w| \prod_{\ell=1}^N {\of}_{i_{N+1-\ell}} |\lambda\rangle =\prod_{\ell=1}^N \mu_{i_\ell}'={\boldsymbol \mu'}^\beta$$
by use of the left Whittaker defining conditions $\langle w| \prod_{\ell=1}^N {\of}_{i}=\mu_i'\langle w|$.
This allows us to write:
\begin{eqnarray*}W_{\lambda}^{\mu,\mu'}(\phi;q)&=&\sum_{\beta\in Q_+}{\boldsymbol \mu}^\beta
q^{-(\lambda-\beta|\phi)} \sum_{\bp\in {\mathcal P}_\beta} x_{q}(\bp)\, {}_\lambda^{\mu'}\langle w| \bp\rangle
=\sum_{\beta\in Q_+} {\boldsymbol \mu}^\beta
q^{-(\lambda-\beta|\phi)} \sum_{\bp\in {\mathcal P}_\beta} x_{q}(\bp)\, \Sigma_\bp'\\
&=&\sum_{\beta\in Q_+} {\boldsymbol \mu}^\beta {\boldsymbol \mu'}^\beta
q^{-(\lambda-\beta|\phi)} \sum_{\bp\in {\mathcal P}_\beta} x_{q}(\bp)
=\sum_{\beta\in Q_+} {\boldsymbol \mu}^\beta {\boldsymbol \mu'}^\beta
q^{-(\lambda-\beta|\phi)} Z_\beta
\end{eqnarray*}
Comparing with the result of Theorem \ref{qcompW}, and identifying the coefficients of ${\boldsymbol \mu}^\beta {\boldsymbol \mu'}^\beta$
in both expressions, we deduce that $Z_\beta={\tilde Z}_\beta$, and the Corollary follows.
\end{proof}

We are now ready for the final result, expressing that the q-Whittaker function obeys a q-Toda difference equation, equivalent to that of
\cite{Etingof}. 

\begin{defn}
We introduce the shift operators $S_i$, $i\in[0,r+1]$  acting on functions $f(\phi)$ as
\begin{equation} S_i\, f (\phi)=f(\phi-\omega_i) \quad (i\in [1,r]), \quad S_0=S_{r+1}=1 \end{equation}
as well as:
\begin{equation} T_i=S_{i+1}S_i^{-1}
(i\in [0,r]) \end{equation}
\end{defn}

\begin{thm}\label{todaqW}
The modified q-Whittaker function 
\begin{equation}\label{modiq}W(\phi):= q^{\sum_i \phi_i} W_{\lambda}^{\mu,\mu'} (\phi;q^{-1})=
\sum_{\beta\in Q_+} {\boldsymbol \mu}^\beta {\boldsymbol \mu'}^\beta q^{(\lambda+\rho-\beta|\phi)}
Z_{\beta}
\end{equation} 
with $Z_\beta$ as in Definition \ref{edgp} with $x(\bp)=x_q(\bp)$,
satisfies the following q-difference Toda equation:
\begin{eqnarray}
H_q W(\phi)&=&E_{q,\lambda} \, W(\phi)\label{maineqto}\\
H_q&=&\sum_{i=0}^r T_i^2 +(q-q^{-1})^2 \sum_{i=1} ^{r}\mu_i\mu_i'\, q^{-(\phi | \al_i)} T_{i-1} T_{i} \label{hamiq}\\
E_{q,\lambda}&=& \sum_{i=0}^r q^{2(\lambda+\rho|\omega_i-\omega_{i+1})}\label{evalq}
\end{eqnarray}
\end{thm}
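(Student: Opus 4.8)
The plan is to mirror the proofs of Theorems \ref{todaW} and \ref{todafW}: act with $H_q$ term-by-term on the series \eqref{modiq}, reduce the action of each $q$-difference operator on the building block $q^{(\lambda+\rho-\beta|\phi)}$ to multiplication by an explicit power of $q$, and then recognize the recursion \eqref{qrecuZ} for $Z_\beta$. Throughout I write $\nu_i=\mu_i\mu_i'$, so that the coefficient of $q^{(\lambda+\rho-\beta|\phi)}$ in \eqref{modiq} is $\nu^\beta Z_\beta$ with $\nu^\beta=\prod_i\nu_i^{\beta_i}$.

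First I would record how the shift operators act on the exponentials. Since $S_i\, q^{(\lambda+\rho-\beta|\phi)}=q^{(\lambda+\rho-\beta|\phi-\omega_i)}=q^{-(\lambda+\rho-\beta|\omega_i)}\,q^{(\lambda+\rho-\beta|\phi)}$, the operator $T_i=S_{i+1}S_i^{-1}$ multiplies $q^{(\lambda+\rho-\beta|\phi)}$ by $q^{(\lambda+\rho-\beta|\omega_i-\omega_{i+1})}$ (with $\omega_0=\omega_{r+1}=0$), and because the $S_j$ commute, $T_{i-1}T_i=S_{i+1}S_{i-1}^{-1}$ multiplies it by $q^{(\lambda+\rho-\beta|\omega_{i-1}-\omega_{i+1})}=q^{-\tau_i(\beta)}$, with $\tau_i$ as in \eqref{deftaui}. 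Consequently the diagonal part $\sum_{i=0}^r T_i^2$ multiplies $q^{(\lambda+\rho-\beta|\phi)}$ by $\sum_{i=0}^r q^{2(\lambda+\rho-\beta|\omega_i-\omega_{i+1})}$. The crucial observation is that \eqref{qweightN} is exactly a telescoping rewriting of this sum: expanding the product in \eqref{qweightN} and using $(\beta|\omega_{i+1}-\omega_i)=-(\beta|\omega_i-\omega_{i+1})$ gives $(q-q^{-1})^2 v_q(\beta)=\sum_{i=0}^r q^{2(\lambda+\rho|\omega_i-\omega_{i+1})}-\sum_{i=0}^r q^{2(\lambda+\rho-\beta|\omega_i-\omega_{i+1})}$, that is, $\sum_{i=0}^r T_i^2\,q^{(\lambda+\rho-\beta|\phi)}=\big(E_{q,\lambda}-(q-q^{-1})^2 v_q(\beta)\big)\,q^{(\lambda+\rho-\beta|\phi)}$ by \eqref{evalq}.

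For the potential part I would note that the prefactor $q^{-(\phi|\al_i)}$ is a multiplication operator acting \emph{after} the shift $T_{i-1}T_i$, so that $q^{-(\phi|\al_i)}T_{i-1}T_i\,q^{(\lambda+\rho-\beta|\phi)}=q^{-\tau_i(\beta)}\,q^{(\lambda+\rho-(\beta+\al_i)|\phi)}$; the multiplication thus shifts the weight $\beta$ by $+\al_i$ in the exponent. Assembling both pieces, $H_q W(\phi)$ equals $\sum_\beta\nu^\beta Z_\beta\big(E_{q,\lambda}-(q-q^{-1})^2 v_q(\beta)\big)q^{(\lambda+\rho-\beta|\phi)}+(q-q^{-1})^2\sum_\beta\sum_{i=1}^r\nu^\beta\nu_i Z_\beta\,q^{-\tau_i(\beta)}q^{(\lambda+\rho-(\beta+\al_i)|\phi)}$. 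Relabeling $\beta\mapsto\beta-\al_i$ in the second sum, and using $\nu^{\beta-\al_i}\nu_i=\nu^\beta$ together with $\tau_i(\beta-\al_i)=\tau_i(\beta)$ from \eqref{usefultau}, the coefficient of $q^{(\lambda+\rho-\beta|\phi)}$ becomes $\nu^\beta\big(E_{q,\lambda}Z_\beta-(q-q^{-1})^2(v_q(\beta)Z_\beta-\sum_{i=1}^r q^{-\tau_i(\beta)}Z_{\beta-\al_i})\big)$. The parenthesized difference vanishes by the recursion \eqref{qrecuZ}, leaving exactly $E_{q,\lambda}\,W(\phi)$.

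The argument is essentially mechanical once the shift action is pinned down; the main obstacle is bookkeeping rather than any genuine difficulty. Concretely, the delicate points I expect to watch are: (i) recognizing the telescoping identity that identifies $\sum_i q^{2(\lambda+\rho-\beta|\omega_i-\omega_{i+1})}$ inside \eqref{qweightN} and matching the boundary terms $\omega_0=\omega_{r+1}=0$ correctly so that $E_{q,\lambda}$ appears cleanly; and (ii) correctly ordering the multiplication operator $q^{-(\phi|\al_i)}$ relative to the shifts $T_{i-1}T_i$ so that the potential term shifts $\beta$ by $+\al_i$, together with the compensating index relabeling that feeds into \eqref{qrecuZ}. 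One should also verify that the starting form \eqref{modiq} is the correct normalization, which relies on the bar-invariance $\widetilde Z_\beta=Z_\beta$ of Corollary \ref{barinv} and on $(\rho|\phi)=\sum_i\phi_i$ for type $A_r$, since the whole reduction hinges on $\phi$ being paired against the weights in precisely this way.
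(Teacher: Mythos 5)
Your proposal is correct and follows essentially the same route as the paper's proof: compute how $S_i$, $T_i$, and $T_{i-1}T_i$ act on the exponentials $q^{(\lambda+\rho-\beta|\phi)}$, identify the diagonal part $\sum_i T_i^2$ with $E_{q,\lambda}-(q-q^{-1})^2 v_q(\beta)$ via the telescoping form of \eqref{qweightN}, and close the computation with the recursion \eqref{qrecuZ} after reindexing (the paper packages the diagonal part as an operator $K_q$ and relabels $\beta\to\beta+\al_k$ rather than $\beta\to\beta-\al_i$, but this is only cosmetic). The bookkeeping points you flag, including the use of Corollary \ref{barinv} and $(\rho|\phi)=\sum_i\phi_i$ to justify the normalization \eqref{modiq}, are handled correctly.
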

\begin{proof}
We first note that
$S_i \,q^{(\lambda+\rho-\beta|\phi)}=q^{(\beta-\lambda-\rho|\omega_i)} q^{(\lambda+\rho-\beta|\phi)}$,
hence $S_i$ acts on $W(\phi)$ by inserting $q^{(\beta-\lambda-\rho|\omega_i)}=q^{\beta_i-\gamma_i}$ 
in the expression \eqref{modiq} for $W(\phi)$ as a sum over $\beta$.
Similarly, $T_i$ acts on $W(\phi)$ by inserting 
$q^{(\lambda+\rho-\beta|\omega_{i}-\omega_{i+1})}=q^{\beta_{i+1}-\beta_i-\gamma_{i+1}+\gamma_i}$, and $T_{i-1}T_i$ by inserting 
$q^{-\tau_i(\beta)}=q^{\beta_{i-1}-\beta_{i+1}+\gamma_{i+1}-\gamma_{i-1}}$ 
in the sum \eqref{modiq}. 
As a consequence, using the expression \eqref{qweightN} for $v_q(\beta)$, we deduce that the difference operator 
$$K_q=\sum_{i=0}^r \frac{q^{2(\lambda+\rho|\omega_i-\omega_{i+1})}-T_i^2}{(q-q^{-1})^2}$$ 
acts on $W(\phi)$ by insertion of $v_q(\beta)$ in the sum \eqref{modiq}.
Using the recursion relation \eqref{qrecuZ} satisfied by $Z_\beta$, we finally compute:
\begin{eqnarray*}
K_q \, W(\phi)&=&\sum_{\beta\in Q_+} {\boldsymbol \mu}^\beta {\boldsymbol \mu'}^\beta q^{(\lambda+\rho-\beta|\phi)}\,
v_q(\beta)\, Z_{\beta}\\
&=&\sum_{\beta\in Q_+} {\boldsymbol \mu}^\beta {\boldsymbol \mu'}^\beta q^{(\lambda+\rho-\beta|\phi)}\,
\sum_{k=1}^r q^{-\tau_k(\beta)} \, {Z}_{\beta-\al_k}\\
&=& \sum_{k=1}^r\mu_k\mu_k' \sum_{\beta\in Q_+} 
{\boldsymbol \mu}^\beta {\boldsymbol \mu'}^\beta 
q^{(\lambda+\rho-\beta-\al_k|\phi)}\,
q^{-\tau_k(\beta+\al_k)} \, {Z}_{\beta}\\
&=& \left(\sum_{k=1}^r\mu_k\,\mu_k'\,q^{-(\al_k|\phi)}T_{k-1}T_k \right) \, W(\phi)
\end{eqnarray*}
and the theorem follows.
\end{proof}

\begin{remark}
The q-Toda Hamiltonian $H_q$ is Etingof's q-Toda operator \cite{Etingof}, obtained from the quantum R-matrix of $U_q(\mathfrak{sl}_{r+1})$.
\end{remark}
\subsection{Examples}
\vskip.1in
For the algebra \noindent{$U_q(\mathfrak{sl}_2)$,} the Whittaker function is
\begin{equation}\label{Wsltwoq} W(\phi)=\sum_{\beta\in \Z_+} (\mu\mu')^{a}q^{\phi(\lambda+1-2\beta)}\,
{Z}_{\beta}
\end{equation}
where 
$$ {Z}_{\beta} =\frac{1}{\prod_{j=1}^\beta [j][\lambda+1-j]} $$
Note that the latter is manifestly bar-invariant (see Corollary \ref{barinv}), as all q-numbers are.
Moreover, we have:
$S_0=S_2=1$, $S_1 f(\phi)= f(\phi-\frac{1}{2})$, $T_0^2 f(\phi)=f(\phi-1)$, $T_1^2 f(\phi)=f(\phi+1)$, $T_0T_1=I$, and:
$$H_q=T_0^2 +T_1^2 +(q-q^{-1})^2 \mu \mu'\, q^{-2\phi} ,\qquad E_{q,\lambda}= q^{\lambda+1}+q^{-\lambda-1}$$
so that, acting on functions $f$ of the variable $\phi$, we have:
$$ H_q \, f(\phi)= f(\phi+1)+f(\phi-1) +(q-q^{-1})^2 \mu \mu'\, q^{-2 \phi} f(\phi)$$
and can we check directly that 
$$ H_q \, W(\phi)= E_{q,\lambda}\, W(\phi) $$
as a consequence of the recursion relation $v_q(\beta) {\hat Z}_\beta={\hat Z}_{\beta-1}$, with 
$$v_q(\beta)=\frac{q^{\lambda+1}(1-q^{-2\beta})+q^{-\lambda-1}(1-q^{2\beta})}{(q-q^{-1})^2}.$$

\vskip.1in

{For the algebra $U_q(\mathfrak{sl}_3)$,} the Whitaker function is
$$W(\phi)=
\sum_{\beta_1,\beta_2\in \Z_+} (\mu_1\mu_1')^{\beta_1} (\mu_2\mu_2')^{\beta_2}q^{\phi_1(\lambda_1+1-2\beta_1)+\phi_2(\lambda_2+1-2\beta_2)}\,
{Z}_{\beta_1,\beta_2},$$
where ${Z}_{\beta_1,\beta_2}$ satisfies the recursion relation (see Example \ref{sl3weights}):
$$v_q(\beta_1,\beta_2)\, {Z}_{\beta_1,\beta_2}=q^{\gamma_2-\beta_2} {Z}_{\beta_1-1,\beta_2}+q^{\beta_1-\gamma_1}{Z}_{\beta_1,\beta_2-1},$$ 
which leads to the solution:
$${Z}_{\beta_1,\beta_2}=\frac{
\prod_{j=1}^{\beta_1 + \beta_2} [\lambda_1 + \lambda_2+2 - j]}{\left(\prod_{j=1}^{\beta_1}
  [j] [\lambda_1+1 - j][\lambda_1 + \lambda_2+2 - j]\right)\left(\prod_{j=1}^{\beta_2}
 [j] [\lambda_2+1 - j][\lambda_1 + \lambda_2+2 - j]\right)}.$$
This is the quantum version of the factorized Bump formula \eqref{bump}. 
Note that this expression is manifestly bar-invariant (see Corollary \ref{barinv}).
Unfortunately, not such nice factorized formula seem to exist for general $A_r$, $r\geq 3$.

The Hamiltonian $H_q$ and eigenvalue $E_{q,\lambda}$ are:
\begin{eqnarray*}H_q&=&T_0^2 +T_1^2 +T_2^2+(q-q^{-1})^2 (\mu_1 \mu_1'\, q^{\phi_2-2\phi_1}T_0T_1+\mu_2 \mu_2'\, q^{\phi_1-2\phi_2}T_1T_2) ,\\
E_{q,\lambda}&=& q^{-2\gamma_1}+q^{2(\gamma_1-\gamma_2)}+q^{2\gamma_2}.\end{eqnarray*}
where
\begin{eqnarray*}
T_0 \,f(\phi_1,\phi_2)&=& f(\phi_1-{\scriptstyle \frac{2}{3}},\phi_2-{\scriptstyle \frac{1}{3}}),\\\ 
T_1\, f(\phi_1,\phi_2)&=& f(\phi_1+{\scriptstyle \frac{1}{3}},\phi_2-{\scriptstyle \frac{1}{3}}),\\
\ T_2\,f(\phi_1,\phi_2)&=& f(\phi_1+{\scriptstyle \frac{1}{3}},\phi_2+{\scriptstyle \frac{2}{3}}).
\end{eqnarray*}

\section{Summary and discussion}

\subsection{Whittaker vectors}
In this paper, we have constructed path models based on Chevalley generators to compute the Whittaker vectors in the completion of irreducible, generic highest weight
Verma modules $V_{\lambda}$ for any finite-dimensional simple Lie algebras and affine Lie algebras, as well as the quantum algebra
$U_q(\mathfrak{sl}_{r+1})$. In all cases, we wrote a combinatorial expression for the Whittaker vectors as linear 
combinations of a generating set of $V_\lambda$
obtained by the free action of the nilpotent subalgebra $\mathfrak{n}_-$ on the highest weight vector $|\lambda\rangle$, 
naturally indexed by paths on the positive root lattice,
starting at the origin. We showed that the coefficients (path weights) in these linear combinations could be chosen in a very simple manner.
In the simple and affine Lie cases, the path weight is taken to be a product of local factors $1/v(p_k)$, 
depending only on each vertex $p_k$ visited by the path
(vertex weights). In the quantum case, the path weight is still a product, but local factors $1/v(p_{k-1},p_k)$
depend on {\it pairs} of consecutive vertices visited by the path (edge weights).

In all cases, our choice of path weights was dictated by the requirement of locality. 
It boils down to difference equations of the form
$v(\beta+\al_i)-v(\beta)= (\lambda-\beta|\al_i)$ for simple or affine cases, or $v^{(i)}(\beta+\al_i)-v^{(i)}(\beta)= [(\lambda-\beta|\al_i)]$
for the quantum case. 
For other quantum algebras $U_q(\mathfrak{g})$ the situation is more complex and will be addressed elsewhere.

\subsection{Whittaker functions}
We showed that the path model could be used to determine the corresponding Whittaker
functions.
Indeed, the recursion relation for
the path partition functions immediately yields an eigenfunction condition for the
corresponding Whittaker functions, for the Toda type second order differential (resp. difference) operator attached to the Lie 
(resp. quantum) algebra.

Solutions of the Toda equation are characterized by their asymptotic properties. The Whittaker functions we obtain using the path model are characterized by the 
fact that they have a well-defined power series expansion in the variables $e^{-\sum_j C_{i,j}\phi_j}$. Up to a factor of $e^{(\lambda+\rho|\phi)}$,
the Whittaker function remains bounded as $\phi_i\to\infty$ in the fundamental Weyl chamber $\sum_j C_{i,j}\phi_j\geq 0$. 
Such solutions are referred
to as {\it fundamental} Whittaker functions \cite{Hashizume}. 

In the case of finite dimensional $\g$, for fixed eigenvalue $E_\lambda$ of the 
Toda Hamiltonian, the images
$W_{\sigma(\lambda)}(\phi)$ of our function $W(\phi)$ under Weyl group reflections 
$\sigma: \lambda\to \sigma(\lambda):=\sigma(\lambda+\rho)-\rho$ are known to form a basis of the corresponding 
eigenspace of the Toda operator. 
Other Whittaker functions of interest are the so-called class I Whittaker functions \cite{Hashizume}, computed in principal series (unitary) representations of the real form of the Lie group, which are not highest weight modules. 
Class I Whittaker functions 
remain bounded in the reflected Weyl chambers as well, and this property allows
to decompose them into the fundamental function basis with simple coefficients \cite{Hashizume}.

The corresponding Whittaker vectors themselves cannot be computed using our path model, which is specifically tailored to work 
for highest weight modules (see the expressions of \cite{KLT} for the  $U_q(\mathfrak{sl}(2,\R))$ case).
Class I q-Whittaker functions for $U_q(\mathfrak{gl}_{n+1})$ were derived
in \cite{Lebedev,Lebedev3} for a related version of the q-Toda operator, 
obtained by acting on $H_q$ of \eqref{hamiq} with an automorphism $\tau$ of the algebra generated by $\{T_i\}_{i\in [0,r]}$ and 
$\{q^{-\sum_j C_{i,j}\phi_j}\}_{i\in [1,r]}$
(see \cite{Etingof}). In this quantum case, the regularity property becomes 
much stronger: the class I q-Whittaker functions actually vanish outside of the fundamental Weyl chamber for
integral values of the variables $n_i=\frac{1}{2}\sum_j C_{i,j}\phi_j$. We have checked for the case of $\mathfrak{sl}_2$ that the class I 
q-Whittaker function of \cite{Lebedev,Lebedev3} is a linear combination of the two fundamental q-Whittaker functions obtained by acting
on $W(n=\phi)$ of \eqref{Wsltwoq} and its Weyl group reflection by the automorphism $\tau$. We expect this property to generalize to higher rank.
Related q-difference equations and their generalizations also occur in the determination of graded characters for fusion products of special modules of the quantum algebras, for which the latter play a role analogous to that of class I q-Whittaker functions. These difference equations arise from the integrability of the so-called quantum Q-systems involving non-commutative variables that generalize characters \cite{DFKplus}.
(See also  \cite{Feigin} for similar formulas expanded in the Gelfand-Zeitlin basis, and \cite{Lebedev3} for a connection to McDonald polynomials).
It is our hope that our new formulas using path models may shed some new light on these connections.

\subsection{Integrability}

The open quantum Toda spin chain is known to be integrable in the classical case, namely there exist a set of higher order
commuting differential operators, simultaneously diagonalized by the Whittaker functions. In turn, these higher order eigenfunction equations
are related to higher degree recursion relations for the {\it same} path partition functions. Let us illustrate this in the case of $G=A_2$
(see Example \ref{sl3funex}). From the factorized form of the partition function \eqref{bump}, it is easy to write other recursion relations
for the partition function function $Z_{\beta_1,\beta_2}$. For instance, we have:
\begin{eqnarray}
\beta_1(\lambda_1+1-\beta_1)(\lambda_1+\lambda_2+2-\beta_1)Z_{\beta_1,\beta_2}
&=&(\lambda_1+\lambda_2+2-\beta_1-\beta_2)Z_{\beta_1-1,\beta_2}\label{a2one}\\
\beta_2(\lambda_2+1-\beta_2)(\lambda_1+\lambda_2+2-\beta_2)Z_{\beta_1,\beta_2}
&=&(\lambda_1+\lambda_2+2-\beta_1-\beta_2)Z_{\beta_1,\beta_2-1}\label{a2two}
\end{eqnarray}
These are not independent, as readily seen by taking their sum, equal to the original recursion relation \eqref{eqa2}
up to an overall factor of $(\lambda_1+\lambda_2+2-\beta_1-\beta_2)$. If we form the combination
$(\lambda_2+1-\beta_2)\times$\eqref{a2one}$-(\lambda_1+1-\beta_1)\times$\eqref{a2two}, we obtain:
$$(\beta_1-\beta_2)(\lambda_1+1-\beta_1)(\lambda_2+1-\beta_2)Z_{\beta_1,\beta_2}
=(\lambda_1+1-\beta_1)Z_{\beta_1,\beta_2-1}-(\lambda_2+1-\beta_2)Z_{\beta_1-1,\beta_2}$$
This leads to a third order differential operator eigenfunction equation for the Whittaker function $W(\phi)$ of the form
$H^{(3)}\,  W(\phi)=E^{(3)} W(\phi)$, with
\begin{eqnarray*}H^{(3)}&=&\frac{(D_1-D_2)(2D_1+D_2)(D_1+2D_2)}{9}-\nu_1 e^{-(2 \phi_1 - \phi_2)} 
(D_1 + 2 D_2) +\nu_2 e^{-(2 \phi_2 - \phi_1)} ( 2 D_1+D_2)\\
E^{(3)}&=&(\lambda_1-\lambda_2) E_{\lambda_1,\lambda_2}- \frac{(\lambda_1-\lambda_2)^3}{9}
=\frac{1}{9}(\lambda_1-\lambda_2)(2\lambda_1+\lambda_2+3)(\lambda_1+2\lambda_2+3)
\end{eqnarray*}
where we used the notations of Example \ref{todasl3}. The higher Hamiltonian $H^{(3)}$ is readily seen to commute with 
the Toda Hamiltonian $H$.


In the case of affine Lie algebras, we have been able to investigate the limit of critical level $k\to -h^\vee$, and shown how
the limiting Whittaker functions involve eigenfunctions of the critical affine Toda operator (related to the closed Toda spin chain) 
with a suitably transformed eigenvalue.
The critical limit is important because the affine algebra acquires an infinite-dimensional center, 
isomorphic to the (deformed) classical $W$-algebras, a phenomenon closely related to the integrability
in the form of the existence of higher order commuting differential operators, simultaneously diagonalized by Whittaker 
functions. It would be interesting to investigate the integrable structure of the non-critical case as well, by use of our path model, 
namely by looking for higher order recursion relations satisfied by our path partition functions.




\bibliographystyle{alpha}

\bibliography{refs}

\end{document}